\documentclass[ onefignum,onetabnum]{siamart251216}
\usepackage{amsmath, latexsym}
\usepackage{amssymb}
\usepackage[utf8]{inputenc}
\usepackage[english]{babel}
\usepackage{tikz, graphicx}
\usetikzlibrary{shapes.geometric, arrows}
\usetikzlibrary{positioning}
\usepackage{textpos}
\usepackage{amsfonts}
\usepackage{newlfont}
\usepackage{enumerate}
\usepackage{epsfig}
\usepackage{subfig}
\usepackage{subcaption}
\usepackage{float} 
\usepackage{multirow}
\usepackage{ntheorem}
\usepackage{mathrsfs}
\usepackage{bm}

\newsiamremark{remark}{Remark}

\def\T{T\wedge\tau_N^n}
\def\L{\mathcal{L}}

\def\I{\mathcal{I}}
\def\PP{\mathscr{P}}
\def\F{\mathcal{F}}

\def\f{\mathbf{f}}

\def\B{\mathcal{B}}
\def\BB{\mathfrak{B}}
\def\D{\mathcal{D}}

\def\Y{\mathrm{Y}}

\def\G{\mathcal{G}}
\def\R{\mathcal{R}}
\def\T{\mathcal{T}}

\def\X{\mathbb{X}}

\def\U{\mathcal{U}}
\def\X{\mathcal{X}}
\def\Y{\mathcal{Y}}

\def\PR{\mathcal{PR}}
\def\I{\mathcal{I}}

\def\N{\mathcal{N}}

\def\f{\boldsymbol{f}}
\def\g{\boldsymbol{g}}

\newcommand{\curl}{\textbf{curl}}

\ifpdf
\hypersetup{
	pdftitle={An Example Article},
	pdfauthor={h}
}
\fi

\author{Shiv Mishra\thanks{``Department of Mathematics, Indian Institute of Technology Roorkee, Roorkee 247667, India."\\
		E-mail: {shiv\_m@ma.iitr.ac.in}, {arbaz@ma.iitr.ac.in}. \\
		\textbf{Funding:} ``SM was supported by University Grants Commission (UGC), Govt. of India (File no.: 393/2023/221610069248). AK was supported by ANRF ARG-Matrics  ANRF/ARGM/2025/001949/MTR."}
	\and
	Arbaz Khan\footnotemark[1]}
\ifpdf
\hypersetup{pdftitle={New \LaTeX\ Style} ,pdfauthor={S. Mishra and A. Khan}}
\fi

\begin{document}
	\title{Structure-Preserving and Pressure-Robust  PINNs for Incompressible Oseen Problems}
\date{Date: \today}
\maketitle

\begin{abstract}
We develop a new class of physics-informed neural network approximations for the stationary Oseen equations based on stability-consistent loss constructions. In contrast to standard PINN formulations, which are typically heuristic, the proposed consistent PINN (CPINN) framework is systematically derived from the stability structure of the continuous problem. Within this setting, we introduce two fundamentally new approaches. First, we design standard CPINN formulations that exhibit clear improvements over conventional PINNs. Second, we propose pressure-robust CPINN formulations that provably eliminate the influence of gradient forces on the velocity approximation, yielding velocity errors that depend solely on the divergence-free component of the forcing and are independent of the pressure.
The framework accommodates both exactly divergence-free architectures and unconstrained velocity approximations, providing a unified treatment of these two paradigms. Using techniques from optimal recovery theory, we establish, for the first time in the PINN setting for Oseen-type problems, quantitative recovery estimates and optimal error bounds for both velocity and pressure under suitable Besov regularity assumptions. In particular, we obtain optimal rates for the velocity in $\boldsymbol{H}^1(\Omega)$ and for the pressure in $L^2(\Omega)$.
The proposed methodology introduces a pressure-robust CPINN paradigm for incompressible flows, combining structural consistency, robustness with respect to irrotational forces, and rigorous accuracy guarantees. Numerical experiments corroborate the theoretical findings and demonstrate the effectiveness of the approach.
\end{abstract}

\begin{keywords}
	Consistent Physics-informed neural networks; Oseen problem; collocation methods; optimal recovery; pressure-robust method. 
\end{keywords}

\begin{MSCcodes}
	41A25, 35Q35, 65N35, 68T07, 76D07
\end{MSCcodes}
\section{Introduction}
A fundamental model that arises in the study of viscous, incompressible flows at moderate or low Reynolds numbers is the stationary Oseen system, which can be seen as a linearization of the Navier–Stokes equations around a known convection field. 
Let  \(\Omega\subset \mathbb{R}^d, d=2,3\) be a bounded domain with Lipschitz boundary $\Gamma$. We aim to find a velocity field denoted as $\boldsymbol{u} : \Omega\to \mathbb{R}^d$ and the pressure field $p:\Omega\to \mathbb{R}$ from the following Oseen problem
\begin{align}\label{1}
	\begin{cases}
		- \nu \Delta \boldsymbol{u} + (\boldsymbol{\beta} \cdot \nabla) \boldsymbol{u} + \nabla p + \sigma \boldsymbol{u}&=\boldsymbol{f} \quad \text{in } \Omega, \\
		\hspace{3.6cm}\nabla \cdot \boldsymbol{u} &= 0\quad \text{in } \Omega, \\
		\hspace{4.1cm}\boldsymbol{u} &= \boldsymbol{g} \quad \text{on } \Gamma,
	\end{cases}
\end{align}
where the model involves the viscosity coefficient $\nu \in L^\infty(\Omega)$, the prescribed convection field $\boldsymbol{\beta} \in \left[W^{1,\infty}(\Omega)\right]^d$ and the reaction field $\sigma\in L^\infty(\Omega)$. To ensure the well‐posedness of the problem, for some positive constants $\nu_0, \nu_1$, and $\kappa$, these quantities satisfy the uniform bounds
\begin{align}
	0<\nu_0<\nu(x)<\nu_1,\quad \sigma(x)-\frac{1}{2}\nabla\cdot\boldsymbol{\beta}\geq \kappa.
\end{align}

It is well-known from \cite{badra2025oseen, girault1986finite} that, for the given assumptions on force data $ \boldsymbol{f}\in \left[{H}^{-1}(\Omega)\right]^d$ and boundary data $ \boldsymbol{g}\in \left[{H}^{1/2}(\Gamma)\right]^d$ such that
\begin{align}
	\int_{\Gamma}^{} \boldsymbol{g}\cdot \boldsymbol{n} \, ds = 0,
\end{align}
there exists a unique velocity field $\boldsymbol{u} \in \left[{H}^1(\Omega)\right]^d$ and pressure $p\in L^2_0(\Omega)$ which satisfies the model problem. Also, the solution will satisfy the following estimate 
\begin{align}\label{mainestimate}
	c\, \left(\| \boldsymbol{f}\|_{\boldsymbol{H}^{-1}(\Omega)}+\| \boldsymbol{g}\|_{ \boldsymbol{H}^{\frac{1}{2}}(\Gamma)} \right)\, \leq\| \boldsymbol{u}\|_{\boldsymbol{H}^1(\Omega)}+ \| p\|_{L^2(\Omega)}\leq C\, \left(\| \boldsymbol{f}\|_{\boldsymbol{H}^{-1}(\Omega)}+\| \boldsymbol{g}\|_{ \boldsymbol{H}^{\frac{1}{2}}(\Gamma)} \right),
\end{align}
for some constants $c,C>0$, depending only on $\Omega$.

The numerical approximation of incompressible flow equations, including~\eqref{1} has been widely studied using classical methods. Early foundational works of~\cite{girault1986finite} and later developments in~\cite{boffi2013mixed, DJ_silvester} establish the computational framework for stable velocity-pressure formulations. These advances were using classical discretizations which enforce the divergence constraint only weakly, which may lead to velocity errors that depend on the pressure. This issue has been addressed in the literature as the concept of pressure-robust numerical methods~\cite{linke2016pressure}. In particular, the works of Linke et al. in~\cite{ahmed2021pressure, john2017divergence, linke2019pressure, linke2018quasi} highlight that the velocity solution of incompressible flow problems depends only on the divergence-free component of the forcing term through the Helmholtz–Hodge decomposition, which improves the accuracy of velocity approximations in incompressible flow simulations. 

In recent years, PINNs introduced by Raissi et al.~\cite{karniadakis} have emerged as a mesh-free framework in which neural networks approximate PDE solutions while enforcing the governing equations and boundary conditions through the training loss. In application of this, PINNs have been successfully applied to a variety of problems, which include fluid dynamics and multiphysics systems~\cite{cai2021physics, cuomo2022scientific, karniadakis2021physics}. However, the theoretical understanding of PINNs remains an active area of research~\cite{gazoulis2023stability, MSMR, shin2023error, zeinhofer2024unified}. 
Recent work has therefore focused on developing rigorous theoretical foundations for neural network–based PDE solvers~\cite{lu, SYDK}. In particular, studies on the convergence and approximation properties of PINNs use tools from approximation theory and optimal recovery introduced by DeVore et al.~\cite{dahlkeoptimal, devore1989optimal, devore1993constructive, krieg2022recovery, novak2006function}. These developments have led to the concept of CPINNs in Besov regularity~\cite{BVPS,  khan2026mixedconsistentpinnselliptic, mishra2025priorierroranalysisconsistent, https://doi.org/10.1002/nme.70320}, where the loss functional is constructed to mimic the stability norms of the continuous problem, enabling provable approximation guarantees from pointwise data. 
Motivated by these developments, the present work combines ideas from pressure-robust numerical methods, consistent PINNs formulations, and optimal recovery theory to develop new CPINNs formulations for the stationary Oseen equations.

\subsection{Novel contribution}
\noindent\textbf{Main contributions.}
The main contributions of this work are as follows:
\begin{itemize}
\item We develop a CPINNs formulation for the stationary Oseen equations in which the loss functional is systematically derived from the stability structure of the continuous problem, providing a principled alternative to heuristic PINN designs.

\item We introduce a fundamentally new pressure-robust CPINNs formulation for the Oseen system. To the best of our knowledge, this is the first PINN framework that rigorously removes the influence of gradient forces from the velocity approximation.

\item We present a unified analysis covering both exactly divergence-free architectures and unconstrained velocity approximations, thereby bridging two distinct paradigms in physics-informed neural networks.

\item By leveraging optimal recovery theory, we establish, for the first time in the PINN/CPINN setting for Oseen-type problems, quantitative recovery estimates and error bounds for both velocity and pressure.

\item The proposed methodology constitutes a new class of pressure-robust CPINNs for incompressible flows, combining structural consistency, robustness with respect to irrotational forces, and provable accuracy guarantees.
\end{itemize}

These results establish a new analytical and computational framework for CPINNs applied to incompressible flow problems, significantly advancing the state of the art by integrating pressure-robustness with rigorous error control.

\subsection{Outline of the paper}
The remainder of the paper is organized as follows. In Section~\ref{Sec_2}, we introduce the functional framework and the CPINNs formulation for the Oseen equations. Section~\ref{Sec_3} develops the optimal recovery theory for the forcing term and boundary data, and also derives the corresponding recovery estimates for the velocity and pressure in the standard CPINNs formulation. In Section~\ref{Sec_4}, we study the divergence-free formulation and its associated recovery properties. Section~\ref{Sec_5} presents the pressure-robust CPINNs formulation and establishes optimal recovery rates for the velocity and pressure fields.  Finally, Section~\ref{Sec_6} reports numerical experiments illustrating the performance of the proposed methods.

\section{Function spaces and numerical framework}\label{Sec_2}
\subsection{Function spaces}
Let $W^{k,p}(\Omega)$ denote the usual Sobolev space of scalar-valued functions defined over a domain $\Omega$ for $k\geq 0$ is an integer and $1\leq p\leq \infty$. The associated norm is written as $\|\cdot\|_{W^{k,p}(\Omega)}.$ For the particular case $p=2$, we adopt the standard Hilbert space notation $W^{k,2}(\Omega) = H^{k}(\Omega)$. The dual space of $H^1_0(\Omega)$, denoted by $H^{-1}(\Omega)$ with the corresponding dual norm. For functions prescribed on the boundary, the trace space $H^{1/2}(\Gamma)$ is introduced as the range of the trace operator $\gamma: H^1(\Omega)\to L^2(\Gamma)$. All the boldface symbols will be used to denote the corresponding vector-valued Sobolev spaces, such as $\boldsymbol{H}^{k}(\Omega)= \left[{H}^{k}(\Omega)\right]^d$. The required function spaces for our analysis are introduced as follows:
\begin{align*}
	L^2_0(\Omega) &:= \{\, q \in L^2(\Omega) \;|\; \int_{\Omega} q\,dx = 0 \,\}, \\[4pt]
	\boldsymbol{H}^{\mathrm{div}}(\Omega) &:= \{\,\boldsymbol{v} \in \boldsymbol{L}^2(\Omega) \;|\; \nabla\!\cdot\!\boldsymbol{v} \in L^2(\Omega) \,\}, \\[4pt]
	\boldsymbol{H}^1_0(\Omega) &:= \{\, \boldsymbol{v} \in\boldsymbol{H}^1(\Omega) \;|\; \boldsymbol{v}|_{\Gamma} = 0 \,\}, \\[4pt]
	\boldsymbol{H}^{\mathrm{div}}_0(\Omega) &:= \{\, \boldsymbol{v} \in \boldsymbol{H}^{\mathrm{div}}(\Omega) \;|\; \boldsymbol{v}\!\cdot\!\boldsymbol{n} = 0 \text{ on } \Gamma \,\},  \\[4pt]
	\boldsymbol{V}_0(\Omega) &:= \{\, \boldsymbol{v} \in \boldsymbol{H}^{1}(\Omega) \;|\; \nabla \!\cdot\!\boldsymbol{v}= 0 \text{ in } \Omega \,\}.
\end{align*}

\textbf{Besov space:} Let \( 0 < s < \infty \) and \( 0 \leq p,q < \infty \). Then we introduce Besov spaces  \cite{BDS} on $\Omega\subset\mathbb{R}^d$ as 
\begin{align}
	\BB^{s}_{pq}(\Omega) = \left\{ f\in W^{[s]}_{p}(\Omega)\mid  |f|_{B^{s}_{pq}} =  \Bigg( \int_{\Omega} [t^{-s}\omega_{r}(f,t)_p]^q \,\frac{dt}{t} \Bigg)^{\frac{1}{q}} < \infty \right\} ,
\end{align}
where \(\omega_{r}(f,t)_{p}:= \sup_{|h|\leq t}\|\Delta^{r}_h f(x)\|_{L^p(\Omega_{h})}\),  \(t>0\), defined as the modulus of smoothness of $f$ and $\Delta^{r}_{h}$ represents the forward finite difference operator of order ${r>s}$ applied with a step size $h \in \mathbb{R}^d$ generated from \(\Omega_h:= \{x\in\Omega : [x,x+h]\subset\Omega\}\). The associated norm on the space is given as 
\begin{align}
	\|f\|_{B^s_{pq}(\Omega)} = \|f\|_{W^{[s]}_{p}(\Omega)}+ |f|_{B^{s}_{pq}(\Omega)}.
\end{align}
For the special case \( q' = \infty \), the Besov norm is computed by taking the supremum over \( \sup_{0 < t < h} \) instead of integrating with respect to \({dt}/{t} \). The corresponding function space is denoted by  $B^{s}_{p}(\Omega)$.
Based on the definition of the Besov space, we obtain
\begin{align}
	|f|^{}_{B^s_p(\Omega)}
	\asymp \sup_{k\geq 0} \omega_{r}(f,2^{-k})^{}_{p}\ 2^{ks}.
\end{align}
It follows that a function $f\in B^s_p(\Omega)$ if and only if it satisfies 
\begin{align}
	\omega_{r}(f,2^{-k})_{p} \leq 2^{-ks} 	|f|^{}_{B^s_p(\Omega)}, \quad k = 0,1,\ldots.
\end{align}
The relation $a \precsim b$ indicates the existence of a constant $C>0$, independent of $a$ and $b$, such that $a \le Cb$. Also, the notation $a \asymp b$ implies that there exist positive constants $C_1$ and $C_2$, independent of $a$ and $b$, such that $C_1b\leq a\leq C_2b$.

\subsection{Minimization problem}
We now proceed to define the theoretical loss functional based on the previously derived stability estimate \eqref{mainestimate}. This estimate allows us to establish an important equivalence relation between the error in the solution and the residuals of the governing PDE and associated boundary condition. Specifically, we have
\begin{align}\label{finalest1_rewritten}
	\|\boldsymbol{u} - \boldsymbol{v}\|_{\boldsymbol{H}^1(\Omega)} + \|p-q\|_{L^2(\Omega)} \asymp
	\|- \nu \Delta \boldsymbol{v} + (\boldsymbol{\beta} \cdot \nabla) \boldsymbol{v} + \nabla q + \sigma \boldsymbol{v}- \boldsymbol{f}\|_{\boldsymbol{H}^{-1}(\Omega)}\notag\\
	+ \|\nabla\cdot\boldsymbol{v}\|_{L^{2}(\Omega)} + \|\boldsymbol{g} - \boldsymbol{v}\|_{\boldsymbol{H}^{1/2}(\Gamma)}.
\end{align}
Motivated by the above equivalence, we introduce the theoretical loss functional $\L_T: V(=\boldsymbol{H}^1(\Omega)\times L_0^2(\Omega))\to \mathbb{R},$ defined as
\begin{align}\label{theoretical_loss}
	\mathcal{L}_T(\boldsymbol{v},q)
	= \|- \nu \Delta \boldsymbol{v} + (\boldsymbol{\beta} \cdot \nabla) \boldsymbol{v} + \nabla q + \sigma \boldsymbol{v}- \boldsymbol{f} \|_{\boldsymbol{H}^{-1}(\Omega)}^2 + \|\nabla\cdot\boldsymbol{v}\|^2_{L^{2}(\Omega)} \notag\\
	+ \|\boldsymbol g - \boldsymbol v\|_{\boldsymbol{H}^{1/2}(\Gamma)}^2.
\end{align}
By construction, the functional $\L_T$ is non-negative and strictly convex over $V$. Therefore, it admits a unique minimizer, which coincides with the exact solution of the governing PDE problem \eqref{1}.
Equivalently, the solution $(\boldsymbol{u}, p)$ can be characterized as
\begin{align}\label{1.6}
	(\boldsymbol{u}, p) = \mathop{\arg\min}_{(\boldsymbol{v}, q) \in  \boldsymbol{H}^1(\Omega)\times L_0^2(\Omega)} \mathcal{L}_T(\boldsymbol{v},q).
\end{align} 
However, the variational problem defined in~\eqref{1.6} involves minimization over the entire infinite-dimensional space $V$, which makes direct numerical treatment impractical. To make the problem computationally feasible, we restrict the minimization to a finite-dimensional approximation space $\N_n$.
\subsection{Numerical framework of collocation methods and PINNs}
In recent years, neural networks have become highly effective nonlinear function approximators for solving PDEs such as \eqref{1}. Let us suppose a neural network architecture, parameterized by $n$ trainable weights and biases, and $\N_n$ denotes a neural network approximation space. Within this framework, we seek an approximate solution $(\boldsymbol{\hat{u}}, \hat {p})\in \N_n$ obtained through the minimization of an appropriately defined loss functional in norm $\|\cdot\|_X$. This norm is constructed using sampled data points derived from the given functions $\boldsymbol{f}$ and $\boldsymbol{g}$ distributed across the whole domain $\Omega$ and on the boundary $\Gamma$.\\

\subsection*{Sampling of data} We define the collection of sampled data of the forcing term 
\begin{align}
	\boldsymbol{f} = (\boldsymbol{f}_1, \boldsymbol{f}_2, \ldots, \boldsymbol{f}_{\tilde{m}})\ \text{where}\ \boldsymbol{f}_i = [f^1(\boldsymbol{x}_i)\, f^2(\boldsymbol{x}_i)\ \cdots \ f^d(\boldsymbol{x}_i)]^T\ \text{for} \ i = 1,2\ldots \tilde{m},
\end{align}
corresponding to the set of sampling locations $\X: = \{\boldsymbol{x}_1, \boldsymbol{x}_2,\ldots \boldsymbol{x}_{\tilde m}\}$ over the domain. Similarly, the collection of sampled data for the boundary term is given by
\begin{align}
	\boldsymbol{g} = (\boldsymbol{g}_1, \boldsymbol{g}_2, \ldots, \boldsymbol{g}_{\bar{m}})\ \text{where}\ \boldsymbol{g}_j = [g^1(\boldsymbol{z}_j)\ g^2(\boldsymbol{z}_j)\ \cdots\ g^d(\boldsymbol{z}_j)]^T\ \text{for} \ j = 1,2\ldots \bar{m},
\end{align}
for the set of sampling locations $\Y: = \{\boldsymbol{z}_1, \boldsymbol{z}_2,\ldots \boldsymbol{z}_{\bar m}\}$ on the boundary.

In the PINNs approach, we seek an approximation $(\boldsymbol{\hat{u}}, \hat {p})\in \N_n$ that best fits the given data by minimizing the discrete loss functional. Then the general form of the minimization problem is
\begin{align}
	(\boldsymbol{\hat{u}}, \hat {p}) \in  \mathop{\arg\min}_{(\boldsymbol{v}, q) \in  \N_n} \mathcal{L}(\boldsymbol{v},q).
\end{align}
where the discrete least square loss is given by
\begin{align}
	\mathcal{L}(\boldsymbol{v},q)
	= \frac{1}{\tilde{m}}\sum_{l=1}^{d}\sum_{i=1}^{\tilde{m}}|- \nu \Delta v^l(\boldsymbol{x}_i)+ (\boldsymbol{\beta} \cdot \nabla) v^l(\boldsymbol{x}_i)+ \frac{\partial p}{\partial x^l}(\boldsymbol{x}_i) + \sigma v^l(\boldsymbol{x}_i)- f^l(\boldsymbol{x}_i)|^2\notag\\ +  \frac{1}{\tilde{m}} \sum_{i=1}^{\tilde{m}} |\nabla\cdot\boldsymbol{v} (\boldsymbol{x}_i)|^2
	+ \frac{1}{\bar{m}}\sum_{l=1}^{d}\sum_{j=1}^{\bar{m}}|g^l(\boldsymbol{z}_j) - v^l(\boldsymbol{z}_j) |^2.
\end{align}
for $\boldsymbol{x}_i = (x_i^1, x_i^2,\ldots, x_i^d)$. To achieve consistency with the underlying functional space $V$, a more suitable loss function $\L^*:V\to \mathbb{R}$ is proposed, which mimics the exact discrete version of continuous norms. Its squared form is given by 
\begin{align}
	\mathcal{L}^*(\boldsymbol{v},q)
	= \sum_{l=1}^{d}\hspace{-0.1cm} \Bigg[\frac{1}{\tilde{m}}\sum_{i=1}^{\tilde{m}}|- \nu \Delta v^l(\boldsymbol{x}_i)+ (\boldsymbol{\beta} \cdot \nabla) v^l(\boldsymbol{x}_i)+ \frac{\partial p}{\partial x^l}(\boldsymbol{x}_i) + \sigma v^l(\boldsymbol{x}_i)- f^l(\boldsymbol{x}_i)|^\gamma\Bigg]^{\frac{2}{\gamma}}\notag\\\hspace{-0.1cm}  +  \frac{1}{\tilde{m}} \sum_{i=1}^{\tilde{m}} |\nabla\cdot\boldsymbol{v}(\boldsymbol{x}_i) |^2
	+ \frac{1}{\bar{m}}\sum_{l=1}^{d}\sum_{j=1}^{\bar{m}}|g^l(\boldsymbol{z}_j) - v^l(\boldsymbol{z}_j)|^2 \notag\\+
	\frac{1}{\bar m^2}\sum_{l=1}^{d}
	\sum_{\substack{i,j=1 \\ i\neq j}}^{\bar m}
	\frac{\lvert [g - v](\boldsymbol{z}_i) - [g -v](\boldsymbol{z}_j) \rvert^2}{\lvert \boldsymbol{z}_i - \boldsymbol{z}_j\rvert^d}.
\end{align}
This modified formulation ensures that the loss is consistent with the true solution space $V$, leading to the consistent PINNs (CPINNs) formulation. The parameter $\gamma$ represents the smallest exponent for which the embedding $L^\gamma(\Omega)\hookrightarrow H^{-1}(\Omega)$ holds.

\section{Local polynomial approximation and interpolation}\label{Sec_3}
To approximate functions in Besov spaces, we employ local polynomial interpolation on a dyadic simplicial decomposition of $\Omega = (0,1)^d$. For a given parameter $r>1,$ we sample the function on the uniform tensor grid
\begin{align*}
	G_r := \left\{\left(\frac{j_1}{r-1}, \ldots,\frac{j_d}{r-1}\right), j_i = \{0,1,\ldots,r-1\}\right\}\subset[0,1]^d. 
\end{align*}
Each dyadic cube in the partition $\D_k$ is further subdivided into simplices $E\in \T_K,$ and on every simplex we use Lagrange interpolation associated with the polynomial space $\mathbb{P}^d_r$ of total degree less than $r$. The interpolant on $E$ is defined by $L_E(f)=\sum_{i}^{}f(x_i)\phi_{E,i}$, where $\phi_{E,i}$ is a basis function mapped from a reference simplex. This construction defines a uniformly bounded projection operator with Lebesgue constant $\Lambda_r$, leading to a near-best local approximation estimate
\begin{align}
	\|f-L_E(f)\|_{C(E)}\leq (1+\Lambda_r) \mathop{\inf}_{P \in  \mathbb{P}^d}\|f-P\|_{C(E)}.
\end{align} 
The interpolants constructed on each simplex collectively yield the global piecewise polynomial interpolant
\begin{align}\label{3.2}
	S_k^*(f):= \sum_{E\in \T_K}^{}L_E(f)\chi_E, 
\end{align}
which is continuous across element interfaces. 
\subsection*{Vector-valued interpolation in $\boldsymbol{H}^1(\Omega)$}

We now extend the previous scalar-valued interpolation results to vector-valued functions.  
Let $d \in \mathbb{N}$ and consider functions
\[
\boldsymbol{f} = (f_1,\dots,f_d) : \Omega \to \mathbb{R}^d,
\]
with components $f_i : \Omega \to \mathbb{R}$, $i=1,\dots,d$. We define the vector-valued Sobolev space
\[
\boldsymbol{H}^1(\Omega)
:= [H^1(\Omega)]^d
= \bigl\{ \boldsymbol{f} = (f_1,\dots,f_d) : f_i \in H^1(\Omega),\ i=1,\dots,d \bigr\},
\]
and equip it with the standard product norm
\[
\|\boldsymbol{f}\|_{\boldsymbol{H}^1(\Omega)}^2
:= \sum_{i=1}^d \|f_i\|_{H^1(\Omega)}^2.
\]
Similarly, for Besov spaces, we set
\[
\boldsymbol{B}^s_{pq}(\Omega)
:= \bigl[B^s_q(L^p(\Omega))\bigr]^d
= \bigl\{\boldsymbol{f} = (f_1,\dots,f_d) : f_i \in B^s_q(L^p(\Omega)),\ i=1,\dots,d \bigr\},
\]
with seminorm
\[
|\boldsymbol{f}|_{\boldsymbol{B}^s_{pq}(\Omega)}^2
:= \sum_{i=1}^d |f_i|_{B^s_{pq}(\Omega)}^2.
\]

Recall the scalar interpolation operator $L_E : C(T) \to \mathbb{P}_r$ on a simplex $E \subset I \in \mathcal{D}_k(\Omega)$ and the corresponding piecewise polynomial interpolant defined in~\eqref{3.2}.
For a vector-valued function $\boldsymbol{f} = (f_1,\dots,f_d)$ we define the vector-valued local
interpolation operator componentwise by
\[
\boldsymbol{L}_E(\boldsymbol{f})
:= \bigl(L_E(f_1),\dots,L_E(f_d)\bigr),
\]
and the global piecewise polynomial interpolant
\begin{align}\label{3.3}
	\boldsymbol{S}_k^*(\boldsymbol{f})
	:= \sum_{E \in \mathcal{T}_k} \boldsymbol{L}_E(\boldsymbol{f})\,\chi_E
	= \Bigl( S_k^*(f_1),\dots,S_k^*(f_d) \Bigr).
\end{align}
In particular, $\boldsymbol S_k^*(\boldsymbol{f})$ is continuous on $\Omega$, and each component belongs
to the same scalar finite-dimensional polynomial space as before. The following theorem characterizes the approximation accuracy of $\boldsymbol{S}_k^*$.
\begin{theorem}\label{thm_3.1}
	Assume $d\geq 2.$ Consider a vector field $\boldsymbol{f} = (f_1,f_2,\ldots,f_d)\in {\boldsymbol{B}^s_{pq}(\Omega)}$ with $s>d/p$ and $0<p\leq 2$. Let $\boldsymbol{S}_k^*$ be the vector-valued interpolation operator as defined in~\eqref{3.3}. Then for $k\geq 0$, $\boldsymbol{S}_k^*$ satisfies the estimate
	\begin{align}
		\|\boldsymbol{f} -\boldsymbol{S}_k^*(\boldsymbol{f})\|_{\boldsymbol{H}^1(\Omega)}\leq C\,|\boldsymbol{f} |_{\boldsymbol{B}^s_{pq}(\Omega)}\, 2^{-k\left(s-1-\frac{d}{p}+\frac{d}{2}\right)},
	\end{align}
	for a constant $C>0$ that does not depend on $\boldsymbol{f}$ and $k$.
\end{theorem}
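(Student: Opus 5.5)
The strategy is to reduce the vector-valued estimate to a scalar one and then sum over components. The interpolant is built componentwise, $\boldsymbol{S}_k^*(\boldsymbol{f})=(S_k^*(f_1),\dots,S_k^*(f_d))$, and both the $\boldsymbol{H}^1$ norm and the Besov seminorm are defined as $\ell^2$-sums over components. So it is enough to prove the scalar bound
\begin{align*}
\|f_i-S_k^*(f_i)\|_{H^1(\Omega)}\le C\,|f_i|_{B^s_{pq}(\Omega)}\,2^{-k\left(s-1-\frac dp+\frac d2\right)},\qquad i=1,\dots,d,
\end{align*}
with $C$ independent of $i$ and $k$. Squaring and summing over $i$ then gives the claim with the same constant.

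\textbf{Step 1 (local estimate).} Fix a simplex $E\in\mathcal{T}_k$ of diameter $h\asymp 2^{-k}$. Since $L_E$ reproduces $\mathbb{P}_r^d$, for every polynomial $P$ we can write
\begin{align*}
f-L_E f=(f-P)-L_E(f-P).
\end{align*}
We bound each piece in $H^1(E)$.

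For the polynomial part $L_E(f-P)$, inverse estimates on the shape-regular simplex give
\begin{align*}
\|\nabla L_E(f-P)\|_{L^2(E)}\precsim h^{-1}|E|^{1/2}\,\Lambda_r\,\|f-P\|_{C(E)}.
\end{align*}

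For the term $f-P$ itself, choose $P$ as a near-best local approximation. Whitney's theorem together with the embedding $B^s_{pq}\hookrightarrow C$ (valid since $s>d/p$) gives the sup-norm bound
\begin{align*}
\inf_{P}\|f-P\|_{C(E)}\precsim h^{s-d/p}\,|f|_{B^s_{pq}(\tilde E)},
\end{align*}
where $\tilde E$ is a slight enlargement of $E$. It also gives the gradient bound
\begin{align*}
\|\nabla(f-P)\|_{L^2(E)}\precsim h^{s-1-d/p+d/2}\,|f|_{B^s_{pq}(\tilde E)},
\end{align*}
which uses $s-1>d/p-d/2$, so that $B^s_{pq}\hookrightarrow H^1$ locally. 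Both bounds follow by scaling to the reference simplex, applying the embedding there, and scaling back.

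Combining the two pieces, and using $|E|^{1/2}\asymp h^{d/2}$, yields
\begin{align*}
\|f-L_E f\|_{H^1(E)}\precsim h^{s-1-\frac dp+\frac d2}\,|f|_{B^s_{pq}(\tilde E)}.
\end{align*}

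\textbf{Step 2 (globalization).} Since $S_k^*(f)$ is continuous across interfaces, it lies in $H^1(\Omega)$ and has no jump terms. We therefore have
\begin{align*}
\|f-S_k^*f\|_{H^1(\Omega)}^2=\sum_{E}\|f-L_Ef\|_{H^1(E)}^2\precsim 2^{-2k(s-1-d/p+d/2)}\sum_E |f|^2_{B^s_{pq}(\tilde E)}.
\end{align*}
The main obstacle is to bound $\sum_E|f|_{B^s_{pq}(\tilde E)}^2$ by $|f|_{B^s_{pq}(\Omega)}^2$. Because $p\le2$, the $\ell^p\hookrightarrow\ell^2$ inequality gives
\begin{align*}
\Bigl(\sum_E a_E^2\Bigr)^{1/2}\le \Bigl(\sum_E a_E^p\Bigr)^{1/p},
\end{align*}
so it suffices to have the subadditivity $\sum_E|f|^p_{B^s_{pq}(\tilde E)}\precsim|f|^p_{B^s_{pq}(\Omega)}$. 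The enlarged sets $\tilde E$ have bounded overlap.

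To establish this, I would first work with $q=\infty$, using local moduli $\omega_r(f,t)_{L^p(\tilde E)}$, and invoke the standard DeVore–Popov subadditivity of $p$-th powers of local moduli over a cover with finite overlap. For general $q$ I would reduce to this case using $B^s_{pq}\hookrightarrow B^s_{p\infty}$ with a comparable seminorm. If the local-seminorm route proves delicate, the fallback is to replace $|f|_{B^s_{pq}(\tilde E)}$ by a local approximation error $E_r(f,\tilde E)_p$ and use the multilevel characterization
\begin{align*}
\sum_{E\in\mathcal{T}_k}E_r(f,\tilde E)_p^p\precsim \omega_r(f,2^{-k})_p^p\le 2^{-kps}|f|^p_{B^s_p}.
\end{align*}
This fallback also yields the stated rate directly.
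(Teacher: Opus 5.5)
Your componentwise reduction is exactly the paper's proof. The paper quotes the scalar bound from the cited reference for each component and sums squares, so citing that bound would have finished the job. The gap is in your self-contained proof of the scalar bound, at the globalization step. The inequality $\sum_E|f|^2_{B^s_{pq}(\tilde E)}\lesssim|f|^2_{B^s_{pq}(\Omega)}$ does hold when $\max\{p,q\}\le 2$. It is false for $q=\infty$, which is exactly the case you reduce to, and also the case $\boldsymbol{B}^{\bar s}_{2,\infty}$ the paper later uses for the boundary class. The DeVore--Popov superadditivity $\sum_E\omega_r(f,t)^p_{L^p(\tilde E)}\lesssim\omega_r(f,ct)^p_{L^p(\Omega)}$ holds for each fixed $t$, but it does not survive $\sup_t$, because different cells can attain their supremum at different scales. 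For a concrete counterexample, place in $N\asymp 2^{kd}$ distinct cells the bumps $b_j=t_j^{s-d/p}\varphi((x-c_j)/t_j)$ with distinct widths $t_j=2^{-k-2-j}$. Each bump has local $B^s_{p\infty}$ seminorm $\asymp 1$, so $\sum_E|f|^2_{B^s_{p\infty}(\tilde E)}\gtrsim N$. On the other hand, $t^{-sp}\omega_r(f,t)^p_p\lesssim\sum_j\min\{(t_j/t)^{sp},(t/t_j)^{(r-s)p}\}\lesssim 1$ uniformly in $t$, so $|f|_{B^s_{p\infty}(\Omega)}\lesssim 1$. This $f$ does not violate the theorem; it shows that bounding the cellwise error by local Besov seminorms is too lossy. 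Passing to $q=\infty$ via $B^s_{pq}\hookrightarrow B^s_{p\infty}$ moves you to the hardest case, not an easier one.

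The fallback does not close the gap either. Step 1 genuinely needs every scale finer than $h$: $\|f-P\|_{C(E)}$ and $\|\nabla(f-P)\|_{L^2(E)}$ cannot be controlled by the single-scale quantity $E_r(f,\tilde E)_p$. For example, a bump of width $t\ll h$ vanishing at the nodes has $E_r\lesssim t^s$, yet its interpolation error in $H^1$ is $\asymp t^{s-1-d/p+d/2}\gg h^{-1-d/p+d/2}t^s$. What is missing is a sum over levels. Write $f-S_k^*f=\sum_{j\ge k}(S^*_{j+1}f-S^*_jf)$. For continuous piecewise polynomials $g$ on level $j+1$, use $\|g\|_{H^1(\Omega)}\lesssim 2^{j(1+d/p-d/2)}\|g\|_{L^p(\Omega)}$, which follows from a local inverse estimate plus $\ell^p\subset\ell^2$ over cells; this is where $p\le 2$ enters. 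Combine it with $\|f-S^*_jf\|_{L^p(\Omega)}\lesssim 2^{-js}|f|_{B^s_{p\infty}(\Omega)}$ for $s>d/p$, which is itself a multilevel argument using only the global modulus $\omega_r(f,2^{-i})_p$ at each level $i\ge j$. The resulting geometric series converges because $s-1-d/p+d/2>0$. Since only global moduli appear at each level, this route needs no superadditivity of local Besov seminorms and covers $q=\infty$.
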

\begin{proof}
	Since $\boldsymbol{B}^s_{pq}(\Omega)$ is defined componentwise, we can write
	\[
	\boldsymbol{f} = (f_1,\dots,f_d), 
	\qquad f_i \in B^s_{pq}(\Omega), \quad i=1,\dots,d,
	\]
	for the given assumptions $s > d/p$ and $0 < p \le 2$, so each component $f_i$ belongs to the scalar Besov space.
	Recall that $\boldsymbol{S}_k^*$ is defined by applying the scalar operator $S_k^*$ to each component of $\boldsymbol{f}$, that is,
	\[
	\boldsymbol{S}_k^*(\boldsymbol{f}) 
	:= \bigl(S_k^*(f_1),\dots,S_k^*(f_d)\bigr).
	\]
	For the scalar interpolation operator $S_k^*$, the result in~\cite{BVPS} states that there exists a constant $C>0$, independent of $k$ and of $f_i$, such that for every $k\ge 0$,
	\begin{equation}\label{eq:scalar-estimate}
		\|f_i - S_k^*(f_i)\|_{H^1(\Omega)}
		\;\le\;
		C\,|f_i|_{B^s_{pq}(\Omega)}\,
		2^{-k\left(s - 1 - \frac{d}{p} + \frac{d}{2}\right)},
		\qquad i=1,\dots,d.
	\end{equation}
	We endow $\boldsymbol{H}^1(\Omega)$ with the standard product norm defined as
	\[
	\|\boldsymbol{v}\|_{\boldsymbol{H}^1(\Omega)}^2
	:= \sum_{i=1}^d \|v_i\|_{H^1(\Omega)}^2,
	\qquad 
	\boldsymbol{v} = (v_1,\dots,v_d).
	\]
	Then, by definition of $\boldsymbol{S}_k^*$ and the above scalar estimate~\eqref{eq:scalar-estimate}, we obtain
	\begin{align*}
		\|\boldsymbol{f} - \boldsymbol{S}_k^*(\boldsymbol{f})\|_{\boldsymbol{H}^1(\Omega)}^2
		&= \sum_{i=1}^d \|f_i - S_k^*(f_i)\|_{H^1(\Omega)}^2
		\\
		&\le \sum_{i=1}^d 
		\Bigl(
		C\,|f_i|_{B^s_{pq}(\Omega)}\,
		2^{-k\left(s - 1 - \frac{d}{p} + \frac{d}{2}\right)}
		\Bigr)^2
		\\
		&= C^2\,2^{-2k\left(s - 1 - \frac{d}{p} + \frac{d}{2}\right)}
		\sum_{i=1}^d |f_i|_{B^s_{pq}(\Omega)}^2.
	\end{align*}
	Hence, after applying the square root, we obtain
	\begin{align*}
		\|\boldsymbol{f} - \boldsymbol{S}_k^*(\boldsymbol{f})\|_{\boldsymbol{H}^1(\Omega)}
		\;&\le\;
		C\,2^{-k\left(s - 1 - \frac{d}{p} + \frac{d}{2}\right)}
		\Biggl(\sum_{i=1}^d |f_i|_{B^s_{pq}(\Omega)}^2\Biggr)^{1/2}\\
		&= C\, |\boldsymbol{f}|_{\boldsymbol{B}^s_{pq}(\Omega)}\, 2^{-k\left(s - 1 - \frac{d}{p} + \frac{d}{2}\right)}.
	\end{align*}
\end{proof}
\subsection{Optimal recovery}
Optimal recovery theory provides a framework for quantifying an unknown function by reconstructing it from limited data. Our goal is to approximate the unknown solutions $\boldsymbol{u}$ and $p$ of \eqref{1} to accuracy $\varepsilon$ in the $\boldsymbol H^1(\Omega)$ and $L^2(\Omega)$ norm respectively, by using only finitely many samples of the data $\boldsymbol{f}$ and $\boldsymbol{g}$ observed at corresponding data sites $\X$ and $\Y$.   

We assume that the vector-valued source term and the boundary data defined through the trace operator satisfies
\begin{align}\label{eq:F-def}
	\begin{cases}
		\boldsymbol{f}\in \F:=U(\B), 
		\quad \B=\boldsymbol{B}^s_{pq}(\Omega), \quad 0<p,q\le\infty,\;\; s>d/p,\\
		\boldsymbol{g}\in \G:=\operatorname{Tr}(U(\bar \B)),
		\quad 
		\bar \B = \boldsymbol{B}^{\bar s}_{\bar p \bar q}(\Omega),\quad 
		\bar s > \frac{d}{\bar p},\;\; 0<\bar p\le 2,\,  \quad 0<\bar q\le\infty,
	\end{cases}
\end{align}
so that $\F$ embeds compactly into $\left[C(\Omega)\right]^d$ and $\G$ as a subset of $  \boldsymbol{H} ^{1/2}(\Gamma)$ compactly embeds into $\left[C(\Gamma)\right]^d$. 
These assumptions imply that any admissible solution belongs to
\begin{equation}\label{eq:U-def}
	\U := \{(\boldsymbol{u},p)\in \left[C(\Omega)\right]^{d+1}: (\boldsymbol{u},p) \text{ solves~\eqref{1} for } \boldsymbol{f}\in \F,\; \boldsymbol{g}\in \G\}.
\end{equation}

Given sampled data $\boldsymbol{f}=(\boldsymbol{f}_1,\dots,\boldsymbol{f}_m)$ and $\boldsymbol{g}=(\boldsymbol{g}_1,\dots,\boldsymbol{g}_{\bar m})$, the consistent data sets are
\begin{align}\label{eq:Fdata}
	\begin{cases}
		\F_{\text{data}} := \{\boldsymbol{f}\in \F: \boldsymbol{f}(\boldsymbol{x}_i)=\boldsymbol{f}_i,\; i=1,\dots,\tilde m\},\\
		\G_{\text{data}} := \{\boldsymbol{g}\in \G: \boldsymbol{g}(\boldsymbol{z}_j)=\boldsymbol{g}_j,\; j=1,\dots,\bar m\},
	\end{cases}
\end{align}
and the corresponding admissible solutions are
\begin{align*}
	\mathcal{U}_{\text{data}} &:= \left\{ (\boldsymbol{u},p)\in \U \, \middle| \, 
	\begin{aligned}
		- \nu \Delta \boldsymbol{u}(\boldsymbol{x}_i) + (\boldsymbol{\beta} \cdot \nabla) \boldsymbol{u}(\boldsymbol{x}_i) + \nabla p(\boldsymbol{x}_i) + \sigma \boldsymbol{u}(\boldsymbol{x}_i) &= \boldsymbol{f}_i,  \\
		\hspace{-2mm}\nabla\cdot \boldsymbol{u}(\boldsymbol{x}_i) = 0,\quad
		u(\boldsymbol{z}_j)&= \boldsymbol{g}_j.
	\end{aligned}
	\right\}
\end{align*}
We aim to determine how strongly the data space $\U_{\text{data}}$ determines the functions $\boldsymbol{u}$ and $p$.
To determine the information of $(\boldsymbol{f},\boldsymbol{g})$, recall that for a compact subset 
$K$ of a Banach space $X$, the Chebyshev ball $B(K)_X$ with radius 
\begin{equation}\label{eq:rad}
	R(K)_X := \operatorname{rad}(B(K))_X,
\end{equation}
describes the optimal recovery error. Thus, the OR error for solution $(\boldsymbol{u},p)$ is given by $R(\U_{\text{data}})_X = \operatorname{rad}(B(\U_{\text{data}}))_X.$ For fixed data sites $(\X,\Y)$, the uniform OR rate over all admissible data is defined as
\begin{equation}\label{eq:uniform-OR}
	R^*(\U,\X,\Y)_X := \sup_{(\boldsymbol{u},p)\in \U} R(\U_{\text{data}}(\boldsymbol{u},p))_X.
\end{equation}
If $m = |\X|+|\Y|$ is the total sampling budget, then the uniform OR rate is given as
\begin{equation}\label{eq:RmU}
	R^*_m(\U)_X := 
	\inf_{\substack{\X\subset\Omega,\; \Y\subset\Gamma\\ |\X|+|\Y|=m}}
	R^*(\U,\X,\Y)_X.
\end{equation}
Analogous definitions give the OR rates $R^*(\F,\X)_X$, $R^*_{\tilde{m}}(\F)_X$, and the corresponding quantities for recovering $\boldsymbol{g}$ from $\G$.
\subsubsection{Optimal recovery of \texorpdfstring{$\boldsymbol{f}$}{f}}
Let $\boldsymbol{f}:\Omega\to \mathbb{R}^d$ be a vector-valued function belong to the unit ball of Besov space
\begin{align}
	\F = U(\boldsymbol{B}^s_{pq}(\Omega)), \quad s>d/p, \ 0<p,q\leq \infty.
\end{align}
We will show that the optimal decay rate for recovering the function $\boldsymbol{f}$ from $\tilde m$ sampling points is given by
\begin{align*}
	R^*_{\tilde{m}}(\F,\X)\asymp\tilde{m}^{-\alpha_X}.
\end{align*}
\begin{theorem}\label{thm4.1}
	Consider the domain $\Omega = (0,1)^d$ and a unit ball of Besov space $U(\boldsymbol{B}^s_{pq}(\Omega))$ for $s>d/p$ and $0<p,q\leq \infty$. For the Banach space $X= \boldsymbol{H}^{-1}(\Omega)$, the following OR rate holds:
	\begin{itemize}
		\item If either $d\geq 3$, or $d=2$ with $p>1,$ then for $m\geq 1$
		\begin{align}
			R^*_{\tilde m}(\F)\asymp\tilde{m}^{-\alpha_{-1}},
		\end{align}
		\item When $d=2$ with $0<p\leq 1$, the rate will become
		\begin{align}\label{4.9}
			\tilde{m}^{-\alpha_{-1}}\lesssim R^*_{\tilde m}(\F)\lesssim\tilde{m}^{-\alpha_{-1}}\log(\tilde{m}),
		\end{align}
	\end{itemize}
	where the constants appear in equivalence are free from $\tilde{m}$. The exponent is given by $\alpha_{-1} = \frac{s}{d}-\left[\frac{1}{p}-\frac{1}{\gamma}\right]_+$ for $ \frac{1}{\gamma} = \frac{1}{2}+\frac{1}{d}$.
\end{theorem}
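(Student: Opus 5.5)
The plan is to reduce the vector-valued statement to the scalar optimal recovery result for $U(B^s_{pq}(\Omega))$ in $H^{-1}(\Omega)$ from the CPINN literature \cite{BVPS}. Since $\boldsymbol{H}^{-1}(\Omega)=[H^{-1}(\Omega)]^d$ carries the product norm and $\boldsymbol{B}^s_{pq}(\Omega)$ is defined componentwise with an $\ell^2$-seminorm, the unit ball $\F$ contains $e_1 U(B^s_{pq})$ (fields with only the first component nonzero). It is also contained in $[U(B^s_{pq})]^d$. Both bounds will be proved with the same data sites for all components, because each sample $\boldsymbol f_i$ records all $d$ components at $\boldsymbol x_i$.

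\textbf{Upper bound.} Take $\tilde m \asymp 2^{kd}$ points: the union of the local Lagrange grids $G_r$ over the simplices of $\T_k$. As recovery map, use the interpolant $\boldsymbol S_k^*(\boldsymbol f)$ of \eqref{3.3}, which depends only on the data. For any two $\boldsymbol f,\boldsymbol f'\in\F_{\rm data}$ the interpolants coincide, so the Chebyshev radius is bounded by $\sup_{\boldsymbol f\in \F}\|\boldsymbol f-\boldsymbol S_k^*\boldsymbol f\|_{\boldsymbol H^{-1}}$.

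Componentwise, the key step is to show
\[
\|f-S_k^*f\|_{H^{-1}}\lesssim \|f-S_k^*f\|_{L^\gamma} \lesssim 2^{-k(s-d[1/p-1/\gamma]_+)}|f|_{B^s_{pq}}.
\]
The first inequality is the embedding $L^\gamma\hookrightarrow H^{-1}$, valid for $d\ge3$ with $1/\gamma=1/2+1/d$, which is the dual of Sobolev $H^1_0\hookrightarrow L^{2^*}$. The second is the standard local polynomial interpolation estimate: on each simplex $E$ use the near-best bound in $C(E)$ together with $\omega_r(f,2^{-k})_p$-based Whitney estimates, then sum over $E$ using $\ell^p\hookrightarrow\ell^\gamma$ when $p\le \gamma$, or Hölder when $p>\gamma$. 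Summing the squares over the $d$ components, exactly as in the proof of Theorem~\ref{thm_3.1}, gives $R^*_{\tilde m}(\F)\lesssim \tilde m^{-\alpha_{-1}}$.

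\textbf{The case $d=2$.} Here $\gamma=1$ and $L^1\not\hookrightarrow H^{-1}$, which I expect to be the main obstacle. For $p>1$ I would instead use $L^{\gamma'}\hookrightarrow H^{-1}$ with any $\gamma'>1$ chosen close enough to $p$ so that $[1/p-1/\gamma']_+$ is unaffected; this keeps the full rate. For $0<p\le1$, I would decompose $f-S_k^*f=\sum_{j\ge k}(S_{j+1}^*f-S_j^*f)$. Each dyadic piece I would estimate in $H^{-1}$ by duality against $H^1_0\hookrightarrow \mathrm{BMO}/L^q$ at level $j$, using the logarithmic loss of the two-dimensional Sobolev embedding (constant $\sim\sqrt{j}$, or $\log$ of the number of cells). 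Summing over $j$ should produce the factor $\log\tilde m$ in \eqref{4.9}. If that bookkeeping fails, I would fall back on quoting the scalar $d=2$ result of \cite{BVPS} directly and lifting it componentwise.

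\textbf{Lower bound.} Given any $\X$ with $|\X|=\tilde m$, pick $\ge \tilde m$ dyadic cubes of side $\asymp \tilde m^{-1/d}$ that contain no point of $\X$. Build a bump $\phi=\sum_I \epsilon_I \tilde m^{-s/d+\dots}\psi_I$ supported in these cubes, normalized so that $|\phi|_{B^s_{pq}}\le1$. Both $\pm e_1\phi$ vanish on $\X$, hence lie in $\F_{\rm data}$ for zero data, so the radius is at least $\|\phi\|_{H^{-1}}$. Testing against $v=\sum_I\psi_I$-type functions in $H^1_0$ gives $\|\phi\|_{H^{-1}}\gtrsim\tilde m^{-\alpha_{-1}}$. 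When $p<\gamma$, the lower bound should instead use a single bump concentrated in one empty cube, which yields the $[1/p-1/\gamma]_+$ term. Taking the infimum over $\X$ completes the proof.
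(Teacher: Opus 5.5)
Your architecture matches the paper's proof in Appendix~\ref{App_A}. The upper bound is a componentwise lift of a scalar interpolation estimate; the paper simply quotes \cite[Theorem~3.1]{BVPS} in every case, which is your fallback. The lower bound uses bumps on data-free cubes of mesh $\asymp\tilde m^{-1/d}$: a single bump when $p\le\gamma$, many bumps when $p\ge\gamma$.

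Where you deviate, you are more careful than the paper, and your additions are sound:
\begin{enumerate}
\item You notice that $\gamma=1$ for $d=2$, so $L^\gamma\hookrightarrow H^{-1}$ fails there. The paper glosses over this by citing \cite{BVPS}.
\item Your repair with $\gamma'\in(1,p]$ is correct. Note that you need $\gamma'\le p$, not merely ``close to $p$''.
\item Your multilevel sketch for $d=2$, $p\le 1$ actually closes. Each $g_j:=S_{j+1}^*f-S_j^*f$ is piecewise polynomial on cells of area $\asymp 4^{-j}$. Pair $g_j$ with the local $L^2$-projections of $v\in H^1_0(\Omega)$ and use $\|v\|_{L^q}\lesssim\sqrt q\,\|v\|_{H^1}$ with $q\asymp j$. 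This gives $\|g_j\|_{H^{-1}}\lesssim\sqrt j\,\|g_j\|_{L^1}\lesssim\sqrt j\,2^{-j(s-2/p+2)}$. The geometric sum then yields even $\sqrt{\log\tilde m}\,\tilde m^{-\alpha_{-1}}$, better than the stated bound.
\item Reducing the lower bound to $e_1U(B^s_{pq})\subset\F$ is cleaner than the paper's vector bump.
\end{enumerate}

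\textbf{The gap: the many-bump lower bound ($p\ge\gamma$), where the target is $\tilde m^{-s/d}$.} Suppose $v=\sum_I\psi_I$ is built from the localized bumps of side $h\asymp\tilde m^{-1/d}$. Then $\|\nabla v\|_{L^2}^2\asymp\tilde m\,h^{d-2}=\tilde m^{2/d}$, while $\int\phi\,v\asymp\tilde m^{-s/d}$. The quotient therefore only gives $\|\phi\|_{H^{-1}}\gtrsim\tilde m^{-(s+1)/d}$.

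The signs $\epsilon_I$ are not cosmetic either. For random signs and $d\ge 3$ the cross terms cancel, and $\mathbb{E}\|\phi\|^2_{H^{-1}}=\tilde m^{-2s/d}\sum_I\|\psi_I\|^2_{H^{-1}}\lesssim\tilde m^{-2(s+1)/d}$, because $\|\psi_I\|_{H^{-1}}\lesssim h^{1+d/2}$. So no choice of test function rescues a mixed-sign $\phi$.

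\textbf{The fix is the paper's construction.}
\begin{enumerate}
\item Take $\epsilon_I\equiv 1$ with amplitude $\tilde m^{-s/d}$. You can, for instance, reduce to $p=\infty$ via $U(\boldsymbol B^s_{\infty q})\subset U(\boldsymbol B^s_{pq})$, which makes the normalization trivial.
\item Test against one fixed smooth $v\in H^1_0(\Omega)$, independent of $\tilde m$, with $v\ge c>0$ on $\Omega_0=[1/4,3/4]^d$.
\item Count the data-free cells inside $\Omega_0$. With $\tilde m=2^{kd}$ and mesh $2^{-(k+2)}$, $\Omega_0$ contains $2^d\tilde m$ cells, of which at least $(2^d-1)\tilde m$ are data-free. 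Hence $\int\phi\,v\gtrsim\tilde m\cdot\tilde m^{-s/d}\cdot\tilde m^{-1}=\tilde m^{-s/d}$.
\end{enumerate}
The localized, $H^1$-normalized test bump $c\,h^{1-d/2}\varphi(h^{-1}(\cdot-\xi_I))$ is the right choice only in the single-bump case $p\le\gamma$.
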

\begin{proof}
	The proof is provided in the Appendix~\ref{App_A}.
\end{proof}
\subsubsection{Optimal recovery of \texorpdfstring{$\boldsymbol{g}$}{g}}
In this section, we study optimal recovery of boundary traces in $\boldsymbol{H}^{1/2}(\Gamma)$. Let us define $\overline{\B} = \boldsymbol{B}^{\bar s}_{\bar{p}\bar{q}}(\Omega)$ and the model class $\G := \{\boldsymbol{g}=\operatorname {Tr}(\boldsymbol{v})\mid \|\boldsymbol{v}\|_{\overline{\B}}\leq 1\}$. Assume the smoothness condition $\bar s>d/\bar p$ and $0<\bar p\leq 2$, then the functions in $U(\B)$ are continuous and compactly embedded in $\boldsymbol{H}^1(\Omega)$. Let $G_{k,r}$ be the tensor-product grid over $\overline{\Omega} = [0,1]^d$. Therefore the sampling on the boundary uses the points
\begin{align}
	\Y := G_{k,r}\cap\Gamma = \{\boldsymbol z_i\}_{i=1}^{\bar m},\ \text{for} \ \bar{m}\asymp 2^{k(d-1)}. 
\end{align}
Let $\overline{ \phi}_i$ are the trace of the Lagrange basis functions $ \phi_i$ over the boundary. For $\boldsymbol  g\in\G$, define the interpolation polynomial on the boundary as
\begin{align}
	\overline{\boldsymbol S}_k(\boldsymbol g):= (\overline{S}_k(g_1), \ldots, \overline{S}_k(g_d)) \ \text{where } \overline{S}_k(g_i)= \sum_{j=1}^{\bar m} g_i(\boldsymbol z_j)\overline{\phi}_j, \quad i= 1,\ldots,d.
\end{align}
If $\boldsymbol v\in \B$ be any function with trace $\boldsymbol g$ and $\boldsymbol S^*_k(v)$ is its Lagrange interpolant on $G_{k,r}$, then $\operatorname{Tr}(\boldsymbol S^*(\boldsymbol v)) = \overline{\boldsymbol S}_k(\boldsymbol g)$, because interior basis functions vanish on the boundary. Since $\boldsymbol S^*(\boldsymbol v)\in \boldsymbol H^1(\Omega)$, this implies $\overline{\boldsymbol S}_k(\boldsymbol g)\in \boldsymbol H^{1/2}(\Gamma)$.
\begin{theorem}\label{thm4.2}
	Consider the Besov ball $\overline{\B} = \boldsymbol  B^{\bar{s}}_{\bar{p}\bar{q}}$ with smoothness conditions $\bar{s}>d/\bar{p}$, $0<\bar p\leq 2$ and $\bar{q}\in (0,\infty]$. Define the associated model class $\G = \{Tr(\boldsymbol v): \|\boldsymbol v\|_{\overline{\B}}\leq 1\}$. Then, for $\bar m$ the optimal recovery error of functions in $\G$ under the norm $\boldsymbol H^{1/2}(\Gamma)$ given by
	\begin{align}\label{4.24}
		R^*_{\bar{m}}(\G)\asymp \bar{m}^{-\beta}, \quad \text{for}\quad \beta = \frac{\bar s -1}{d-1}-\frac{d}{d-1}\left(\frac{1}{\bar p}-\frac{1}{2}\right),
	\end{align} 
	with the constants appeared in equivalence are free from $\bar{m}$. 
\end{theorem}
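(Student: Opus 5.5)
We prove the two inequalities in \eqref{4.24} separately. The upper bound uses the boundary interpolant $\overline{\boldsymbol S}_k$ and Theorem~\ref{thm_3.1}. The lower bound uses a bump-function (fooling) argument placed near the boundary.

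\textbf{Upper bound.} Fix $k$ and take $\Y = G_{k,r}\cap\Gamma$, so that $\bar m\asymp 2^{k(d-1)}$. Let $\boldsymbol g\in\G_{\text{data}}$ be given, with an extension $\boldsymbol v\in U(\overline{\B})$ satisfying $\operatorname{Tr}\boldsymbol v=\boldsymbol g$. As noted before the theorem, $\overline{\boldsymbol S}_k(\boldsymbol g)=\operatorname{Tr}(\boldsymbol S_k^*(\boldsymbol v))$ depends only on the data $\boldsymbol g(\boldsymbol z_j)$, so it is an admissible recovery map. Continuity of the trace $\boldsymbol H^1(\Omega)\to\boldsymbol H^{1/2}(\Gamma)$ and Theorem~\ref{thm_3.1}, applied with $(s,p)=(\bar s,\bar p)$ (admissible because $0<\bar p\le 2$ and $\bar s>d/\bar p$), give
\begin{align*}
\|\boldsymbol g-\overline{\boldsymbol S}_k(\boldsymbol g)\|_{\boldsymbol H^{1/2}(\Gamma)}\le C\|\boldsymbol v-\boldsymbol S_k^*(\boldsymbol v)\|_{\boldsymbol H^1(\Omega)}\le C\,2^{-k(\bar s-1-\frac d{\bar p}+\frac d2)}.
\end{align*}
Substituting $2^{-k}\asymp \bar m^{-1/(d-1)}$ turns the exponent into $\bar m^{-\beta}$, with $\beta$ exactly as in \eqref{4.24}. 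A general budget $\bar m$ is handled by choosing the largest $k$ with $|G_{k,r}\cap\Gamma|\le\bar m$. Since the Chebyshev radius is at most the worst-case error of any recovery map, this yields $R^*_{\bar m}(\G)\lesssim\bar m^{-\beta}$.

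\textbf{Lower bound.} Fix arbitrary points $\Y$ with $|\Y|=\bar m$. Partition one face of $\Gamma$ into $\asymp 2\bar m$ boundary cubes of side $h\asymp\bar m^{-1/(d-1)}$. At least $\bar m$ of these cubes contain no sample point; call them empty.

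On each empty cube $Q$, place a function $\psi_Q(x)=\psi((x-x_Q)/h)$, where $\psi$ is a smooth bump supported in the half-ball $B(0,1)\cap\{x_d\ge0\}$ and is nonzero on the face. Multiply by a unit vector in one component. Set $\eta=a\sum_{Q\text{ empty}}\psi_Q$. These bumps have disjoint supports, so the standard Besov scaling estimate gives
\begin{align*}
\|\eta\|_{\overline{\B}}\lesssim a\,h^{\bar s-d/\bar p}\,\bar m^{1/\bar p}.
\end{align*}
Choosing $a\asymp h^{-\bar s+d/\bar p}\bar m^{-1/\bar p}$ makes $\|\eta\|_{\overline{\B}}\le1$.

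The traces $\pm\operatorname{Tr}\eta$ both lie in $\G$ and both vanish at every point of $\Y$. They therefore share the same data as $0$, so the Chebyshev radius is at least $\|\operatorname{Tr}\eta\|_{\boldsymbol H^{1/2}(\Gamma)}$. The boundary bumps are disjoint, so their $H^{1/2}$ Gagliardo seminorms are essentially additive: the cross terms are nonnegative contributions. Scaling then gives
\begin{align*}
\|\operatorname{Tr}\eta\|_{\boldsymbol H^{1/2}(\Gamma)}^2\gtrsim \bar m\,a^2h^{(d-1)-1}.
\end{align*}
Inserting $a$ and $h$ yields $\|\operatorname{Tr}\eta\|_{\boldsymbol H^{1/2}(\Gamma)}\gtrsim h^{\bar s-d/\bar p+d/2-1}$, which is again $\asymp\bar m^{-\beta}$. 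The points $\Y$ were arbitrary, so $R^*_{\bar m}(\G)\gtrsim\bar m^{-\beta}$.

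\textbf{Main obstacle.} The delicate step is the lower bound on the $\boldsymbol H^{1/2}(\Gamma)$ norm of the sum of traces. The quickest route uses the pointwise nonnegativity of the Gagliardo integrand: take all bumps with the same sign, drop the cross interactions from below, and keep only the diagonal self-interaction of each bump, which scales as computed. For the Besov upper estimate on $\eta$, I would cite the standard disjoint-bump construction from the optimal recovery literature \cite{BVPS, dahlkeoptimal}, which covers $\bar p<1$ as well.
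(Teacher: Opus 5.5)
Your upper bound is fine; it even tracks the exponent of Theorem~\ref{thm_3.1} more faithfully than the appendix does. The lower bound, however, does not reach $\bar m^{-\beta}$.

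\textbf{The scaling error.} The Besov scaling is inverted. A bump of width $h$ and amplitude $a$ has $\overline{\mathcal B}$-quasinorm $\asymp a\,h^{-(\bar s-d/\bar p)}$: since $\bar s>d/\bar p$, narrowing a bump increases its Besov norm. With your choice $a\asymp h^{-\bar s+d/\bar p}\bar m^{-1/\bar p}$, the function $\eta$ lies far outside $U(\overline{\mathcal B})$. The correct normalization is $a\asymp h^{\bar s-d/\bar p}\bar m^{-1/\bar p}$. With this and $\bar m\asymp h^{-(d-1)}$, your own diagonal estimate yields
\[
\|\operatorname{Tr}\eta\|_{\boldsymbol H^{1/2}(\Gamma)}^2\gtrsim \bar m\,a^2h^{d-2}\asymp h^{2\bar s-2/\bar p-1},
\qquad\text{i.e.}\qquad
\|\operatorname{Tr}\eta\|_{\boldsymbol H^{1/2}(\Gamma)}\gtrsim \bar m^{-\beta-(\frac{1}{\bar p}-\frac{1}{2})},
\]
not $h^{\bar s-d/\bar p+d/2-1}$. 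The step ``inserting $a$ and $h$'' does not produce the claimed exponent with either sign.

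\textbf{Why spreading fails.} For $\bar p<2$ the result misses the target by the factor $\bar m^{-(1/\bar p-1/2)}$. Spreading the unit Besov budget over $\asymp\bar m$ bumps costs $\bar m^{-1/\bar p}$ in amplitude. The quadratic $H^{1/2}$ seminorm only gains back $\bar m^{1/2}$. Spreading bumps is the right device only when the integrability index of the class is at least that of the target norm (compare the case $p\ge\gamma$ in Appendix~\ref{App_A}). Here $\bar p\le 2$, so your construction gives the rate only for $\bar p=2$.

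\textbf{The missing idea.} Concentrate the whole budget in a single empty boundary cell, as the paper does.
\begin{itemize}
\item Choose one cell $\overline J$ of side $h=2^{-k}$ on the face $\{x_1=0\}$ that contains no sample point.
\item Take $\boldsymbol v=M^{-1}\boldsymbol\varphi_J$, a bump of amplitude $h^{\bar s-d/\bar p}$, positioned so that the face cuts through its middle.
\item Then $\|\boldsymbol v\|_{\overline{\mathcal B}}\le 1$ and $\operatorname{Tr}\boldsymbol v$ vanishes on $\mathcal Y$.
\item By scaling, $|\operatorname{Tr}\boldsymbol v|_{\boldsymbol H^{1/2}(\Gamma)}\asymp h^{\bar s-d/\bar p}\,h^{(d-2)/2}=h^{\bar s-d/\bar p+d/2-1}\asymp\bar m^{-\beta}$, with no cross terms to handle.
\end{itemize}
A minor point as well: a half-ball of radius $h$ centred in a cell of side $h$ protrudes into neighbouring cells, which may contain sample points. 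The support must be shrunk to lie inside the cell, or the traces need not vanish on $\mathcal Y$.
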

\begin{proof}
	The complete proof can be found in the Appendix~\ref{App_B}.
\end{proof}
\subsubsection{Optimal recovery of \texorpdfstring{$\boldsymbol{u}$}{u} and $p$}
In this section we study optimal recovery of solution $\boldsymbol u$ and pressure $p$ of the boundary value problem
\begin{align}
	\begin{cases}
		- \nu \Delta \boldsymbol{u}(\boldsymbol{x}_i) + (\boldsymbol{\beta} \cdot \nabla) \boldsymbol{u}(\boldsymbol{x}_i) + \nabla p(\boldsymbol{x}_i) + \sigma \boldsymbol{u}(\boldsymbol{x}_i)&=\boldsymbol{f}_i \quad \text{in } \Omega, \\
		\hspace{3.6cm}\nabla \cdot \boldsymbol{u}(\boldsymbol{x}_i) &= 0\quad \text{in } \Omega, \\
		\hspace{4.1cm}\boldsymbol{u}(\boldsymbol{z}_i) &= \boldsymbol{g}_i \quad \text{on } \Gamma,
	\end{cases}
\end{align}
where the interior data $\{\boldsymbol f(\boldsymbol x_i)\}$ and boundary data $\{\boldsymbol g(\boldsymbol z_j)\}$ are observed at $\X$ and $\Y$ correspondingly for total budget $m=\tilde{m}+\bar{m}$. For the model classes $\F = U(\boldsymbol B^s_{pq}(\Omega))$ and $\G = \operatorname{Tr}(U(\boldsymbol B^{\bar s}_{\bar p \bar q}))$ with $s>d/p,\, p,q,\bar q\in (0,\infty]\, \bar s>d/\bar p$ and  $\bar p\in(0,2]$, the solution space is given by
\begin{align*}
	\U := \{( \boldsymbol{u},p)\in C(\overline{\Omega})^{d+1}\mid \text{$( \boldsymbol{u},p)$ solves \eqref{1} for } f\in\F, g\in\G\}.
\end{align*}
\begin{theorem}
	For $\Omega = (0,1)^d$ and $\F, \G$ and $\U$ as defined above. Then 
	\begin{itemize}
		\item If $d\geq3$ or $d=2$ with $p>1$, the following optimal recovery rates hold
		\begin{align}
			R^*_{m}(\U)_{X} \asymp m^{-\min\{\alpha,\beta\}},  \quad m\geq 2,
		\end{align}
		\item If $d=2$ and $p\in (0,1]$, the following optimal recovery rate hold
		\begin{align}
			m^{-\min\{\alpha,\beta\}} \lesssim R^*_m(\U)_{X}\lesssim \log(m) m^{-\alpha} + m^{-\beta},\quad m\geq 2,
		\end{align}
	\end{itemize}
	under the equivalence constants independent of $m$ and $X := \boldsymbol{H}^1(\Omega)\times L^2(\Omega)$.
\end{theorem}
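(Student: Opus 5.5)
The plan is to reduce optimal recovery of $(\boldsymbol u,p)$ to optimal recovery of the data $(\boldsymbol f,\boldsymbol g)$, using the two-sided stability estimate \eqref{mainestimate}. Here $\alpha=\alpha_{-1}$ is the exponent of Theorem~\ref{thm4.1} and $\beta$ is the exponent of Theorem~\ref{thm4.2}. Because the Oseen problem \eqref{1} is linear, the solution map $(\boldsymbol f,\boldsymbol g)\mapsto(\boldsymbol u,p)$ is a linear isomorphism between $\boldsymbol H^{-1}(\Omega)\times\boldsymbol H^{1/2}(\Gamma)$ (restricted to compatible boundary data) and $X=\boldsymbol H^1(\Omega)\times L^2_0(\Omega)$. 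Consequently, for two admissible solutions with data $(\boldsymbol f,\boldsymbol g)$ and $(\boldsymbol f',\boldsymbol g')$,
\[
\|\boldsymbol u-\boldsymbol u'\|_{\boldsymbol H^1(\Omega)}+\|p-p'\|_{L^2(\Omega)}\asymp \|\boldsymbol f-\boldsymbol f'\|_{\boldsymbol H^{-1}(\Omega)}+\|\boldsymbol g-\boldsymbol g'\|_{\boldsymbol H^{1/2}(\Gamma)}.
\]
It follows that the diameter of $\U_{\text{data}}$ in $X$ is comparable to the sum of the diameters of $\F_{\text{data}}$ in $\boldsymbol H^{-1}$ and of $\G_{\text{data}}$ in $\boldsymbol H^{1/2}$. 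Since the Chebyshev radius is comparable to the diameter, up to a factor of $2$, the same comparison holds for the radii.

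\textbf{Upper bound.} Given the budget $m$, I would choose $\tilde m\asymp m$ interior points and $\bar m\asymp m$ boundary points from the tensor grids $G_{k,r}$ used in Theorems~\ref{thm4.1} and~\ref{thm4.2}, with $k$ chosen so that $2^{kd}\asymp m$. The candidate recovery is $\hat{\boldsymbol f}=\boldsymbol S_k^*(\boldsymbol f)$, $\hat{\boldsymbol g}=\overline{\boldsymbol S}_k(\boldsymbol g)$, and $(\hat{\boldsymbol u},\hat p)$ the Oseen solution with data $(\hat{\boldsymbol f},\hat{\boldsymbol g})$. One technical point is that $\overline{\boldsymbol S}_k(\boldsymbol g)$ must have zero net flux. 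I would correct it by subtracting a fixed smooth boundary field with unit flux, scaled by the flux of $\hat{\boldsymbol g}$. Since $\boldsymbol g$ itself has zero flux, the scaling factor is bounded by $\|\boldsymbol g-\hat{\boldsymbol g}\|_{L^2(\Gamma)}$, so the correction does not change the rate. Applying the upper stability bound and then Theorems~\ref{thm4.1} and~\ref{thm4.2} gives an error of order $\tilde m^{-\alpha}+\bar m^{-\beta}\lesssim m^{-\min\{\alpha,\beta\}}$. In the case $d=2$, $p\le1$, the extra $\log$ factor from \eqref{4.9} is carried along, which produces $\log(m)m^{-\alpha}+m^{-\beta}$.

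\textbf{Lower bound.} For arbitrary sites $\X,\Y$ with $|\X|+|\Y|=m$, I would build bump perturbations, following the lower-bound constructions behind Theorems~\ref{thm4.1} and~\ref{thm4.2}. First, take $\boldsymbol\eta\in\F$ vanishing on $\X$ with $\|\boldsymbol\eta\|_{\boldsymbol H^{-1}}\gtrsim m^{-\alpha}$, and set $\boldsymbol g=0$. The two data sets $(\pm\boldsymbol\eta,0)$ then agree on all samples, so the corresponding solutions both lie in $\U_{\text{data}}$. Second, I must check that the extra constraints in $\U_{\text{data}}$, namely the pointwise PDE residual and $\nabla\cdot\boldsymbol u(\boldsymbol x_i)=0$, are automatically satisfied; they are, because the functions are exact solutions and $\boldsymbol\eta(\boldsymbol x_i)=0$. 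The lower stability bound then gives $R\gtrsim m^{-\alpha}$. Third, I repeat the argument with a boundary perturbation $\operatorname{Tr}(\boldsymbol v)$ that vanishes on $\Y$ and has zero flux, with $\boldsymbol H^{1/2}$ norm $\gtrsim m^{-\beta}$. Together these give $R\gtrsim m^{-\min\{\alpha,\beta\}}$.

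I expect the main obstacle to be the lower bound. The extremal perturbations must remain inside the model classes while also respecting the compatibility condition $\int_\Gamma\boldsymbol g\cdot\boldsymbol n=0$. I would first try placing the boundary bumps symmetrically, in pairs on a face with opposite signs in the normal component, so that the flux cancels. A related point is that the stability constants depend on $\nu,\boldsymbol\beta,\sigma$ but not on $m$, which is what makes all the equivalences uniform in $m$.
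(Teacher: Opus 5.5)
Your proposal is correct and follows essentially the same route as the paper. The two-sided stability estimate \eqref{mainestimate} reduces $R^*(\U,\X,\Y)_X$ to $R^*(\F,\X)_{\boldsymbol H^{-1}(\Omega)}+R^*(\G,\Y)_{\boldsymbol H^{1/2}(\Gamma)}$. A balanced split of the budget then gives the upper bound, and data-vanishing bumps (the paper's infimum over allocations $\tilde m+\bar m=m$) give the lower bound.

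Your flux correction and zero-flux boundary perturbations are extra care the paper omits. A purely tangential bump on the face $x_1=0$ already has zero flux, so you do not need the paired opposite-sign bumps.

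One small fix: the boundary grid needs its own level $k'$ with $2^{k'(d-1)}\asymp m$. A single $k$ with $2^{kd}\asymp m$ gives only $\asymp m^{(d-1)/d}$ boundary nodes, and hence only the weaker rate $m^{-(d-1)\beta/d}$.
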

\begin{proof}
	Let two admissible solutions $(\boldsymbol{u}_1,p_1), (\boldsymbol{u}_2,p_2)\in\U$ corresponds to the data $(\boldsymbol{f}_1,\boldsymbol{g}_1)$ and $(\boldsymbol{f}_2,\boldsymbol{g}_2)$. The stability estimate \eqref{mainestimate} yields
	\begin{align}
		\|\boldsymbol{u}_1-\boldsymbol{u}_2\|_{\boldsymbol{H}^1(\Omega)} + \|p_1-p_2\|_{L^2(\Omega)} \asymp \|\boldsymbol{f}_1-\boldsymbol{f}_2\|_{\boldsymbol{H}^{-1}(\Omega)} + \|\boldsymbol{g}_1-\boldsymbol{g}_2\|_{\boldsymbol{H}^{1/2}(\Gamma)}.
	\end{align}
	Hence for fixed sampling sets,
	\begin{align}
		R^*(\U,\X,\Y)_{X} \asymp R^*(\F,\X)_{\boldsymbol{H}^{-1}(\Omega)} + R^*(\G,\Y)_{\boldsymbol{H}^{1/2} (\Gamma)}.
	\end{align}
	After splitting the budget evenly $\tilde m = \bar m = m/2$ and applying the recovery rate for $\F$ and $\G$ from Theorem~\ref{thm4.1} and Theorem~\ref{thm4.2}, we get
	\begin{align*}
		R^*_m(\U) \lesssim m^{-\alpha} + m^{-\beta}\lesssim m^{-\min\{\alpha,\beta\}} \quad \text{for } \ d\geq 3,
	\end{align*}
	and for $d=2$ and $p\leq 1$, we get recovery rate $R^*_m(\U)\lesssim \log (m)m^{-\alpha} + m^{-\beta}$. Conversely, the right-hand side of same theorems for any allocation $m = \tilde m + \bar m$ gives a lower bound
	\begin{align*}
		R^*_m(\U)&\gtrsim \inf_{m = |\X| + |\Y|} \left( R^*(\F,\X) + R^*(\G,\Y)\right)\\
		&\gtrsim \inf_{m=\tilde m+\bar m} \left(\tilde{m}^{-\alpha} + \bar{m}^{-\beta}\right)\gtrsim m^{-\min\{\alpha,\beta\}}.
	\end{align*}
	The proof is completed by combining the upper and lower bounds.
\end{proof}
\subsubsection{Final optimal recovery summary}\label{sec_4.4}
Optimal recovery analysis shows that many Besov model classes for $\boldsymbol{f},\boldsymbol{g}, \boldsymbol{u}$ and $p$ yield the same optimal recovery rates. Thus, only the largest classes attaining these rates are of primary relevance.
\subsection*{Largest model classes for $\F$}
For $d\geq 3$ or $d=2$ with $p>1$, all classes corresponding to $\boldsymbol{f}$ that achieve the recovery rate $m^{-s/d}$ in $\boldsymbol{H}^{-1}(\Omega)$ are contained in $\F = U(\boldsymbol{B}^s_{p,\infty}(\Omega))$, with $p\geq \gamma$ and $s>d/p$. In case of $d=2, p\in (0,1]$, the largest model class $\F = U(\boldsymbol{B}^s_{1,\infty}(\Omega))$ with $s>2$ satisfies
\begin{align*}
	m^{-s/2}\lesssim R^*_m(\F)_{\boldsymbol{H}^{-1}(\Omega)} \lesssim (1+\log(m))m^{-s/2}.
\end{align*}

\subsection*{Largest model classes for $\G$} 
All the Besov model classes with recovery rate $m^{-(s-1)/(d-1)}$ in $H^{1/2}(\Gamma)$ lie inside $\G = \operatorname{Tr} (U(\boldsymbol{B}^{\bar s}_{2,\infty}(\Omega)))$ with $\bar s>d/2$, and satisfies
\begin{align*}
	R^*_m(\G)_{\boldsymbol{H}^{1/2}(\Gamma)} \asymp m^{-(\bar s-1)/(d-1)}.
\end{align*}

\subsection*{Model class for solutions \texorpdfstring{$\boldsymbol{u}$}{u} and $p$}
Using the above maximal model classes for $\F$ and $\G$, the PDE solution $(\boldsymbol{u},p)$ with $\tilde{m}=\bar{m} = m/2$ satisfies
\begin{align*}
	R^*_m(\U)_X \asymp m^{-\min\{s/d, (\bar{s}-1)/(d-1)\}}.
\end{align*}

\subsection{Discrete residual control}
In order to make theoretical loss functional $\L_T$ computationally feasible, we introduce corresponding discrete loss functional $\L^*$ based on sampled data of $\boldsymbol f$ and $\boldsymbol g$. Under the model class assumptions $\boldsymbol f\in\F= U(\B)$, $\boldsymbol g\in\G$, the exact solution $(\boldsymbol u,p$) belongs to a compact solution class $\U$ with optimal recovery rate is  $\{\tilde{m}^{-s/d}, \bar{m}^{-(s-1)/(d-1)}\}$. For any $(\boldsymbol v,q)$, we define the model norm as $\|(\boldsymbol v,q)\|_{\U} =\max\{\|- \nu \Delta \boldsymbol{v} + (\boldsymbol{\beta} \cdot \nabla) \boldsymbol{v} + \sigma \boldsymbol{v} + \nabla q\|_{\B}, \|\operatorname{Tr}(\boldsymbol v)\|_{\operatorname{Tr}(\overline{\B})}\}$. The following result establishes control of the approximation error by the discrete residual.

\begin{theorem}\label{thm_3.5}
	Let $(\boldsymbol u,p)$ be the solution of the Oseen problem~\eqref{1}. Under the above assumptions over $\boldsymbol f, \boldsymbol g$ for $d=2,3,$ and for any $(\boldsymbol v,q)\in \boldsymbol{H}^{1}(\Omega) \times \boldsymbol{L}^{2}_0(\Omega)$, the following estimate holds 
	\begin{align}
		\|\boldsymbol u-\boldsymbol v\|^2_{\boldsymbol H^1(\Omega)} + \|p-q\|^2_{L^2(\Omega)} \lesssim \L^*(v,q) + (\|(\boldsymbol v,q)\|^2_{\U}+1)\mathfrak{R}_U(\tilde m, \bar m),
	\end{align}
	where the implicit constant is independent of $\boldsymbol u, \boldsymbol v, \tilde m, \bar m$ and the residual term is given by
	\[\mathfrak{R}_U(\tilde m, \bar m) = \max\{\tilde{m}^{-2s/d},\bar{m}^{-2(\bar s-1)/(d-1)}\}.\]
\end{theorem}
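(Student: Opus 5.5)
We combine the continuous stability equivalence \eqref{finalest1_rewritten} with the recovery results of Theorems~\ref{thm4.1} and~\ref{thm4.2}, applied to the residuals rather than to the data themselves. Set $\boldsymbol r:=-\nu\Delta\boldsymbol v+(\boldsymbol\beta\cdot\nabla)\boldsymbol v+\nabla q+\sigma\boldsymbol v-\boldsymbol f$ and $\boldsymbol e:=\boldsymbol g-\operatorname{Tr}(\boldsymbol v)$. Squaring \eqref{finalest1_rewritten} gives
\begin{align*}
\|\boldsymbol u-\boldsymbol v\|^2_{\boldsymbol H^1(\Omega)}+\|p-q\|^2_{L^2(\Omega)}\lesssim \|\boldsymbol r\|^2_{\boldsymbol H^{-1}(\Omega)}+\|\nabla\cdot\boldsymbol v\|^2_{L^2(\Omega)}+\|\boldsymbol e\|^2_{\boldsymbol H^{1/2}(\Gamma)} .
\end{align*}
Each of the three continuous norms must then be bounded by its discrete counterpart in $\L^*$ plus a consistency error.

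\textbf{Residual term.} By the definition of $\|(\boldsymbol v,q)\|_{\U}$ and the triangle inequality, $\|\boldsymbol r\|_{\B}\le \|(\boldsymbol v,q)\|_{\U}+1$. Hence $\boldsymbol r/(\|(\boldsymbol v,q)\|_{\U}+1)\in\F$. Use the piecewise polynomial interpolant $\boldsymbol S_k^*$ built on the grid $\X$ with $\tilde m\asymp 2^{kd}$, and split $\boldsymbol r=(\boldsymbol r-\boldsymbol S_k^*\boldsymbol r)+\boldsymbol S_k^*\boldsymbol r$.
\begin{itemize}
\item The first piece is controlled in $\boldsymbol H^{-1}$ by the upper bound from the proof of Theorem~\ref{thm4.1}, giving $(\|(\boldsymbol v,q)\|_{\U}+1)\tilde m^{-s/d}$. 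We work in the largest model class of Section~\ref{sec_4.4}, where $\alpha_{-1}=s/d$.
\item For the second piece, $\boldsymbol S_k^*\boldsymbol r$ lies in a finite-dimensional piecewise polynomial space. The embedding $L^\gamma\hookrightarrow \boldsymbol H^{-1}$ and norm equivalence on polynomials over each simplex (uniform by scaling) give
\begin{align*}
\|\boldsymbol S_k^*\boldsymbol r\|_{\boldsymbol H^{-1}}\lesssim \|\boldsymbol S_k^*\boldsymbol r\|_{L^\gamma}\asymp \Bigl(\tfrac{1}{\tilde m}\sum_i|\boldsymbol r(\boldsymbol x_i)|^\gamma\Bigr)^{1/\gamma}.
\end{align*}
This is exactly the first term of $\L^*$ (componentwise).
\end{itemize}

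\textbf{Divergence term.} The same argument handles $\|\nabla\cdot\boldsymbol v\|_{L^2}$ via its discrete $\ell^2$ average, with an interpolation error absorbed into the same rate. This needs $\nabla\cdot\boldsymbol v$ to lie in a comparable Besov ball. I would fold this into the model-norm assumption, or note that the regularity of $\boldsymbol v$ controls it.

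\textbf{Boundary term.} Proceed analogously: $\boldsymbol e/(\|(\boldsymbol v,q)\|_{\U}+1)\in\G$, and write $\boldsymbol e=(\boldsymbol e-\overline{\boldsymbol S}_k\boldsymbol e)+\overline{\boldsymbol S}_k\boldsymbol e$.
\begin{itemize}
\item Theorem~\ref{thm4.2} bounds the first piece in $\boldsymbol H^{1/2}(\Gamma)$ by $\bar m^{-(\bar s-1)/(d-1)}$ times the model norm, taking $\bar p=2$.
\item For the second piece, the $\boldsymbol H^{1/2}(\Gamma)$ norm of a piecewise polynomial on the boundary mesh is bounded by $L^2$ plus the Gagliardo double integral. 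These are in turn bounded by their quadrature analogues, namely the last two sums in $\L^*$. Here the kernel $|\boldsymbol z_i-\boldsymbol z_j|^{-d}$ matches the boundary dimension $d-1$ with smoothness $1/2$.
\end{itemize}

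\textbf{Conclusion and main obstacle.} Squaring and summing the three bounds gives the claim with $\mathfrak{R}_U$. I expect the main obstacle to be this last discrete-to-continuous equivalence for the Gagliardo seminorm of piecewise polynomials. The near-diagonal pairs, i.e.\ points within the same or adjacent boundary elements, need a local inverse-estimate argument. The far-field pairs need a Riemann-sum comparison with uniform constants. I would handle it by splitting the double sum into element pairs and using scaling to a reference patch.
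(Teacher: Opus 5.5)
Your skeleton is the paper's. You square \eqref{finalest1_rewritten}, then bound each continuous residual norm by its discrete counterpart in $\L^*$ plus a model norm times an optimal recovery rate. The paper simply embeds $L^\gamma(\Omega)\hookrightarrow H^{-1}(\Omega)$ and cites \cite[Lemma~6.1, Theorem~6.4]{BVPS} for the discrete--continuous comparisons. So the Gagliardo-sum step you single out as the main obstacle is exactly what the paper outsources. Your near/far splitting does work there: on a boundary element of size $h$, both the continuous and the discrete local contributions scale like $h^{d-2}$. Splitting through $\boldsymbol S_k^*$, so that the embedding is applied only to the piecewise polynomial $\boldsymbol S_k^*\boldsymbol r$, is a mild refinement that matters below.

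\textbf{The divergence term is a genuine gap, and only your first option closes it.} The model norm $\|(\boldsymbol v,q)\|_{\U}$ sees only $-\nu\Delta\boldsymbol v+(\boldsymbol\beta\cdot\nabla)\boldsymbol v+\sigma\boldsymbol v+\nabla q$ and $\operatorname{Tr}(\boldsymbol v)$, so it does not control $\nabla\cdot\boldsymbol v$ at all. Here is a counterexample. Take constant $\nu,\boldsymbol\beta,\sigma$ and a nonzero $\psi\in C_c^\infty(\Omega)$ with $\int_\Omega\psi\,dx=0$ whose support avoids the finitely many points of $\X$. Set $\boldsymbol v=\boldsymbol u+\lambda\nabla\psi$ and $q=p+\lambda(\nu\Delta\psi-\boldsymbol\beta\cdot\nabla\psi-\sigma\psi)$. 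Since $(\boldsymbol\beta\cdot\nabla)\nabla\psi=\nabla(\boldsymbol\beta\cdot\nabla\psi)$, one gets $-\nu\Delta\boldsymbol v+(\boldsymbol\beta\cdot\nabla)\boldsymbol v+\sigma\boldsymbol v+\nabla q=\boldsymbol f$ identically. Moreover $q\in L^2_0(\Omega)$, $\boldsymbol v=\boldsymbol g$ on $\Gamma$, and $\nabla\cdot\boldsymbol v=\lambda\Delta\psi$ vanishes on $\X$. Hence $\L^*(\boldsymbol v,q)=0$ and $\|(\boldsymbol v,q)\|_{\U}\le 1$, while the left-hand side is at least $\lambda^2\|\nabla\psi\|^2_{\boldsymbol H^1(\Omega)}$, which is unbounded in $\lambda$.

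So the estimate as stated is false, and ``the regularity of $\boldsymbol v$ controls it'' cannot be made to work. The paper's proof has the same hole. It produces $\|\nabla\cdot\boldsymbol v\|^{2}_{\B}\tilde m^{-2s/d}$ and absorbs it into $(\|(\boldsymbol v,q)\|^2_{\U}+1)\mathfrak{R}_U(\tilde m,\bar m)$ without justification. Enlarging the model norm by a Besov norm of $\nabla\cdot\boldsymbol v$ is therefore necessary, not optional. That Besov class must also have integrability index at least $2$. For $\nabla\cdot\boldsymbol v\in B^s_{pq}$ with $p<2$, the $L^2$ interpolation error only decays like $\tilde m^{-s/d+1/p-1/2}$, not $\tilde m^{-s/d}$.

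\textbf{A smaller point, also shared with the paper.} For $d=2$ the relation $1/\gamma=1/2+1/d$ gives $\gamma=1$, and $L^1(\Omega)$ does not embed into $H^{-1}(\Omega)$ in two dimensions. You only need the bound on the piecewise polynomial $\boldsymbol S_k^*\boldsymbol r$. There, a quasi-interpolant, an inverse estimate and the two-dimensional discrete Sobolev inequality give $\|\boldsymbol S_k^*\boldsymbol r\|_{\boldsymbol H^{-1}(\Omega)}\lesssim(1+\log\tilde m)^{1/2}\|\boldsymbol S_k^*\boldsymbol r\|_{L^1(\Omega)}$. So for $d=2$ your argument yields the estimate only with a factor $\log\tilde m$ in front of the first term of $\L^*$, unless some fixed $\gamma>1$ is used in $\L^*$.
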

\begin{proof}
	Using the stability estimate~\eqref{finalest1_rewritten}, we get
	\begin{align}
		\|\boldsymbol u-\boldsymbol v\|^2_{\boldsymbol H^1(\Omega)} + \|p-q\|^2_{L^2(\Omega)} \lesssim \|R(\boldsymbol v,q)\|^2_{\boldsymbol H^{-1}(\Omega)} + \|\nabla\cdot \boldsymbol v\|^2_{L^2(\Omega)} + \|\boldsymbol g-\boldsymbol v\|^2_{\boldsymbol H^{1/2}(\Gamma)},
	\end{align}
	where $R(\boldsymbol v,q): = - \nu \Delta \boldsymbol{v} + (\boldsymbol{\beta} \cdot \nabla) \boldsymbol{v} +\nabla q +  \sigma \boldsymbol{v} -\boldsymbol{f}$. By invoking the norm equivalence between continuous and discrete norms for $H^{-1}(\Omega), L^2(\Omega)$ and $H^{1/2}(\Gamma)$-norm (see~\cite[Lemma~6.1 and Theorem~6.4]{BVPS}), together with the Sobolev embedding $L^\gamma(\Omega)\hookrightarrow H^{-1}(\Omega)$ for $1/\gamma = 1/d +1/2$, it follows
	\begin{align*}
		\|\boldsymbol u-\boldsymbol v\|^2_{\boldsymbol H^1(\Omega)} &+ \|p-q\|^2_{L^2(\Omega)} \lesssim \|R(\boldsymbol v,q)\|^2_{\boldsymbol H^{-1}(\Omega)} + \|\nabla\cdot \boldsymbol v\|^2_{L^2(\Omega)} + \|\boldsymbol g-\boldsymbol v\|^2_{\boldsymbol H^{1/2}(\Gamma)}\\
		&\lesssim \|R(\boldsymbol v,q)\|^{2,*}_{\boldsymbol L^{\gamma}(\Omega)} + \|\nabla\cdot \boldsymbol v\|^{2,*}_{L^2(\Omega)} + \|\boldsymbol g-\boldsymbol v\|^{2,*}_{\boldsymbol H^{1/2}(\Gamma)}\\		
		&  + \|R(\boldsymbol v,q)\|^{2}_{\B}\tilde{m}^{-\frac{2s}{d}} + \|\nabla\cdot \boldsymbol v\|^{2,*}_{\B}\tilde{m}^{-\frac{2s}{d}}  + \|\boldsymbol g-\boldsymbol v\|^{2,*}_{\operatorname{Tr}(\overline{\B})}\bar{m}^{-\frac{2\bar s-1)}{(d-1)}}\\
		&\lesssim	\L^*(v,q) + (\|(\boldsymbol v,q)\|^2_{\U}+1)\mathfrak{R}_U(\tilde m, \bar m).
	\end{align*}
	In the last step, we conclude the proof by using the bounds for norms of the data $f\in U(\B)$ and $g\in \operatorname{Tr}(U(\overline{\B}))$.
\end{proof}

\section{Model problem with divergence-free constraint}\label{Sec_4}
In this section, we restrict our attention to the two-dimensional case, i.e., $d=2$. In many applications involving incompressible flows, it is essential to enforce the divergence-free constraint on the velocity field exactly, rather than only in a weak or penalized sense. In this section, we present a self-contained formulation of the divergence-free model problem, introduce the associated loss functional, and derive the corresponding optimal recovery results. 

\subsection{Functional setting for the divergence-free Oseen problem}
Let $\Omega=(0,1)^2$ and consider the boundary value problem~\eqref{1}, where the interior observations $\{\boldsymbol f(\boldsymbol x_i)\}_{i=1}^{\tilde m}$ and boundary observations $\{\boldsymbol g(\boldsymbol z_j)\}_{j=1}^{\bar m}$ are sampled at data sites $\X$ and $\Y$, with total sampling budget $m=\tilde m+\bar m$. As before, we assume the model classes
\[
{\F} = U( \boldsymbol{B}^s_{pq}(\Omega)), \qquad \G = \operatorname{Tr}(U( \boldsymbol{B}^{\bar s}_{\bar p\bar q}(\Omega))),
\]
with $s>d/p$, $\bar{s}>d/\bar{p}$, $p,q,\bar q \in (0,\infty]$, $\bar p \in (0,2]$, so that $ \boldsymbol{f} \in {\F}$ and $\boldsymbol{g}\in \G$ are continuous and the PDE \eqref{1} admits a unique solution $(\boldsymbol{u}, p)\in \boldsymbol{V}_0(\Omega) \times L^2_0(\Omega)$ satisfying the standard stability estimate
\begin{align}
	\|\boldsymbol{u}_1-\boldsymbol{u}_2\|_{\boldsymbol{H}^1(\Omega)} + \|p_1-p_2\|_{L^2(\Omega)}
	\asymp \|\boldsymbol{f}_1-\boldsymbol{f}_2\|_{\boldsymbol{H}^{-1}(\Omega)} + \|\boldsymbol{g}_1-\boldsymbol{g}_2\|_{\boldsymbol{H}^{1/2}(\Gamma)} .
	\label{eq:DF-stability}
\end{align}
Thus, we define the corresponding solution class
\[
\U_{\mathrm{div}} := \{(\boldsymbol{u},p)\in C(\overline{\Omega})^{d+1} : (\boldsymbol{u},p)\ \text{solves \eqref{1} for some } \boldsymbol{f}\in{\F},\, \boldsymbol{g}\in \G\}.
\]

\subsection{Loss functional for divergence-free formulation}
Motivated by the stability estimate, we introduce a theoretical loss functional that measures the residual of the momentum equation in $\boldsymbol{H}^{-1}(\Omega)$ together with the boundary data. For admissible $(v,q)\in \boldsymbol{V}_0(\Omega)\times L^2_0(\Omega),$ we define theoretical loss functional
\begin{align}
	\mathcal{L}_{\mathrm{div}}(\boldsymbol{v},q)
	&=
	\bigl\| -\nu\Delta\boldsymbol{v} + (\boldsymbol{\beta}\cdot\nabla)\boldsymbol{v} + \nabla q+  \sigma \boldsymbol{v} - \boldsymbol{f} \bigr\|^2_{\boldsymbol{H}^{-1}(\Omega)}
	+
	\|\boldsymbol{g}-\boldsymbol{v}\|^2_{\boldsymbol{H}^{1/2}(\Gamma)}. 
	\label{eq:DF-loss}
\end{align}
The functional $\mathcal{L}_{\mathrm{div}}$ is non-negative and convex on $\boldsymbol{V}_0(\Omega)\times L^2_0(\Omega)$, so it has a unique minimizer, which is exactly the solution of the divergence-free problem. Hence,
\begin{align*}
	(\boldsymbol{u},p)
	=
	\mathop{\arg\min}_{(\boldsymbol{v},q)\in \boldsymbol{V}_0(\Omega)\times L_0^2(\Omega)}
	\mathcal{L}_{\mathrm{div}}(\boldsymbol{v},q).
\end{align*}
The minimizer of $\mathcal{L}_{\mathrm{div}}$ over $\boldsymbol{V}_0(\Omega)\times L^2_0(\Omega)$ coincides with the exact divergence–free velocity solution and pressure of the Oseen problem. The discrete version follows from replacing the norms by their sampled analogues at data sites $(\X,\Y)$, which leads to divergence-free CPINNs formulation.

\subsection{Optimal recovery rate for the divergence-free formulation}
In the divergence free formulation, the divergence term does not appear explicitly in loss, 
and the velocity field is determined with pressure through the forcing term $\boldsymbol f$ 
and the boundary data $\boldsymbol g$.  Hence the recovery accuracy of $(\boldsymbol u, p)$ depends only on the information content of the data classes $\F$ and $\G$, as stated in the theorem below.
\begin{theorem}\label{thm5.2}
	For the divergence-free formulation, model problem \eqref{1} is considered with model classes $\F$ and $\G$ as above. The optimal recovery rate of the solution class satisfies
	\[
	R^*_m(\U_{\mathrm{div}})_{H^1(\Omega)}
	\asymp m^{-\min\{\alpha,\beta\}}, \qquad m\ge 2,
	\]
	for $d=2$ with $p>1$.  
	When $d=2$ and $p\in(0,1]$,
	\[
	m^{-\min\{\alpha,\beta\}}
	\lesssim R^*_m(\U_{\mathrm{div}})
	\lesssim \log(m)\,m^{-\alpha} + m^{-\beta}.
	\]
	All equivalence constants are independent of $m$.
\end{theorem}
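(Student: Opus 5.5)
The plan is to mirror the proof of the optimal recovery theorem for $\U$ in Section~\ref{Sec_3}, specialised to $d=2$ and to the solution class $\U_{\mathrm{div}}$. The key observation is that $\U_{\mathrm{div}}$ coincides with $\U$ as a set. The exact velocity of \eqref{1} is automatically in $\boldsymbol{V}_0(\Omega)$, so restricting to divergence-free fields removes no admissible solution. The stability estimate \eqref{eq:DF-stability} is therefore available on all pairs in $\U_{\mathrm{div}}$ with no divergence residual. The information map is $(\boldsymbol f,\boldsymbol g)\mapsto(\boldsymbol f|_{\X},\boldsymbol g|_{\Y})$, and the solution operator $(\boldsymbol f,\boldsymbol g)\mapsto(\boldsymbol u,p)$ is a linear isomorphism onto its image with two-sided bounds. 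Hence it transports Chebyshev radii up to constants.

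\textbf{Step 1 (reduction to data recovery).} Fix sites $\X,\Y$ and admissible data. I will take two elements of $\U_{\mathrm{div}}$ sharing the same samples. By \eqref{eq:DF-stability}, their $\boldsymbol H^1\times L^2$ distance is equivalent to $\|\boldsymbol f_1-\boldsymbol f_2\|_{\boldsymbol H^{-1}}+\|\boldsymbol g_1-\boldsymbol g_2\|_{\boldsymbol H^{1/2}(\Gamma)}$. Since $\F_{\mathrm{data}}\times\G_{\mathrm{data}}$ is a product set, I expect
\[
R^*(\U_{\mathrm{div}},\X,\Y)\asymp R^*(\F,\X)_{\boldsymbol H^{-1}(\Omega)}+R^*(\G,\Y)_{\boldsymbol H^{1/2}(\Gamma)}.
\]
\emph{Upper direction.} Apply the solution operator to the pair of Chebyshev centres and use the upper stability bound.
\emph{Lower direction.} Perturb only $\boldsymbol f$ within $\F_{\mathrm{data}}$ (or only $\boldsymbol g$), then use the lower stability bound. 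Symmetry of the model classes lets me realise the radius up to a factor $2$ through diameters.

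\textbf{Step 2 (upper bound).} Split the budget as $\tilde m=\bar m=m/2$. For $d=2$, $p>1$, Theorem~\ref{thm4.1} gives $\tilde m^{-\alpha}$; for $p\le1$ it gives $\tilde m^{-\alpha}\log\tilde m$. Theorem~\ref{thm4.2} gives $\bar m^{-\beta}$. Adding these yields $m^{-\min\{\alpha,\beta\}}$, respectively $\log(m)m^{-\alpha}+m^{-\beta}$.

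\textbf{Step 3 (lower bound).} For an arbitrary split $m=\tilde m+\bar m$, one of $\tilde m,\bar m$ is at least $m/2$ and the other is at most $m$. The lower bounds in Theorems~\ref{thm4.1}--\ref{thm4.2} give $\tilde m^{-\alpha}+\bar m^{-\beta}\ge m^{-\alpha}+m^{-\beta}\gtrsim m^{-\min\{\alpha,\beta\}}$. Taking the infimum over site sets completes the argument.

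\textbf{Main obstacle.} The delicate point is the lower direction in Step 1. The fooling functions used in Theorem~\ref{thm4.1}, bumps vanishing on $\X$, must produce solutions that stay in $\U_{\mathrm{div}}$ and whose $\boldsymbol H^1\times L^2$ separation is comparable to the $\boldsymbol H^{-1}$ size of the bump. A gradient-type bump $\boldsymbol f=\nabla\phi$ would be absorbed entirely by the pressure. The lower stability inequality in \eqref{mainestimate} still guarantees a combined separation, which is enough here because the norm is on $X=\boldsymbol H^1\times L^2$. I would nonetheless check that the two-sided estimate is genuinely applied to the pair, not to the velocity alone, and note that the claim is stated in the $H^1$-type norm of the full solution pair.
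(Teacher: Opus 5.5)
Your proposal is correct and is essentially the paper's argument. The stability estimate \eqref{eq:DF-stability} reduces the problem to $R^*(\F,\X)_{\boldsymbol H^{-1}(\Omega)}+R^*(\G,\Y)_{\boldsymbol H^{1/2}(\Gamma)}$, the even split $\tilde m=\bar m=m/2$ with Theorems~\ref{thm4.1}--\ref{thm4.2} gives the upper bound, and the lower bounds of those theorems over arbitrary allocations give the lower bound. Your remarks that $\U_{\mathrm{div}}=\U$ and that the two-sided bound must be applied to the pair $(\boldsymbol u,p)$ make explicit what the paper leaves implicit.

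One detail is missing in the lower direction for $\boldsymbol g$. The fooling trace must have zero flux, $\int_\Gamma\boldsymbol g\cdot\boldsymbol n\,ds=0$, to be admissible data for the divergence-free problem. So keep only the tangential component of the bump from Appendix~\ref{App_B}; the paper overlooks this as well.
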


\begin{proof}
	Let the total sampling budget be $m=\tilde m+\bar m$.  
	Using the stability estimate \eqref{eq:DF-stability} and the optimal recovery rates established for $\F$ and $\G$ in Theorems~\ref{thm4.1}-\ref{thm4.2}, we obtain
	\[
	R^*(\U_{\mathrm{div}},\X,\Y)_{ \boldsymbol{H}^1(\Omega)}
	\asymp
	R^*(\F,\X)_{ \boldsymbol{H}^{-1}(\Omega)} + R^*(\G,\Y)_{ \boldsymbol{H}^{1/2}(\Gamma)} .
	\]
	Splitting the budget evenly, $\tilde m = \bar m = m/2$, yields the upper bound
	\[
	R^*_m(\U_{\mathrm{div}}) \lesssim m^{-\alpha} + m^{-\beta}
	\lesssim m^{-\min\{\alpha,\beta\}}
	\quad \text{for } p>1.
	\]
	If $p\le 1$, we obtain
	\[
	R^*_m(\U_{\mathrm{div}})
	\lesssim \log(m)\,m^{-\alpha} + m^{-\beta}.
	\]
	For the lower bound, optimizing over all allocations $\tilde m+\bar m = m$ gives
	\[
	R^*_m(\U_{\mathrm{div}})\gtrsim \inf_{m = |\X| + |\Y|} \left( R^*(\F,\X) + R^*(\G,\Y)\right)\
	\gtrsim m^{-\min\{\alpha,\beta\}}.
	\]
\end{proof}

\subsection{Largest model classes for recovery in the divergence-free formulation}

Since the velocity and pressure terms appears in a solution class $\U_{div}$, the optimal rate is dictated by the largest model classes for $\F$ and $\G$ as
\[
R^*_m(\U_{\mathrm{div}})_{H^1(\Omega)}
\asymp m^{-\min\{s/d,\;(\bar{s}-1)/(d-1)\}}.
\]
Thus, the same maximal Besov classes identified in Section~\ref{sec_4.4} yield the best possible recovery rate for the divergence-free velocity and pressure field.

\begin{theorem}[Discrete residual control for divergence-free formulation]
	Let $\boldsymbol u$ be the solution of the divergence-free Oseen problem. 
	Under the assumptions on the data $\boldsymbol f\in U(\B), \boldsymbol g\in \operatorname{Tr}(U(\overline{\B}))$ for $d=2$, and for any $(\boldsymbol v,q )\in \boldsymbol V_0(\Omega)\times L^2_0(\Omega)$ the following estimate holds 
	\begin{align}
		\|\boldsymbol u-\boldsymbol v\|^2_{\boldsymbol H^1(\Omega)} + \|p-q\|^2_{L^2(\Omega)}
		\lesssim \L^*_{\mathrm{div}}(\boldsymbol v, q) 
		+ (\|(\boldsymbol v,q)\|^2_{\U}+1)\mathfrak{R}_U(\tilde m, \bar m),
	\end{align}
	where the constant is independent of $\boldsymbol u, \boldsymbol v, \tilde m, \bar m$.
\end{theorem}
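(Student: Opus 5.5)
We will follow the proof of Theorem~\ref{thm_3.5}, with one simplification: for $\boldsymbol v\in\boldsymbol V_0(\Omega)$ we have $\nabla\cdot\boldsymbol v=0$ identically, so the divergence residual drops out of both the continuous estimate and the discrete loss. We take the discrete loss $\L^*_{\mathrm{div}}$ to be $\L^*$ without its divergence term. That is, $\L^*_{\mathrm{div}}$ consists of the $\ell^\gamma$-type sampled momentum residual at the points of $\X$, plus the sampled $L^2$ and Gagliardo-type $H^{1/2}$ boundary terms at the points of $\Y$.

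\textbf{Step 1 (stability).} For $(\boldsymbol v,q)\in\boldsymbol V_0(\Omega)\times L^2_0(\Omega)$, we apply the equivalence \eqref{finalest1_rewritten} with $\nabla\cdot\boldsymbol v=0$. Equivalently, we apply \eqref{eq:DF-stability} to the pair $(\boldsymbol v,q)$, viewed as the exact solution for the data $(R(\boldsymbol v,q)+\boldsymbol f,\operatorname{Tr}\boldsymbol v)$. This gives
\begin{align*}
\|\boldsymbol u-\boldsymbol v\|^2_{\boldsymbol H^1(\Omega)}+\|p-q\|^2_{L^2(\Omega)}\lesssim \|R(\boldsymbol v,q)\|^2_{\boldsymbol H^{-1}(\Omega)}+\|\boldsymbol g-\boldsymbol v\|^2_{\boldsymbol H^{1/2}(\Gamma)},
\end{align*}
where $R(\boldsymbol v,q)=-\nu\Delta\boldsymbol v+(\boldsymbol\beta\cdot\nabla)\boldsymbol v+\nabla q+\sigma\boldsymbol v-\boldsymbol f$.

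\textbf{Step 2 (interior term).} We use the embedding $L^\gamma(\Omega)\hookrightarrow H^{-1}(\Omega)$ with $1/\gamma=1/2+1/d$, so $\gamma=1$ when $d=2$. We then bound $\|R\|_{\boldsymbol L^\gamma}$ by its sampled counterpart using the continuous/discrete norm equivalence of \cite[Lemma~6.1]{BVPS}. That equivalence is valid for functions in the Besov ball, and its error term is $\|R(\boldsymbol v,q)\|_{\B}\,\tilde m^{-s/d}$. By the triangle inequality and $\|\boldsymbol f\|_{\B}\le1$, this error is at most $(\|(\boldsymbol v,q)\|_{\U}+1)\tilde m^{-s/d}$.

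\textbf{Step 3 (boundary term).} We use \cite[Theorem~6.4]{BVPS}, which controls $\|\boldsymbol g-\boldsymbol v\|_{\boldsymbol H^{1/2}(\Gamma)}$ by the discrete $L^2$ plus Gagliardo sums on $\Y$, up to an error $\|\operatorname{Tr}(\boldsymbol v)-\boldsymbol g\|_{\operatorname{Tr}(\overline\B)}\,\bar m^{-(\bar s-1)/(d-1)}$. Since $\boldsymbol g\in\operatorname{Tr}(U(\overline\B))$, this error is again bounded by $(\|(\boldsymbol v,q)\|_{\U}+1)$ times the stated rate.

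\textbf{Step 4 (combination).} We square the bounds from Steps 2 and 3, use $(a+b)^2\le2a^2+2b^2$, and collect the two rate terms into $\mathfrak R_U(\tilde m,\bar m)$. This yields the claim.

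\textbf{Main obstacle.} The delicate step is Step 2 with $d=2$, where $\gamma=1$. Here the sampled $\ell^1$ quantity must dominate the $H^{-1}$ norm up to the $\tilde m^{-s/d}$ error. In the borderline case $p\le1$ this may cost a logarithmic factor, as in \eqref{4.9}. We would first invoke the quasi-interpolant bound $\|R-S_k^*R\|_{L^\gamma}\lesssim|R|_{\B}2^{-ks}$ for $p\ge\gamma$. If it fails for $p\le1$, we would restrict to the maximal class of Section~\ref{sec_4.4}, or absorb the $\log$ factor into the implicit constant, consistent with Theorem~\ref{thm5.2}.
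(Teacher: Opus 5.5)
\textbf{Gap in Step 2: the embedding fails for $d=2$.} Your argument follows the paper's. Its proof of this statement is the single sentence ``repeat the argument of Theorem~\ref{thm_3.5}'', and your Steps 1, 3 and 4 spell that out. Step 1 is correctly justified, since $\nabla\cdot\boldsymbol v=0$ makes $(R(\boldsymbol v,q)+\boldsymbol f,\operatorname{Tr}\boldsymbol v)$ compatible data for the stability estimate \eqref{eq:DF-stability}. The genuine gap is the first inequality of Step 2. For $d=2$ the rule $1/\gamma=1/2+1/d$ gives $\gamma=1$, but $L^1(\Omega)$ does \emph{not} embed into $H^{-1}(\Omega)$ in two dimensions, because $H^1_0(\Omega)\not\subset L^\infty(\Omega)$. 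For example, an $L^1$-normalized bump concentrated on a disc of radius $\epsilon$ has $H^{-1}$-norm of order $|\log\epsilon|^{1/2}$. Hence $\|R(\boldsymbol v,q)\|_{\boldsymbol H^{-1}(\Omega)}\lesssim\|R(\boldsymbol v,q)\|_{\boldsymbol L^1(\Omega)}$ fails for every $p$, before any sampling enters. Your ``main obstacle'' paragraph puts the difficulty in the sampling error for $p\le 1$, but that is not where it lies. Theorem~\ref{thm_3.5} uses the same rule for $\gamma$, so deferring to it, as the paper does, carries the same defect at $d=2$.

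\textbf{How to repair it, and why $p\le 1$ cannot be rescued.} Use an exponent $1<\gamma\le\min\{p,2\}$ in the interior term of $\L^*_{\mathrm{div}}$; the paper itself uses $\tau>1$ in the pressure part of Theorem~\ref{thm_5.5}. Then $L^\gamma(\Omega)\hookrightarrow H^{-1}(\Omega)$ does hold for $d=2$. Applied on the grid, \cite[Lemma~6.1]{BVPS} then gives exactly your error term $\|R(\boldsymbol v,q)\|_{\B}\,\tilde m^{-s/d}$, because $p\ge\gamma$. This proves the statement whenever $p>1$.

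Your fallback for $p\le 1$ does not work. A factor $\log\tilde m$ cannot be absorbed into a constant that must be independent of $\tilde m$. Moreover, for $p<1$ the stated bound is actually false:
\begin{itemize}
\item Take $\boldsymbol g=\boldsymbol 0$ and $(\boldsymbol v,q)=(\boldsymbol 0,0)$.
\item Let $\boldsymbol f=\boldsymbol\eta$ be the bump of Appendix~\ref{App_A}. It vanishes on $\X$ and satisfies $\|\boldsymbol\eta\|_{\boldsymbol H^{-1}(\Omega)}\gtrsim\tilde m^{-s/2+1/p-1}$.
\item Then $\L^*_{\mathrm{div}}(\boldsymbol 0,0)=0$ and $\|(\boldsymbol 0,0)\|_{\U}=0$.
\item The left-hand side is $\gtrsim\|\boldsymbol\eta\|^2_{\boldsymbol H^{-1}(\Omega)}\gtrsim\tilde m^{-s+2/p-2}$.
\item For $\bar m$ large, $\mathfrak{R}_U(\tilde m,\bar m)\asymp\tilde m^{-s}$, so the left-hand side exceeds the right-hand side by the unbounded factor $\tilde m^{2/p-2}$.
\end{itemize}
So $p>1$ has to be an explicit hypothesis, not something handled inside the proof. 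This is the regime with rate $\tilde m^{-s/d}$ that the setup before Theorem~\ref{thm_3.5} tacitly assumes.
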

\begin{proof}
	The result follows by repeating the argument of Theorem~\ref{thm_3.5}, using the corresponding norm equivalences and optimal recovery bounds.
\end{proof}

The methods proposed in Section~\ref{Sec_3} for the Oseen problem are not pressure robust. The formulation developed in Section~\ref{Sec_4} improves the divergence-free structure but is still not fully pressure robust, since the velocity may depend on gradient forces. Therefore, in the next section, we propose a fully pressure-robust formulation for the Oseen problem, where the velocity approximation error does not depend on the pressure.
\section{Pressure-robust CPINNs formulation for the Oseen equation}\label{Sec_5}
In this section, we restrict our attention to the two-dimensional setting, i.e., $d=2$. 
\subsection{Velocity equivalence and pressure robustness}
A fundamental observation in the numerical analysis of the incompressible Oseen problem is that the velocity solution depends only on the equivalence class of the forcing term modulo gradients. Two forces $\boldsymbol f_1, \boldsymbol f_2\in [H^{-1}(\Omega)]^2$ are said to be velocity-equivalent if they lead to the same velocity solution, which holds if and only if
\begin{align*}
	\boldsymbol f_2 = \boldsymbol f_1+\nabla \phi \text{ for some } \phi \in H^{1}(\Omega).
\end{align*}
Thus, a numerical method is considered pressure-robust if the error in the velocity approximation is independent of the pressure, which means that large or rapidly varying pressure gradients do not affect the velocity error. Equivalently, the velocity approximation depends only on the divergence-free component of the forcing term.
In particular, as shown in~\cite{linke2016pressure} for the Stokes problem and extending naturally to the Oseen equations, the exact velocity solution is determined solely by the Helmholtz–Hodge projector of the forcing term, while the irrotational (gradient) component influences only the pressure and has no effect on the velocity field.
%

\begin{definition}[Helmholtz–Hodge decomposition]
	Let $\Omega \subset \mathbb{R}^2$ be a bounded domain. Then every vector field $\boldsymbol{f} \in \boldsymbol L^2(\Omega)$ admits a unique decomposition of the form (see \cite{linke2019pressure})
	\begin{align*}
		\boldsymbol{f} = \nabla \phi + \mathbb{P}(\boldsymbol{f})
	\end{align*}
	where $\phi \in H^1(\Omega)/\mathbb{R}$, and 
	\begin{align*}
		\mathbb{P}(\boldsymbol{f})\in \mathbb{L}^2_{\sigma}(\Omega):= \{\boldsymbol v\in \boldsymbol L^2(\Omega): \nabla\cdot \boldsymbol v=0 \, \text{in}\,  \Omega\}
	\end{align*}
	is the Helmholtz–Hodge projector of $\boldsymbol{f}$. Moreover, the gradient component $\nabla \phi$ and the divergence-free component $	\mathbb{P}(\boldsymbol{f})$ are orthogonal in $\boldsymbol{L}^2(\Omega)$.
	
	The Helmholtz–Hodge projection admits a natural extension beyond $\boldsymbol{L}^2(\Omega)$. In particular, it can be continuously extended on the dual space $\boldsymbol{H}^{-1}(\Omega),$ with values in $ \boldsymbol{V}^*_0(\Omega)$, the dual of the divergence-free space $\boldsymbol{V}_0(\Omega)$.
\end{definition}


\subsection{Pressure-robust theoretical and discrete loss functional}
In order to formulate the pressure-robust Oseen formulation, we define the following theoretical loss functional
\begin{align}\label{6.3}
	\L_{\PR}(\boldsymbol{v}) = \|\nabla \times \left(-\nu\Delta\boldsymbol{v} +  (\boldsymbol{\beta}\cdot\nabla) \boldsymbol{v} + \sigma \boldsymbol{v}-\boldsymbol{f} \right) \|^2_{\boldsymbol{H}^{-2}(\Omega)} + \|\boldsymbol{g}-\boldsymbol{v}\|^2_{\boldsymbol{H}^{1/2}(\Gamma)} .
\end{align}
This loss penalizes only the curl of the momentum residual and therefore annihilates all gradient forces. As a result, the pressure variable does not influence the velocity approximation. Consequently, minimizing $	\L_{\PR}$
yields a velocity approximation that depends only on the divergence-free component of the forcing term.

Let $\N_n$ denotes a neural network approximation space. Given interior sampling points $\X = \{\boldsymbol x_i\}_{i=1}^{\tilde m} \subset\Omega$ and boundary points $\Y = \{\boldsymbol z_j\}_{j=1}^{\bar m}\subset \Gamma$, we define the discrete loss
\begin{align}\label{6.4}
	\mathcal{L}^*_{\PR}(\boldsymbol{v})
	= \sum_{l=1}^{d}\left[\frac{1}{\tilde{m}}\sum_{i=1}^{\tilde{m}}\left|\nabla\times \left(-\nu\Delta v(\boldsymbol{x}_i) + (\beta\cdot\nabla) v(\boldsymbol{x}_i) +\sigma v(\boldsymbol{x}_i)- f(\boldsymbol{x}_i)\right) \right|^\gamma \right]^{2/\gamma}\notag\\ 
	+ \frac{1}{\bar{m}}\sum_{l=1}^{d}\sum_{j=1}^{\bar{m}}|g^l(\boldsymbol{z}_j) - v^l(\boldsymbol{z}_j)|^2+
	\frac{1}{\bar m^2}\sum_{l=1}^{d}
	\sum_{\substack{i,j=1 \\ i\neq j}}^{\bar m}
	\frac{\lvert [g - v](\boldsymbol{z}_i) - [g -v](\boldsymbol{z}_j) \rvert^2}{\lvert \boldsymbol{z}_i - \boldsymbol{z}_j\rvert^d},
\end{align}
where $\gamma \in [1,2]$. This loss provides a norm-equivalent discretization of the continuous functional~\eqref{6.3}, based on the Sobolev embeddings into $\boldsymbol{H}^{-2}(\Omega)$. In particular, for spatial dimension $d=2$, we use the embedding $\boldsymbol{L}^\gamma(\Omega)\hookrightarrow \boldsymbol{H}^{-2}(\Omega)$. Consequently, it yields CPINNs approximations of the pressure-robust formulation.

\subsection{Velocity recovery from the pressure-robust loss}\label{Sec_6.4}
Let $\tilde{\boldsymbol{u}}$ denote the minimizer of the proposed loss functional $\L_{\PR}$. Then $\tilde{\boldsymbol{u}}$ satisfies the modified momentum equation given by
\begin{align}\label{6.5}
	- \nu \Delta \tilde {\boldsymbol{u}}+ (\boldsymbol{\beta} \cdot \nabla) \tilde{\boldsymbol{u}} + \sigma \tilde{\boldsymbol{u}}=\tilde{\boldsymbol{f}},\quad \curl(\tilde{\boldsymbol{f}}-\boldsymbol{f})=0,
\end{align}
with condition $\tilde {\boldsymbol{u}} =g$ over the boundary $\Gamma$. The later condition implies that $(\tilde{\boldsymbol{f}}-\boldsymbol{f})$ is a gradient field, and hence by uniqueness of the Helmholtz–Hodge decomposition
\begin{align*}
	\tilde{\boldsymbol{f}}= \mathbb{P}(\boldsymbol{f}),
\end{align*}
where $\mathbb{P}$ denotes the Helmholtz–Hodge projector onto the divergence-free subspace. As a result, $\tilde{\boldsymbol{u}}$ coincides with the exact velocity solution $\boldsymbol{u}$ of the Oseen problem. Thus, the problem~\eqref{6.5} can be reformulated as
\begin{align}\label{6.6}
	\begin{cases}
		- \nu \Delta \boldsymbol{u} + (\boldsymbol{\beta} \cdot \nabla) \boldsymbol{u} + \sigma \boldsymbol{u}\hspace{-0.4cm}&=\mathbb{P}(\boldsymbol{f}) \quad \text{in } \Omega, \\
		\hspace{3.2cm}\boldsymbol{u} &= \boldsymbol{g} \quad \text{on } \Gamma.
	\end{cases}
\end{align}

From the standard well posedness theory for the Oseen problem~\cite[Chapter~VIII]{galdi2011introduction}  there exist a unique solution $\boldsymbol{u}\in \boldsymbol{V}_0(\Omega)$ of~\eqref{6.6} which satisfies the estmate
\begin{align}\label{6.7}
	\|\boldsymbol{u}\|_{\boldsymbol{H}^1(\Omega)} \lesssim \|\mathbb{P}(\boldsymbol{f}) \|_{\boldsymbol{H}^{-1}(\Omega)} + \|\boldsymbol{g}\|_{\boldsymbol{H}^{1/2}(\Gamma)}.
\end{align}
For converse of this estimate, after taking the $\boldsymbol{H}^{-1}(\Omega)$ norm of $\mathbb{P}(\boldsymbol{f})$, and applying the definition of dual norm and Poincar$\acute{e}$ inequality, we get
\begin{align}\label{6.8}
	\|\mathbb{P}(\boldsymbol{f}) \|_{\boldsymbol{H}^{-1}(\Omega)} = \|	- \nu \Delta \boldsymbol{u} + (\boldsymbol{\beta} \cdot \nabla) \boldsymbol{u} + \sigma \boldsymbol{u}\|_{\boldsymbol{H}^{-1}(\Omega)}
	\lesssim \|\boldsymbol{u}\|_{\boldsymbol{H}^1(\Omega)}.
\end{align}
Similarly, by continuity of the trace operator we get
\begin{align}\label{6.9}
	\|\boldsymbol{g}\|_{\boldsymbol{H}^{1/2}(\Gamma)} = \|\operatorname{Tr}(\boldsymbol{u})\|_{\boldsymbol{H}^{1/2}(\Gamma)}\lesssim \|\boldsymbol{u}\|_{\boldsymbol{H}^1(\Omega)}.
\end{align}
Combining the estimate~\eqref{6.7} with~\eqref{6.8} and~\eqref{6.8} for the upper bound and using the definition of dual norm and trace norm for the lower bound, we obtain the norm equivalence
\begin{align}\label{6.10}
	\|\boldsymbol{u}\|_{\boldsymbol{H}^1(\Omega)} \asymp \|\mathbb{P}(\boldsymbol{f}) \|_{\boldsymbol{H}^{-1}(\Omega)} + \|\boldsymbol{g}\|_{\boldsymbol{H}^{1/2}(\Gamma)}.
\end{align}

\subsection{Pressure-robust optimal recovery of the velocity \texorpdfstring{$\boldsymbol{u}$}{u}}\label{Sec_7}
We now establish the optimal recovery properties of the velocity field obtained from the pressure-robust formulation. Recall that the velocity $\boldsymbol{u}$ is recovered as the unique minimizer of the consistent loss functional $\L^*_{\PR}$ defined in~\eqref{6.4}. By construction, this formulation eliminates the pressure variable and enforces consistency only with the divergence-free component of the forcing term.

Let the model classes $\tilde\F$ and $\G$ be defined as
\[\tilde \F = U(\boldsymbol B^s_{pq}(\Omega)),\quad \G = \operatorname{Tr}(U(\boldsymbol B^{\bar{s}}_{\bar{p}\bar{q}}(\Omega))),\]
with the smoothness assumptions
\begin{align*}
	s>\frac{d}{p},\ \bar{s}>\frac{d}{\bar{p}},\ 0<p,q\leq\infty, \ 0<\bar{p}\leq 2, \ 0<\bar{q}\leq \infty.
\end{align*}
These conditions ensure that Helmholtz–Hodge projection $\mathbb{P}(\f)$ of the forcing term and boundary data $\boldsymbol{g}$ are continuous and that the Oseen problem admits a unique velocity solution $u\in \boldsymbol{H}^1(\Omega)$. For the given functional settings, define the velocity solution class
\[\U_{\PR} =\{\boldsymbol{u}\in [C(\Omega)]^d \mid \boldsymbol{u} \text{ solves } \eqref{6.6} \text{ for some } \mathbb{P}(\f)\in \tilde\F, \g \in \G \}.\]
From the pressure-robust reformulation, the recovered velocity depends only on the Helmholtz–Hodge projection $\mathbb{P}(\f)$ of the forcing term. In particular, from the equivalence~\eqref{6.10} for any two admissible data pairs $(\mathbb{P}(\f_1),\g_1)$ and $(\mathbb{P}(\f_2),\g_2)$, the corresponding velocities $ \boldsymbol{u}_1, \boldsymbol{u}_2 \in \U_{\PR}$ satisfy the stability estimate
\begin{align}\label{5.9}
	\|\boldsymbol{u}_1-\boldsymbol{u}_2\|_{\boldsymbol{H}^1(\Omega)} \asymp \|\mathbb{P}(\boldsymbol{f}_1)-\mathbb{P}(\boldsymbol{f}_2)\|_{\boldsymbol{H}^{-1}(\Omega)} + \|\boldsymbol{g}_1-\boldsymbol{g}_2\|_{\boldsymbol{H}^{1/2}(\Gamma)}.
\end{align}

\subsubsection{Optimal recovery rate for \texorpdfstring{$\boldsymbol{u}$}{u}}
Let the interior sampling points $\X\subset\Omega$ and the boundary sampling points $\Y\subset\Gamma$ are given, with total sampling budget $m = |\X| + |\Y| = \tilde{m}+\bar{m}$. Using the optimal recovery rates for $\mathbb{P}(\boldsymbol{f})$ in $\boldsymbol{H}^{-1}(\Omega)$ (Theorem~\ref{thm4.1}) and for $\boldsymbol{g}$ in $\boldsymbol{H}^{1/2}(\Gamma)$ (Theorem~\ref{thm4.2}), we obtain 
\begin{align}\label{6.12}
	R^*(\U_{\PR},\X,\Y)_{ \boldsymbol{H}^1(\Omega)}
	\asymp
	R^*(\tilde\F,\X)_{ \boldsymbol{H}^{-1}(\Omega)} + R^*(\G,\Y)_{ \boldsymbol{H}^{1/2}(\Gamma)} .
\end{align}
Optimizing it over all admissible sampling densities yields the following result.
\begin{theorem}\label{thm7.1}
	Let $\Omega=(0,1)^d$ and assume the model classes $\tilde\F, \G$ and $\U_{\PR}$ as defined above. Then the optimal recovery rate of the velocity field in the $\boldsymbol{H}^1(\Omega)$ norm satisfies
	\[	R^*_m(\U_{\PR},\X,\Y)_{ \boldsymbol{H}^1(\Omega)} \asymp m^{-\min\{\alpha,\beta\}}, \quad m\geq 2,   \]
	for $d=2$ with $p>1$. For the case of $d=2$ and $0<p\leq 1$, the rate becomes
	\[ m^{-\min\{\alpha,\beta\}}	\lesssim R^*_m(\U_{\PR},\X,\Y)_{ \boldsymbol{H}^1(\Omega)} \lesssim \log(m)m^{-\alpha} + m^{-\beta}, \]
	where $\alpha$ and $\beta$ are as defined in~\eqref{4.9} and~\eqref{4.24}, corresponding to the divergence-free component $\mathbb{P}(\f)$ of the forcing term and boundary data $\g$, respectively, and all constants appearing in the equivalence are independent of $m$.
\end{theorem}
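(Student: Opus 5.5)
The plan follows the proofs of the earlier recovery theorem for $\U$ and of Theorem~\ref{thm5.2}, with the full forcing replaced by its Helmholtz–Hodge projection.

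\emph{Reduction to the data classes.} Take two admissible velocities $\boldsymbol u_1,\boldsymbol u_2\in\U_{\PR}$ with data $(\mathbb{P}(\f_1),\g_1)$ and $(\mathbb{P}(\f_2),\g_2)$. The stability equivalence \eqref{5.9} says the $\boldsymbol H^1(\Omega)$ distance between them is equivalent to the sum of the $\boldsymbol H^{-1}(\Omega)$ distance of the projected forcings and the $\boldsymbol H^{1/2}(\Gamma)$ distance of the boundary data. This equivalence is two-sided with constants independent of the data. Hence for fixed $(\X,\Y)$, the Chebyshev radius of $\U_{\PR,\mathrm{data}}$ in $\boldsymbol H^1(\Omega)$ is comparable to the sum of the radii of the consistent data sets $\tilde\F_{\mathrm{data}}$ in $\boldsymbol H^{-1}(\Omega)$ and $\G_{\mathrm{data}}$ in $\boldsymbol H^{1/2}(\Gamma)$. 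Two facts make this work:
\begin{itemize}
\item the solution map $(\mathbb{P}(\f),\g)\mapsto\boldsymbol u$ of \eqref{6.6} is linear;
\item the interior and boundary data sets decouple, since $\tilde\F$ and $\G$ are independent model classes.
\end{itemize}
Taking the supremum over admissible data gives \eqref{6.12}.

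\emph{Upper bound.} Split the budget evenly, $\tilde m=\bar m=m/2$. Apply Theorem~\ref{thm4.1} to the class $\tilde\F=U(\boldsymbol B^s_{pq}(\Omega))$, in which $\mathbb{P}(\f)$ lives by assumption. This gives $R^*_{\tilde m}(\tilde\F)\lesssim \tilde m^{-\alpha}$ for $p>1$, and $\tilde m^{-\alpha}\log\tilde m$ for $0<p\le1$ when $d=2$. Apply Theorem~\ref{thm4.2} to get $R^*_{\bar m}(\G)\lesssim\bar m^{-\beta}$. Then $m^{-\alpha}+m^{-\beta}\lesssim m^{-\min\{\alpha,\beta\}}$ yields the upper bound in the first case, and $\log(m)m^{-\alpha}+m^{-\beta}$ in the second.

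\emph{Lower bound.} For an arbitrary allocation $m=\tilde m+\bar m$, use the lower halves of Theorems~\ref{thm4.1} and \ref{thm4.2} to get
\[
R^*_m(\U_{\PR})\gtrsim\inf_{\tilde m+\bar m=m}\bigl(\tilde m^{-\alpha}+\bar m^{-\beta}\bigr).
\]
One of $\tilde m,\bar m$ is at least $m/2$ and the other is at most $m$, so the infimum is $\gtrsim m^{-\min\{\alpha,\beta\}}$.

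\emph{Main obstacle.} The delicate step is justifying that Theorem~\ref{thm4.1} transfers to the projected class. The lower bound requires the extremal "bump" functions used in its proof to be realizable as divergence-free fields in $\tilde\F$, so they must be elements of $\tilde\F\cap\mathbb{L}^2_\sigma$. I would handle this in $d=2$ by taking each bump as $\curl\psi=(\partial_2\psi,-\partial_1\psi)$ of a localized scalar bump $\psi$ scaled one order smoother. Such fields are exactly divergence-free, vanish at the sampling points, and retain the same Besov norm scaling and the same $\boldsymbol H^{-1}$ lower bound. Consequently the fooling argument of Appendix~\ref{App_A} reproduces the rate $\tilde m^{-\alpha}$.

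For the upper bound, the interpolant $\boldsymbol S_k^*$ need not preserve divergence-freeness. This is harmless, because recovery only requires an $\boldsymbol H^{-1}$-close element, and consistency of the data with $\tilde\F$ is all that is used.
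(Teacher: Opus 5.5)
Your skeleton is the paper's proof. You reduce via \eqref{5.9} to \eqref{6.12}, split the budget evenly and use Theorems~\ref{thm4.1}--\ref{thm4.2} for the upper bound, and take the infimum over allocations for the lower bound. For that last step, $\tilde m,\bar m\le m$ already gives $\tilde m^{-\alpha}+\bar m^{-\beta}\ge m^{-\alpha}+m^{-\beta}$. The paper simply cites the lower bound of Theorem~\ref{thm4.1} for $\tilde\F$, read as the full Besov ball, and never checks whether its fooling functions are divergence-free. The problem is the extra step you add to handle that issue. If the forcing class must consist of divergence-free fields, your curl-bump construction does not give the rate in the main case of the theorem.

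For $d=2$ one has $1/\gamma=1/2+1/d=1$, so the case $p>1$ is exactly $p>\gamma$. There $\alpha=s/2$, and Appendix~\ref{App_A} gets the lower bound from the sum $\kappa\sum_{I\in\I}\boldsymbol\eta_I$ of nonnegative bumps of height $\sim\tilde m^{-s/2}$, tested against one fixed positive field. The bound therefore comes from the mean of $\boldsymbol\eta$. The components of a divergence-free field with zero normal trace have zero mean, so this mechanism is unavailable. Your construction is also quantitatively too small. We have $\sum_I\curl\,\psi_I=\curl\,\Psi$ with $\Psi=\sum_I\psi_I$ of height $\lesssim\tilde m^{-(s+1)/2}$. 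Since $\langle\curl\,\Psi,\boldsymbol v\rangle=\int_\Omega\Psi\,(\partial_1v_2-\partial_2v_1)\,dx$ for $\boldsymbol v\in\boldsymbol H^1_0(\Omega)$, it follows that $\|\curl\,\Psi\|_{\boldsymbol H^{-1}(\Omega)}\lesssim\|\Psi\|_{L^2(\Omega)}\lesssim\tilde m^{-(s+1)/2}$, a full order short of $\tilde m^{-s/2}$. A single curl-bump gives $\tilde m^{-s/2+1/p-1}$, which is also $o(\tilde m^{-s/2})$ for $p>1$.

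So on your own reading, the lower bound in the first case of the theorem is not proved. You would need a non-localized stream function of size $\tilde m^{-s/2}$, for instance $\tilde m^{-s/2}\Psi_0$ with $\Psi_0$ a fixed bump, corrected near the sites so that its gradient vanishes there. Its Besov norm would have to be controlled for arbitrary site configurations, which is a genuinely new argument. The alternative is to adopt the paper's reading, where Theorem~\ref{thm4.1} applies verbatim and your obstacle paragraph is unnecessary. Your single curl-bump does give $\tilde m^{-\alpha}$ for $0<p\le 1$. Even there it does not work through the positivity argument of Appendix~\ref{App_A}, since curl-bumps change sign: you must pair $\curl\,\psi_I$ with its own $\boldsymbol H^1_0$-normalized rescaling.
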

\begin{proof}
	For the balanced allocation  of collocation points we take $\tilde m = \bar m = m/2$. From Theorem~\ref{thm4.1}, the optimal recovery rate for the divergence-free forcing
	satisfies
	\[
	R^*_{\tilde m}(\tilde \F,\boldsymbol{H}^{-1}(\Omega))
	\;\lesssim\;
	\begin{cases}
		\tilde m^{-\alpha}, & d=2,\ p>1,\\[2mm]
		\log(\tilde m)\,\tilde m^{-\alpha}, & d=2,\ 0<p\le 1 .
	\end{cases}
	\]
	From Theorem~\ref{thm4.2}, the boundary recovery satisfies
	\[
	R^*_{\bar m}(\G,\boldsymbol{H}^{1/2}(\Gamma)) \;\lesssim\; \bar m^{-\beta}.
	\]
	Substituting these bounds into \eqref{6.12} yields the upper bound
	\[
	R^*_m(\U_{\PR})_{\boldsymbol{H}^1(\Omega)}
	\;\lesssim\;
	\tilde m^{-\alpha} + \bar m^{-\beta}
	\;\lesssim\;
	m^{-\min\{\alpha,\beta\}},
	\]
	with the logarithmic modification in the case of $0<p\le1$. Let $\X$ and $\Y$ be arbitrary sampling sets with $|\X|+|\Y|=m$.
	Using the equal sampling $\tilde m = \bar m = m/2$ together with the lower bounds from
	Theorem~\ref{thm4.1} and Theorems~\ref{thm4.2} , we obtain
	\[
	R^*_m(\U_{\PR})_{\boldsymbol{H}^1(\Omega)}
	\;\gtrsim\;
	\inf_{\tilde m+\bar m=m}
	\left(
	\tilde m^{-\alpha} + \bar m^{-\beta}
	\right)\;\gtrsim\;
	m^{-\min\{\alpha,\beta\}} .
	\]
	Combining the upper and lower bounds proves the optimal recovery rate of the pressure-robust velocity in the $\boldsymbol{H}^1(\Omega)$ norm.
\end{proof}

\subsection{Pressure recovery}\label{presssure_recovery}
While the proposed formulation eliminates the pressure from the velocity learning process, the pressure field can be recovered once the velocity is known. Thus, for the given recovered velocity $\boldsymbol{u}$, and the modified forcing term $\bar{\boldsymbol{f}} :=  \boldsymbol{f} - \left(	- \nu \Delta {\boldsymbol{u}}+ (\boldsymbol{\beta} \cdot \nabla) {\boldsymbol{u}} + \sigma {\boldsymbol{u}}\right)$, the pressure gradient is obtained from the momentum equation with the normalization condition
\begin{align*}
	\nabla p &= \bar{\boldsymbol{f}} \ \text{in }\, \Omega, \\
	\int_{\Omega}^{}p\,dx&=0.
\end{align*}
This defines a well-posed first order problem for the pressure, ensuring consistent recovery without affecting the velocity approximation. Thus, from~\cite[Chapter~VIII]{galdi2011introduction} there exist a unique solution $p\in L^2_0(\Omega)$ of this problem satisfying the stability estimate
\begin{align}\label{6.13}
	\|p\|_{L^2(\Omega)} \lesssim \|\boldsymbol{f}\|_{\boldsymbol{H}^{-1}(\Omega)} + \|\boldsymbol{u}\|_{\boldsymbol{H}^1(\Omega)}.
\end{align}

To compute the pressure numerically, we introduce a neural network approximation $p_{\theta}\in \N_n^{p}$ to define the pressure by using CPINNs formulation. Thus the resulting optimization problem given by
\begin{align}
	p_{\theta} = \mathop{\arg\min}_{q \in L_0^2(\Omega)} \mathcal{L}_p(q),
\end{align} 
where the pressure recovery loss functional defined as
\begin{align}\label{5.13}
	\L_p(q) = \|\nabla q - \bar{\boldsymbol{f}}\|^2_{\boldsymbol{H}^{-1}(\Omega)}.
\end{align}

\subsubsection{Optimal recovery of the pressure $p$}
We now analyze the optimal recovery of the pressure field obtained a posteriori from the recovered velocity. As discussed in Section~\ref{Sec_7}, once the velocity $\boldsymbol u$ is known, the pressure gradient is uniquely determined from the momentum equation together with the normalization condition
\begin{align}\label{8.2}
	\nabla p = \bar{\boldsymbol{f}}  \ \text{in }\, \Omega,\quad  \int_{\Omega}^{}p\,dx=0.
\end{align}
Define the associated pressure solution class
\begin{align*}
	\mathscr{P}:= \{p\in L^2_0(\Omega) \mid p \text{ solves } \eqref{8.2} \text{ for some } \boldsymbol f\in\F, \boldsymbol u\in \U_{\PR}\}.
\end{align*}

Let $\boldsymbol u_1, \boldsymbol u_2\in \boldsymbol{H}^1(\Omega)$ be two velocity fields and let $p_1,p_2\in L_0^2(\Omega)$ denote the corresponding pressures recovered as
\begin{align}
	\nabla p_i = \boldsymbol{f}_i - \left(- \nu \Delta \boldsymbol{u}_{i}+ (\boldsymbol{\beta} \cdot \nabla) \boldsymbol{u}_{i} + \sigma \boldsymbol{u}_{i}\right),\quad i = 1,2.
\end{align}
From the stability estimate~\eqref{6.13}, we get
\begin{align}\label{8.7}
	\|p_1-p_2\|_{L^2(\Omega)} \lesssim \|\boldsymbol{f}_1- \boldsymbol{f}_2\|_{\boldsymbol{H}^{-1}(\Omega)} + \|\boldsymbol{u}_1-\boldsymbol{u}_2\|_{\boldsymbol{H}^1(\Omega)}.
\end{align}
Thus, from the optimal recovery estimates for $\boldsymbol{f}$ and $\boldsymbol{u}$, it follows that the pressure recovery error is dominated by the slower of the two rates. In particular, since the velocity $\boldsymbol{u}$ is recovered optimally in $\boldsymbol{H}^1(\Omega)$, the pressure recovery express the same asymptotic behavior.
\begin{theorem}
	Let $p\in \PP$ be recovered from the pressure-robust approximation as described in Section~\ref{presssure_recovery}. Under the assumptions of Theorem~\ref{thm7.1}, the optimal recovery rate of the pressure satisfies
	\begin{align}
		R_m^*(\PP)_{L^2(\Omega)} \lesssim m^{-\min\{\alpha,\beta\}}, \quad  m\geq 2,
	\end{align}
	for $d=2$ with $p>1$. When $0<p\leq 1$ the rate becomes
	\begin{align}
		R^*_m(\PP)_{L^2(\Omega)} \lesssim \log(m)m^{-\alpha} + m^{-\beta}.
	\end{align}
\end{theorem}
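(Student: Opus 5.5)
The plan is to transfer the recovery rates for $\boldsymbol f$ and for the velocity to the pressure through the Lipschitz-type stability bound \eqref{8.7}. We only need an upper bound, so we exhibit a particular recovery scheme and do not have to match any lower bound.

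\emph{Reduction at fixed sampling sets.} Fix sites $\X\subset\Omega$, $\Y\subset\Gamma$ with $|\X|=\tilde m$, $|\Y|=\bar m$. Take two pressures $p_1,p_2\in\PP$ whose data $(\boldsymbol f_1,\boldsymbol g_1)$ and $(\boldsymbol f_2,\boldsymbol g_2)$ agree at these sites, i.e.\ both belong to the same data-consistent class. By \eqref{8.7},
\[
\|p_1-p_2\|_{L^2(\Omega)}\lesssim \|\boldsymbol f_1-\boldsymbol f_2\|_{\boldsymbol H^{-1}(\Omega)}+\|\boldsymbol u_1-\boldsymbol u_2\|_{\boldsymbol H^1(\Omega)}.
\]
The Chebyshev radius of a set is comparable, up to a factor $2$, to half its diameter. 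Taking suprema over data-consistent pairs therefore gives
\[
R^*(\PP,\X,\Y)_{L^2(\Omega)}\lesssim R^*(\F,\X)_{\boldsymbol H^{-1}(\Omega)}+R^*(\U_{\PR},\X,\Y)_{\boldsymbol H^1(\Omega)}.
\]
The velocity term is handled by \eqref{6.12}, which bounds it by $R^*(\tilde\F,\X)_{\boldsymbol H^{-1}(\Omega)}+R^*(\G,\Y)_{\boldsymbol H^{1/2}(\Gamma)}$.

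\emph{Relating $\tilde\F$ to $\F$.} The full force $\boldsymbol f$ and its projection $\mathbb P(\boldsymbol f)$ are both assumed to lie in Besov unit balls of the same type, $\boldsymbol B^s_{pq}$. So both $\F$ and $\tilde\F$ are governed by Theorem~\ref{thm4.1} with the same exponent $\alpha=\alpha_{-1}$. The step needing most care is that the same point values $\boldsymbol f(\boldsymbol x_i)$ determine the information for both $\boldsymbol f$ and $\mathbb P(\boldsymbol f)$.

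\begin{itemize}
\item \textbf{Default approach.} Work within the framework of Theorem~\ref{thm7.1}, which already treats $\mathbb P(\boldsymbol f)\in\tilde\F$ as recoverable from $\tilde m$ samples at rate $\tilde m^{-\alpha}$. Take that as given, so all terms carry the same exponent.
\item \textbf{Fallback.} If that is contested, use the bound $\|\mathbb P(\boldsymbol f_1-\boldsymbol f_2)\|_{\boldsymbol V_0^*}\le\|\boldsymbol f_1-\boldsymbol f_2\|_{\boldsymbol H^{-1}(\Omega)}$, from the continuity of the extended projector. This controls the velocity difference directly by the $\F$-recovery radius, so no separate treatment of $\tilde\F$ is needed.
\end{itemize}

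This step is the main obstacle. The rest is bookkeeping.

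\emph{Balancing the budget.} Choose $\tilde m=\bar m=m/2$ and the tensor grids used in the proofs of Theorems~\ref{thm4.1} and~\ref{thm4.2}. For $d=2$, $p>1$ this gives
\[
R_m^*(\PP)_{L^2(\Omega)}\lesssim m^{-\alpha}+m^{-\alpha}+m^{-\beta}\lesssim m^{-\min\{\alpha,\beta\}}.
\]
For $0<p\le1$, the bound \eqref{4.9} introduces a $\log(m)$ factor on the interior terms, giving $\log(m)m^{-\alpha}+m^{-\beta}$. The constants from \eqref{8.7}, \eqref{6.12} and Theorems~\ref{thm4.1}--\ref{thm4.2} do not depend on $m$, and taking the infimum over sampling sets can only decrease the left-hand side, which completes the argument.
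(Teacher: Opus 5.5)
Your proposal matches the paper's proof. You apply \eqref{8.7} at fixed sampling sets to get $R^*(\PP,\X,\Y)_{L^2(\Omega)}\lesssim R^*(\F,\X)_{\boldsymbol H^{-1}(\Omega)}+R^*(\U_{\PR},\X,\Y)_{\boldsymbol H^1(\Omega)}$, then invoke Theorems~\ref{thm7.1} and~\ref{thm4.1} with the balanced split $\tilde m=\bar m=m/2$, exactly as the paper does. Your fallback is a worthwhile addition: bounding the velocity difference by $\|\boldsymbol f_1-\boldsymbol f_2\|_{\boldsymbol H^{-1}(\Omega)}+\|\boldsymbol g_1-\boldsymbol g_2\|_{\boldsymbol H^{1/2}(\Gamma)}$ removes the paper's tacit assumption that point samples of $\boldsymbol f$ also give samples of the nonlocal $\mathbb P(\boldsymbol f)$.
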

\begin{proof}
	Let $\U_{\PR}$ denote the pressure-robust velocity solution class and $\PP$ be the corresponding pressure class. For fixed sampling sets $\X\subset\Omega$ and $\Y\subset \Gamma$, inequality~\eqref{8.7} implies
	\begin{align}
		R^*(\PP,\X,\Y)_{L^2(\Omega)} \lesssim R^*(\F,\X)_{H^{-1}(\Omega)} + R^*(\U_{\PR},\X,\Y)_{H^1(\Omega)}.
	\end{align}
	Using the optimal recovery rate for the pressure-robust velocity class $\U_{\PR}$ from Theorem~\ref{thm7.1} and the optimal recovery rate for the forcing class $\tilde{\F}$ from Theorem~\ref{thm4.1}. Then, for the balanced allocation $\tilde m=\bar m = m/2$, we obtain
	\begin{align}
		R^*_m(\PP)_{L^2(\Omega)}\lesssim \tilde{m}^{-\alpha} + m^{-\min\{\alpha,\beta\}} \lesssim  m^{-\min\{\alpha,\beta\}},
	\end{align}
	with the usual logarithmic modification in the case $0<p\leq 1$.
\end{proof}

\begin{remark}
	A pressure-robust formulation can also be considered without enforcing the incompressibility constraint directly in the neural network architecture. In this case, the loss functional is defined as
	\begin{align}
		\L_{\PR}(\boldsymbol{v}) = \|\nabla \times \left(-\nu\Delta\boldsymbol{v} +  (\boldsymbol{\beta}\cdot\nabla) \boldsymbol{v} + \sigma \boldsymbol{v}-\boldsymbol{f} \right) \|^2_{\boldsymbol{H}^{-2}(\Omega)} + \|\nabla\cdot\boldsymbol{v}  \|^2_{L^2(\Omega)} \\\notag+ \|\boldsymbol{g}-\boldsymbol{v}\|^2_{\boldsymbol{H}^{1/2}(\Gamma)} .
	\end{align}
	The curl of the momentum residual eliminates the effect of gradient forces and therefore preserves pressure robustness. The velocity approximation $\boldsymbol{u} _{NN}$ is obtained directly from the neural network architecture and is not required to satisfy the div-free constraint. Once the velocity is recovered, the pressure field can be computed separately using the loss $\mathcal{L}_p$ defined in \eqref{5.13}. This results in a pressure-robust PINNs formulation in which the velocity learning stage does not impose the divergence-free condition explicitly.
\end{remark}
\subsection{Discrete residual control for pressure-robust CPINNs}
The proposed loss functionals $\L_{\PR}, \L_p$, formulated in terms of continuous norms, are not directly suitable for numerical implementation. To overcome this, we introduce corresponding computable discrete surrogates $\L^*_{\PR}$ and $\L^*_{p}$, constructed using only finitely many sampled evaluations of the problem data, such as the forcing term and the boundary conditions. We consider a set of interior collocation points $\X:= G_{k,r}\subset \Omega$ and a set of boundary points $\Y:= \overline{G}_{k,r}\subset\Gamma$ with corresponding cardinalities $\tilde{m}=|\X|$ and $\bar{m} = |\Y|$. Under suitable assumptions on $\F$ and $\G$, the exact solution $(\boldsymbol u,p)$ of the Oseen system belongs to a prescribed approximation class $\U\times\mathscr{P}$. Thus, the best possible recovery rate to recover solution is given by $\max\{\tilde{m}^{-s/d}, \bar{m}^{-(s-1)/(d-1)}\}$. In order to quantify the residual of an approximation, for any pair $(\boldsymbol v,q)$ we define the norm $\|\boldsymbol v\|_{\U}:= \max\{\|\nabla \times (- \nu \Delta \boldsymbol{v} + (\boldsymbol{\beta} \cdot \nabla) \boldsymbol{v} + \sigma \boldsymbol{v})\|_{\B}, \|\operatorname{Tr}(\boldsymbol v)\|_{\operatorname{Tr}(\overline{\B})}\}$ and $\|(\boldsymbol v,p)\|_{\mathscr{P}} = \max\{\|\boldsymbol v\|_{\U}, \|\nabla q\|_{\B}\}$. Using this construction, we will show that the approximation error in the velocity, measured in the $\boldsymbol H^1(\Omega)$-norm, is bounded in terms of the discrete loss functional $\L^*_{\PR}$, and the approximation error in the pressure in $L^2(\Omega)$-norm, is bounded by $\L^*_{\PR}(\boldsymbol v)$ and $\L^*_{p}(q)$. The result will hold under the regularity assumptions
that $(- \nu \Delta \boldsymbol{v} + (\boldsymbol{\beta} \cdot \nabla) \boldsymbol{v} + \sigma \boldsymbol{v})\in\B$ with $\boldsymbol v\in\overline{\B}$, and $q\in B$.
\begin{theorem}\label{thm_5.5}
	Let $\Omega$ be a simply-connected bounded domain, $(\boldsymbol u,p)$ be the exact solution of the pressure-robust formulation corresponding to data $\boldsymbol f\in \F:= U(\B)$, $\boldsymbol g\in \G$ as defined in~\eqref{eq:F-def}, and let $(\boldsymbol v,q)$ be an admissible approximation. Assume sampling set $\X\subset \Omega$, $\Y \subset \Gamma$ with sizes $\tilde{m}, \bar m$ respectively. Then, the discrete loss functional $L^*_{\PR}(\boldsymbol v)$ bounds the velocity error as
	\begin{align}
		\|\boldsymbol u- \boldsymbol v\|^2_{\boldsymbol{H}^1(\Omega)} \lesssim \L_{\PR}^*(\boldsymbol  v) + (\|\boldsymbol v\|_{\U}^2+1 )\mathfrak{R}_{\PR}(\tilde m, \bar{m}),
	\end{align}
	and the discrete pressure loss functional $\L_{p}^*(q)$ satisfies the estimate
	\begin{align}
		\|p-q\|^2_{L^2(\Omega)} \lesssim \L_{p}^*(q)+ \L^*_{\PR}(\boldsymbol  v) + (\|(\boldsymbol v,p)\|_{\mathscr{P}} ^2+1 )\mathfrak{R}_{\PR}(\tilde m, \bar{m}),
	\end{align}
	where the residual term is given by 
	\[\mathfrak{R}_{\PR}(\tilde m, \bar{m}) = \max\{\tilde{m}^{-2s/d}, \bar{m}^{-2(s-1)/(d-1)}\}.\]
	The constants that appear in equivalence will be independent of $\boldsymbol u,\boldsymbol v,\tilde m, \bar m$.
\end{theorem}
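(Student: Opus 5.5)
The plan follows the template of Theorem~\ref{thm_3.5}: first bound the continuous errors by the continuous losses through stability estimates, then pass from continuous to discrete norms with the sampling-norm equivalences of~\cite{BVPS}.

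\textbf{Step 1: velocity bound.} Set $\boldsymbol e:=\boldsymbol u-\boldsymbol v$ and write $\mathcal A\boldsymbol w:=-\nu\Delta\boldsymbol w+(\boldsymbol\beta\cdot\nabla)\boldsymbol w+\sigma\boldsymbol w$. Since $\mathcal A\boldsymbol u-\boldsymbol f=-\nabla p$ has zero curl, we get
\[
\nabla\times(\mathcal A\boldsymbol e)=-\nabla\times(\mathcal A\boldsymbol v-\boldsymbol f).
\]
The key continuous estimate to establish is
\[
\|\boldsymbol e\|_{\boldsymbol H^1(\Omega)}\lesssim\|\nabla\times(\mathcal A\boldsymbol e)\|_{H^{-2}(\Omega)}+\|\boldsymbol e\|_{\boldsymbol H^{1/2}(\Gamma)}.
\]
In 2D on a simply connected $\Omega$, a functional $\boldsymbol F\in\boldsymbol H^{-1}(\Omega)$ satisfies $\|\mathbb P(\boldsymbol F)\|_{\boldsymbol V_0^*}\asymp\|\nabla\times\boldsymbol F\|_{H^{-2}(\Omega)}$. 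This follows because divergence-free test fields in $\boldsymbol H^1_0$ are of the form $\boldsymbol w=\curl\,\psi$ with $\psi\in H^2_0(\Omega)$ and $\|\boldsymbol w\|_{\boldsymbol H^1}\asymp\|\psi\|_{H^2}$, so
\[
\langle\boldsymbol F,\curl\,\psi\rangle=\langle\nabla\times\boldsymbol F,\psi\rangle.
\]
Combining this with the equivalence~\eqref{6.10} applied to $\boldsymbol e$ yields the desired estimate. Since $\boldsymbol v$ is an admissible, divergence-free approximation, $\boldsymbol e$ solves~\eqref{6.6} with projected data $\mathbb P(\mathcal A\boldsymbol e)$.

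\textbf{Step 2: discretization of the velocity loss.} We will use $L^\gamma(\Omega)\hookrightarrow H^{-2}(\Omega)$ in $d=2$, together with the discrete--continuous $L^\gamma$ equivalence on $G_{k,r}$ (the analogue of~\cite[Lemma~6.1]{BVPS}). These replace $\|\nabla\times(\mathcal A\boldsymbol v-\boldsymbol f)\|_{L^\gamma}$ by its sampled version, up to an error of order $\|\nabla\times(\mathcal A\boldsymbol v-\boldsymbol f)\|_{\B}\,\tilde m^{-s/d}$. Likewise, the $H^{1/2}(\Gamma)$ term is handled by~\cite[Theorem~6.4]{BVPS}, with error $\|\operatorname{Tr}(\boldsymbol g-\boldsymbol v)\|_{\operatorname{Tr}(\overline\B)}\,\bar m^{-(\bar s-1)/(d-1)}$. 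Squaring, using the triangle inequality with $\|\boldsymbol f\|_{\B},\|\boldsymbol g\|\le1$, and using the definition of $\|\boldsymbol v\|_{\U}$ then gives the first estimate.

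\textbf{Step 3: pressure bound.} From~\eqref{8.2} and Ne\v{c}as' inequality,
\[
\|p-q\|_{L^2}\lesssim\|\nabla(p-q)\|_{\boldsymbol H^{-1}}\le\|\nabla q-\bar{\boldsymbol f}_{\boldsymbol v}\|_{\boldsymbol H^{-1}}+\|\mathcal A(\boldsymbol u-\boldsymbol v)\|_{\boldsymbol H^{-1}},
\]
where $\bar{\boldsymbol f}_{\boldsymbol v}$ is the modified forcing built from $\boldsymbol v$. The last term is $\lesssim\|\boldsymbol u-\boldsymbol v\|_{\boldsymbol H^1}$, which Step~2 controls. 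The first term is $\L_p(q)^{1/2}$, and it is discretized as in Step~2 via $L^\gamma\hookrightarrow H^{-1}$, producing the $\|\nabla q\|_{\B}$ contribution to $\|(\boldsymbol v,p)\|_{\mathscr P}$.

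\textbf{Main obstacle.} The hardest step is the curl/$H^{-2}$ characterization in Step~1, which relies on simple connectivity and the stream-function isomorphism. A secondary delicacy is that the sampled curl residual requires Besov control of the curl, which is what the definition of $\|\cdot\|_{\U}$ provides.
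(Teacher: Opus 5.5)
Your proposal is essentially the paper's proof. You obtain $\|\boldsymbol u-\boldsymbol v\|_{\boldsymbol H^1(\Omega)}\lesssim\|\nabla\times(\mathcal{A}\boldsymbol v-\boldsymbol f)\|_{H^{-2}(\Omega)}+\|\boldsymbol g-\boldsymbol v\|_{\boldsymbol H^{1/2}(\Gamma)}$ from the duality $\langle\boldsymbol F,\curl\,\psi\rangle=\langle\nabla\times\boldsymbol F,\psi\rangle$ and the Oseen bound on divergence-free fields, where the paper writes $\boldsymbol u-\boldsymbol v=\nabla\times(\phi-\tilde\phi)$ and cites the equivalent stream-function stability estimate; both then discretize the same way (embedding into $H^{-2}$, the sampling inequalities of~\cite{BVPS}, and a Ne\v{c}as-type bound with the triangle inequality for the pressure, where the paper correctly uses $L^\tau\hookrightarrow H^{-1}$ with $\tau>1$, since $L^1(\Omega)$ does not embed in $H^{-1}(\Omega)$ for $d=2$).

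Note that the final absorption step, in your proposal as in the paper, does not follow from $\|\boldsymbol f\|_{\B}\le 1$. The sampled curl residual needs $\|\nabla\times\boldsymbol f\|_{\B}$, which costs one derivative beyond $\boldsymbol f\in U(\B)$. The pressure residual $\|\nabla q-\bar{\boldsymbol f}_{\boldsymbol v}\|_{\B}$ involves $\|\mathcal{A}\boldsymbol v\|_{\B}$, which $\|(\boldsymbol v,q)\|_{\mathscr{P}}$ does not contain. So the stated bound tacitly requires these extra assumptions.
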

\begin{proof}We first prove the error bound for the velocity field, and subsequently derive the estimate for the pressure. Since $\boldsymbol u$ is divergence-free, it follows from~\cite{girault1986finite}, that for $d=2$ there exist a stream function $\phi$ such that $\boldsymbol u=\nabla\times \phi$. Similarly, for the admissible approximation $\boldsymbol v$ there exist $\tilde \phi$ such that $\boldsymbol v = \nabla\times \tilde\phi$. For this representation the problem~\eqref{1} will change as a biharmonic problem for $(\phi-\tilde{{\phi}})$. Thus, by the stability estimate for the stream-function formulation~\cite[Section~5.5, Chapetr~1]{girault1986finite}, it follows that
	\begin{align}\label{5.24}
		\|\phi-\tilde \phi\|_{\boldsymbol H^2(\Omega)} \leq \|\nabla\times (\boldsymbol f-\tilde {\boldsymbol f})\|_{H^{-2}(\Omega)} + \|\boldsymbol g-\tilde {\boldsymbol g}\|_{\boldsymbol H^{1/2}(\Gamma)}.
	\end{align}
	Using $\boldsymbol u-\boldsymbol v = \nabla\times( \phi-\tilde{{\phi}})$ in~\eqref{5.24}, we get
	\begin{align}\label{5.25}
		\|\boldsymbol u-\boldsymbol v\|_{\boldsymbol H^1(\Omega)}&= \|\nabla \times ( \phi-\tilde{\phi})\|_{\boldsymbol H^1(\Omega)}\lesssim \|\phi-\tilde{\phi}\|_{H^2(\Omega)}\notag\\
		&\lesssim  \|\nabla\times (\boldsymbol f-\tilde {\boldsymbol f})\|_{H^{-2}(\Omega)} + \|\boldsymbol g-\tilde {\boldsymbol g}\|_{\boldsymbol H^{1/2}(\Gamma)}.
	\end{align} 
	Finally, from the Sobolev embedding \(L^1(\Omega) \hookrightarrow H^{-2}(\Omega)\) for $d=2$, we obtain
	\begin{align}\label{5.26}
		\|\boldsymbol u-\boldsymbol v\|^2_{\boldsymbol H^1(\Omega)}&\lesssim \|\nabla \times (- \nu \Delta \boldsymbol{v} + (\boldsymbol{\beta} \cdot \nabla) \boldsymbol{v} + \sigma \boldsymbol{v}- \boldsymbol { f})\|^2_{{L}^{1}(\Omega)} + \|\boldsymbol v-\boldsymbol g\|^2_{\boldsymbol H^{1/2}(\Gamma)}\notag\\
		&\lesssim\|\nabla \times \R(\boldsymbol v)\|^2_{L^{1}(\Omega)}+ \|\boldsymbol v-\boldsymbol g\|^2_{\boldsymbol H^{1/2}(\Gamma)},
	\end{align}
	where $\R(\boldsymbol v) =  - \nu \Delta \boldsymbol{v} + (\boldsymbol{\beta} \cdot \nabla) \boldsymbol{v} + \sigma \boldsymbol{v}- \boldsymbol { f}$. From the norm equivalence of the continuous and discrete $L^{1}(\Omega)$-norm (see~\cite{BVPS}), we get the estimate
	\begin{align*}
		\|\nabla\times\R(\boldsymbol v) \|_{{L}^{1}(\Omega)} \leq  \|\nabla\times\R(\boldsymbol v)\|^*_{{L} ^{1}(\Omega)} +\|\nabla \times \R(\boldsymbol v) \|_{\B} \tilde{m}^{-\frac{s}{d}}. 
	\end{align*}
	Similarly, from~\cite[Theorem~6.4]{BVPS} the norm corresponding to the boundary term satisfies the estimate
	\[ \|\boldsymbol v-\boldsymbol g\|_{\boldsymbol{H}^{1/2}(\Gamma)} \leq \|\boldsymbol v-\boldsymbol g\|^*_{\boldsymbol{H}^{1/2}(\Gamma)} + \|\boldsymbol v-\boldsymbol g\|_{\operatorname{Tr}(\overline{\B})} \bar{m}^{-\frac{\bar{s}-1}{d-1}}. \]
	Collecting the above estimates and combining them with the stability bound~\eqref{5.26} yields
	\begin{align}
		\|\boldsymbol u- \boldsymbol v\|^2_{\boldsymbol{H}^1(\Omega)} &\leq  \|\nabla \times (- \nu \Delta \boldsymbol{v} + (\boldsymbol{\beta} \cdot \nabla) \boldsymbol{v} + \sigma \boldsymbol{v}- \boldsymbol { f})\|^{2,*}_{{L}^{1}(\Omega)} + \|\boldsymbol v-\boldsymbol g\|^{2,*}_{\boldsymbol H^{1/2}(\Gamma)}\notag\\
		&+  \|\nabla \times (- \nu \Delta \boldsymbol{v} + (\boldsymbol{\beta} \cdot \nabla) \boldsymbol{v} + \sigma \boldsymbol{v}- \boldsymbol { f})\|^{2}_{\B}\tilde{m}^{-\frac{2s}{d}} + \|\boldsymbol v-\boldsymbol g\|^{2}_{\operatorname{Tr}(\overline{\B})} \bar{m}^{-\frac{2(\bar{s}-1)}{d-1}}\notag\\
		&\lesssim \L^*_{\PR} + (\|\boldsymbol v\|_{\U}^2+1 )\mathfrak{R}_{\PR}(\tilde m, \bar{m}),
	\end{align}
	where the argument is completed by using the bound $\|\boldsymbol f\|_{\B}\leq1$ and $\|\boldsymbol g\|_{\operatorname{Tr}(\overline{\B})}\leq 1$, which concludes the proof for error bound of velocity approximation. Next we will prove, error bound for pressure approximation. From stability estimate~\eqref{8.7}, we get
	\begin{align*}
		\|p-q\|_{L^2(\Omega)} \lesssim \|\boldsymbol{f}- \hat {\boldsymbol{f}}\|_{\boldsymbol{H}^{-1}(\Omega)} + \|\boldsymbol{u}-\boldsymbol{v}\|_{\boldsymbol{H}^1(\Omega)}.
	\end{align*}
	Since $\hat{\boldsymbol{f}}:= \bar{\boldsymbol{f}} + \left(- \nu \Delta {\boldsymbol{u}}+ (\boldsymbol{\beta} \cdot \nabla){\boldsymbol{u}} + \sigma {\boldsymbol{u}}\right)$ as given in Section~\ref{presssure_recovery}. Then from the previous estimate this implies
	\begin{align}
		\|p-q\|^2_{L^2(\Omega)} &\lesssim \|\boldsymbol{f}- \bar{\boldsymbol{f}} - \left(	- \nu \Delta {\boldsymbol{u}}+ (\boldsymbol{\beta} \cdot \nabla){\boldsymbol{u}} + \sigma {\boldsymbol{u}}\right)\|^2_{\boldsymbol{H}^{-1}(\Omega)} + \|\boldsymbol{u}-\boldsymbol{v}\|^2_{\boldsymbol{H}^1(\Omega)}\notag\\
		&\lesssim \|\nabla q - \bar{\boldsymbol{f}}\|^2_{\boldsymbol{H}^{-1}(\Omega)} + \|\boldsymbol{u}-\boldsymbol{v}\|^2_{\boldsymbol{H}^1(\Omega)}.
	\end{align}
	Using the Sobolev embedding $L^\tau(\Omega)\hookrightarrow H^{-1}(\Omega)$ for $\tau >1$, and the norm equivalence of continuous and discrete $L^{\tau}$-norms, given in~\cite[Lemma~6.1]{BVPS}, it follows the estimate
	\begin{align}
		\|p-q\|^2_{L^2(\Omega)} &\lesssim  \|\nabla q - \bar{\boldsymbol{f}}\|^2_{\boldsymbol{L}^{\tau}(\Omega)} + \|\boldsymbol{u}-\boldsymbol{v}\|^2_{\boldsymbol{H}^1(\Omega)}\notag\\
		&\lesssim  \|\nabla q - \bar{\boldsymbol{f}}\|^{2,*}_{\boldsymbol{L}^{\tau}(\Omega)} + \|\nabla q - \bar{\boldsymbol{f}}\|^{2}_{\B}\tilde{m}^{-s/d} + \L^*_{\PR} + (\|\boldsymbol v\|_{\U}^2+1 )\mathfrak{R}_{\PR}(\tilde m, \bar{m}),\notag\\			
		&\lesssim \L^*_{p}(q)+ \L^*_{\PR}(\boldsymbol{ v}) + (\|(\boldsymbol v,p)\|_{\mathscr P}^2+1 )\mathfrak{R}_{\PR}(\tilde m, \bar{m}). 
	\end{align}
	This completes the proof for the pressure error estimate and hence concludes the proof.
\end{proof}

%

\section{Numerical experiments}\label{Sec_6}
In this section, we present numerical experiments to show the practical ability of the proposed CPINNs and pressure-robust CPINNs formulations for the stationary Oseen equations. The experiments are designed to validate the theoretical results and to demonstrate the accuracy and stability of the proposed methods. We consider benchmark examples with known analytical solutions and construct the corresponding forcing and boundary data accordingly.
\subsection{Example 1}
In this example we consider the stationary Oseen problem on the unit square $\Omega = (0,1)^2$ with constant coefficients $\nu = 1,\, \beta = (1,1)^T$, and $\sigma = 1$. We construct a smooth divergence-free velocity field by a smooth function
\begin{align}
	\psi(x,y) = \sin^2(x)\sin(y)\cos(y)\quad \text{in}\ \Omega,
\end{align}
and define the velocity field as
\begin{align*}
	u = (u_1,u_2) = \left(\frac{\partial \psi}{\partial y}, -\frac{\partial \psi}{\partial x}\right).
\end{align*}
We choose the pressure field $p(x,y) = \sin(\pi x)\cos(\pi y)$. Both the velocity and pressure are analytic in $\Omega$. The forcing term $f$ and boundary data $g$ are defined from the Oseen equation~\eqref{1}. We solve this example using three different neural-network formulations. 

In the standard PINNs formulation, both the velocity and pressure are approximated simultaneously by neural networks $(u_{\theta}, p_{\theta})$. The loss functional includes the momentum residual, the divergence constraint, and the boundary mismatch term. Since the pressure gradient appears explicitly in the momentum equation, any error in the pressure approximation may directly affect the velocity accuracy. Although this approach is consistent and convergent for smooth solutions, it is not pressure-robust, which means that strong or oscillatory pressure gradients can degrade the velocity approximation.
In the divergence-free formulation, the velocity approximation is restricted to the solenoidal space through a divergence-free network architecture. The incompressibility condition is thus satisfied exactly, and the loss contains only the momentum residual and boundary term. This improves structural consistency and typically reduces velocity error compared to the standard PINNs method. However, since the pressure still appears in the residual, the formulation remains sensitive to pressure gradients and is therefore not fully pressure-robust.

In case of pressure-robust formulation, the velocity is learned independently of the pressure variable by penalizing only the curl of the momentum residual together with the boundary mismatch. Consequently, the velocity depends solely on the divergence-free component of the forcing term. The pressure is then recovered in a post-processing step mentioned in Section~\ref{presssure_recovery}. For smooth test cases, all three approaches converge as the collocation density increases; however, the pressure-robust formulation yields the smallest velocity error in the $H^1(\Omega)$-norm, while the divergence-free method provides intermediate improvement over the standard PINNs approach. The corresponding errors over different numbers of collocation points are reported in Table~\ref{table_1}, Table~\ref{table_2} and Table~\ref{table_3} respectively.

\begin{table}[H]
	\centering
	\footnotesize
	\renewcommand{\arraystretch}{1.2}
	\begin{tabular}{|c|c c c c|}
		\hline
		
		& \multicolumn{4}{c|}{Primal formulation} \\
		\cline{2-5}
		Grid 
		& Velocity & Pressure &  $\|\nabla\cdot\boldsymbol{u}\|^*_{L^{\infty}}$  & Loss \\
		Points 
		& Error (\%) & Error (\%) & $(10^{-2})$ &  $(10^{-4})$\\
		
		$N$ & $\mathscr{P}$ / $\mathscr{C}$ & $\mathscr{P}$ / $\mathscr{C}$ & $\mathscr{P}$ / $\mathscr{C}$ & $\L_{}$ / $\L_{}^*$ \\
		\hline
		$5$  & 9.11 / 1.91 & 29.26 / 4.83 & 3.63 / 1.38 & 4.66 / 1.55 \\
		$10$ & 4.34 / 2.54 & 8.69 / 7.47 & 0.94 / 0.14 & 2.65 / 2.76 \\
		$15$ & 4.36 / 1.46& 9.55 / 4.55 & 0.94 / 0.14 & 3.26 / 2.57 \\
		$20$ & 3.34 / 1.50 & 8.64 / 3.49 & 0.24 / 0.10 & 3.69 / 2.78 \\
		$25$ & 3.12 / 1.41 & 8.37 / 3.21 & 0.27 / 0.12 & 2.67 / 2.12 \\
		$30$ & 3.28 / 1.22 & 8.82 / 3.12 & 0.16 / 0.16 & 3.14 / 2.98 \\
		\hline
	\end{tabular}
	\caption{Error comparison of solutions obtained using CPINNs ($\mathscr{C}$) and standard PINNs ($\mathscr{P}$) for primal formulation.}
	\label{table_1}
\end{table}

\begin{table}[H]
	\centering
	\footnotesize
	\renewcommand{\arraystretch}{1.2}
	\begin{tabular}{|c|c c c c|}
		\hline
		
		& \multicolumn{4}{c|}{Div-free formulation} \\
		\cline{2-5}
		Grid 
		& Velocity & Pressure &  $\|\nabla\cdot\boldsymbol{u}\|^*_{L^{\infty}}$  & Loss \\
		Points 
		& Error (\%) & Error (\%) & $(10^{-7})$ &  $(10^{-3})$\\
		
		$N$ & $\mathscr{P}$ / $\mathscr{C}$ & $\mathscr{P}$ / $\mathscr{C}$ & $\mathscr{P}$ / $\mathscr{C}$ & $\L_{}$ / $\L_{}^*$ \\
		\hline
		$5$  & 12.50 / 4.78 & 33.80 / 16.90 & 10.5 / 10.1 & 9.34 / 1.02 \\
		$10$ & 15.74 / 4.90 & 33.39 / 13.16 & 8.34 / 7.15 & 3.18 / 1.69 \\
		$15$ & 8.38 / 4.73 & 20.12 / 10.34 & 7.74 / 6.25 & 1.11 / 2.06 \\
		$20$ & 12.73 / 3.83 & 25.33 / 10.39 & 5.66 / 5.66 & 2.10 / 2.49 \\
		$25$ & 10.23 / 3.23 & 18.12 / 8.32 & 5.12 / 4.34 & 2.38 / 1.41 \\
		$30$ & 10.29 / 3.22 & 18.34 / 8.11 & 4.45 / 4.09 & 2.19 / 1.87 \\
		\hline
	\end{tabular}
	\caption{Error comparison of solutions obtained using CPINNs ($\mathscr{C}$) and standard PINNs ($\mathscr{P}$) for divergence free formulation.}
	\label{table_2}
\end{table}

\begin{table}[H]
	\centering
	\footnotesize
	\renewcommand{\arraystretch}{1.2}
	\begin{tabular}{|c|c c c c c|}
		\hline
		
		& \multicolumn{5}{c|}{Pressure-robust formulation} \\
		\cline{2-6}
		Grid 
		& Velocity &  $\|\nabla\cdot\boldsymbol{u}\|^*_{L^{\infty}}$  &Velocity  & Pressure & Pressure  \\
		Points 
		& Error (\%)  & $(10^{-7})$ & Loss $(10^{-4})$ & Error (\%) &  Loss $(10^{-3})$\\
		
		$N$ & $\mathscr{P}$ / $\mathscr{C}$ & $\mathscr{P}$ / $\mathscr{C}$ & $\L_{}$ / $\L_{}^*$ & $\mathscr{P}$ / $\mathscr{C}$ & $\L_{}$ / $\L_{}^*$\\
		\hline
		$5$  & 11.37 / 3.59  & 7.74 / 7.74 & 4.66 / 1.55 & 21.35 / 7.80 & 2.36 / 1.09 \\
		$10$ & 5.51 / 2.66 & 9.53 / 8.34 & 2.65 / 2.76 & 6.94 / 6.35 & 2.95 / 2.24 \\
		$15$ & 4.14 / 1.77  & 8.34 / 6.55 & 3.26 / 2.57 & 6.53 / 5.62 & 2.33 / 1.72 \\
		$20$ & 4.67 / 1.57  & 8.29 / 6.78 & 3.69 / 2.78 & 4.97 / 2.23 & 1.70 / 1.22 \\
		$25$ & 3.58 / 1.34  & 7.22 / 6.43 & 2.67 / 2.12 & 4.66 / 2.18 & 1.89 / 1.53 \\
		$30$ & 3.92 / 1.16  & 6.75 / 1.03 & 3.14 / 2.98 & 3.82 / 1.90 & 1.11 / 1.12 \\
		\hline
	\end{tabular}
	\caption{Error comparison of solutions obtained using CPINNs ($\mathscr{C}$) and standard PINNs ($\mathscr{P}$) for pressure-robust formulation.}
	\label{table_3}
\end{table}

%
%
%
%
%
%
%

\subsection{Example 2} 
We consider a no-flow benchmark problem adapted from the classical example of~\cite{john2017divergence}
in order to examine the proposed pressure-robust CPINNs formulation for the stationary Oseen system. Let $\Omega = (0,1)^2$ and consider the Oseen equation~\eqref{1}. We choose constant coefficients $\nu = 1$, $\sigma = 0$ and $\beta = (1,1)^T$. The forcing term is defined by 
\begin{align*}
f(x,y) = (0, \operatorname{Ra}(1-y+3y^2)), \quad \operatorname{Ra}>0.
\end{align*}
Define the pressure field $$p=  \operatorname{Ra}\left(y^3-\frac{y^2}{2}+y-\frac{7}{12}\right).$$
A direct computation shows that $\nabla p=f$, and therefore the exact velocity satisfies $\boldsymbol u=0$. Consequently, for all values of the Rayleigh parameter $\operatorname{Ra}$, the velocity remains identically zero, while the pressure scales linearly with $\operatorname{Ra}$. This example highlights the fundamental invariance property of incompressible flows, which modifies the forcing term by a gradient field that affects only the pressure, and the velocity remains unchanged. 

To assess the numerical behavior, we compute solutions for $\operatorname{Ra} = 1, 10^2, 10^4, 10^6$, and measure the quantity $\|\nabla \boldsymbol u_{NN}\|_{\boldsymbol L^2(\Omega)}$. Since the exact velocity vanishes, this norm directly represents the velocity error. For non-pressure robust formulations, gradient forcing typically contaminates the discrete velocity, leading to errors proportional to the magnitude of the pressure, i.e., $\|\nabla \boldsymbol u_{NN}\|_{\boldsymbol L^2(\Omega)}\sim \operatorname{Ra}$, as observed in~\cite{john2017divergence}. In contrast, the proposed pressure-robust CPINNs formulation is derived from a consistent residual norm that mimics the continuous stability estimate of the Oseen system (derived in Section~\ref{Sec_5}), and therefore preserves the separation between divergence-free and irrotational components of the forcing. This example thus provides a strong test case of the proposed pressure-robust CPINNs formulation and demonstrates the accuracy over the standard formulation.

\begin{figure}[H]
\centering
\caption{Convergence plots for the solution using PINNs (left) and CPINNs (right) for general formulation.}
\label{fig1}
\subfloat{\includegraphics[width=0.48\textwidth]{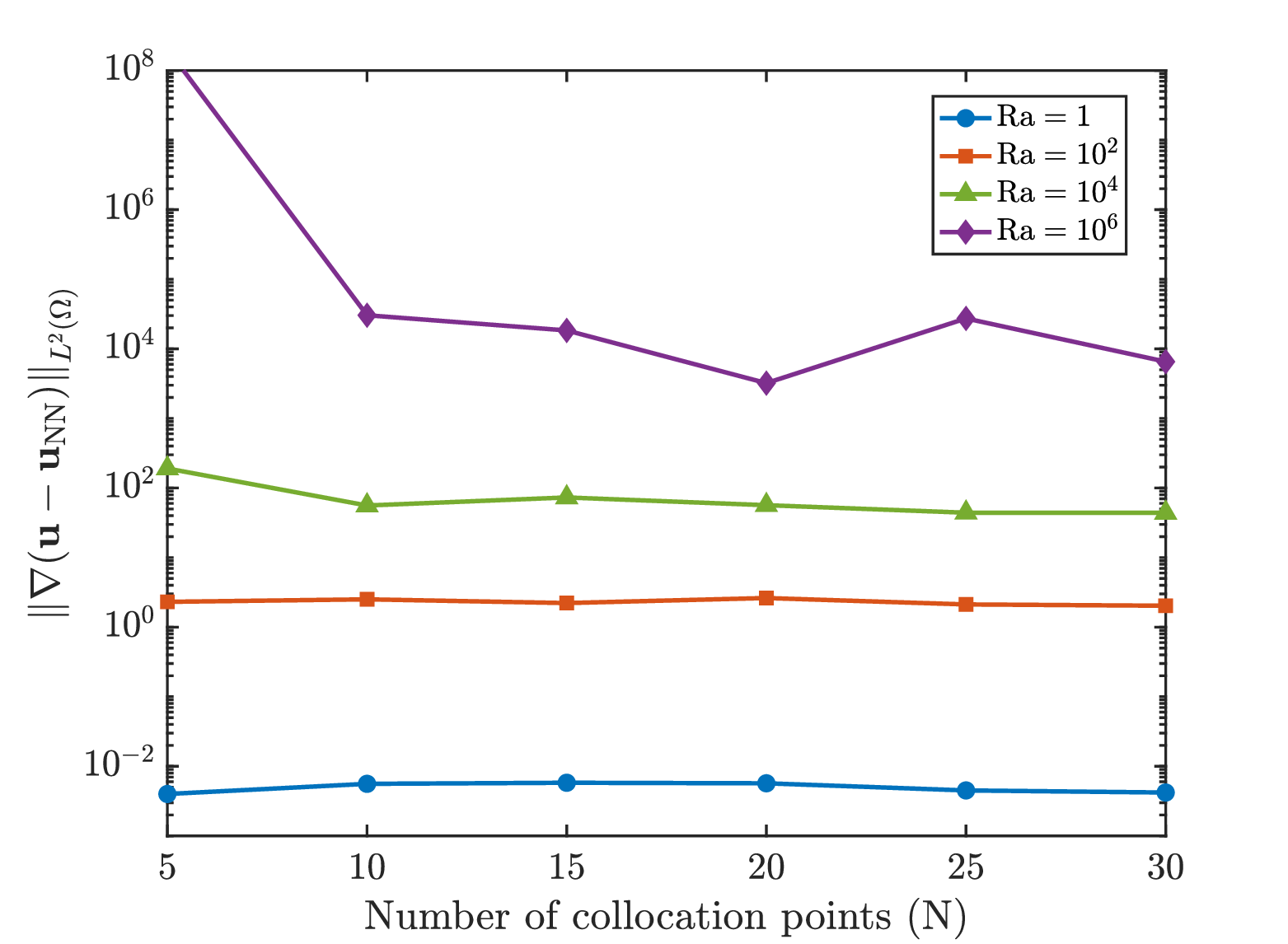}}
\hspace{0.3cm}
\subfloat{\includegraphics[width=0.48\textwidth]{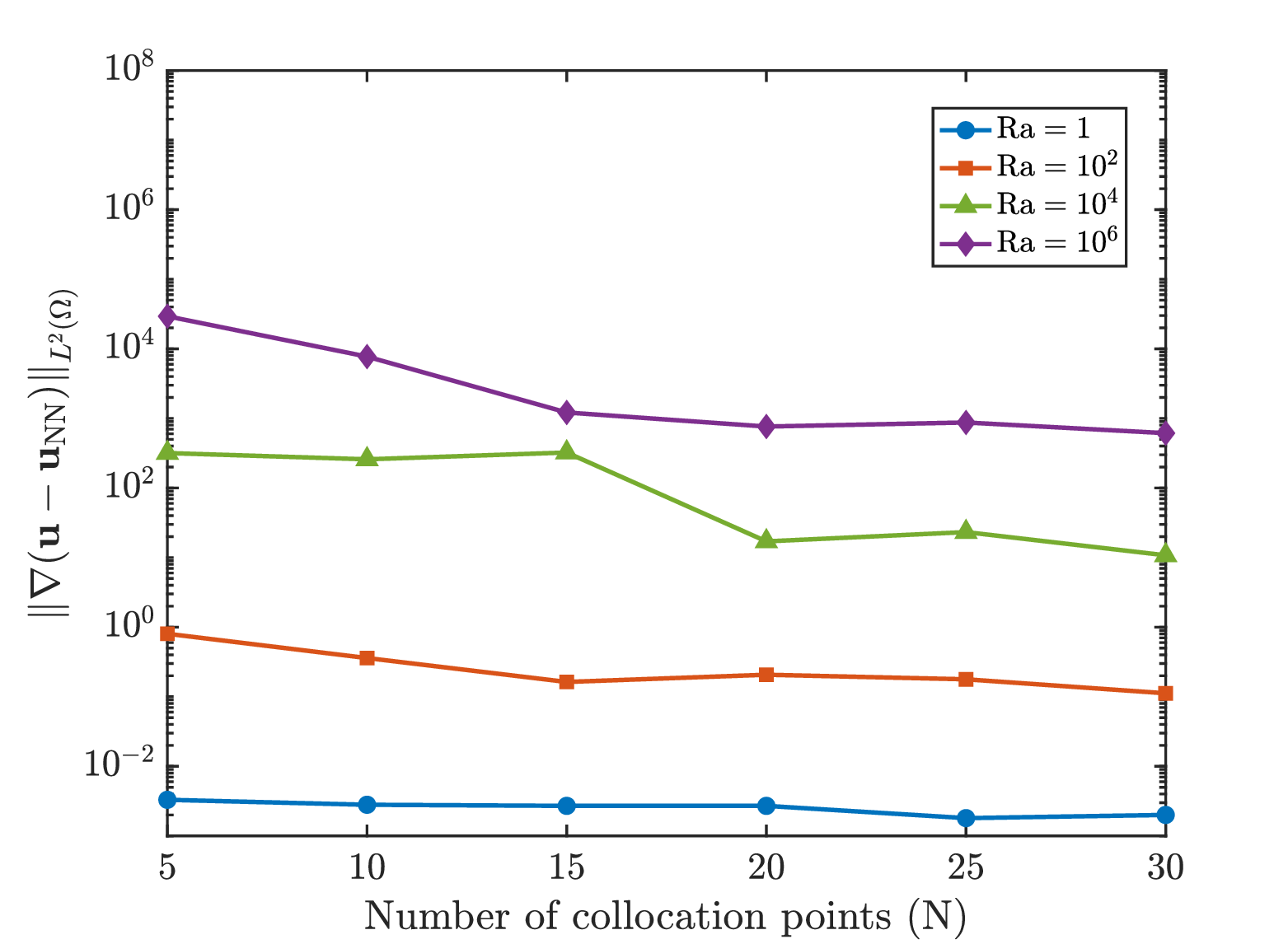}}
\end{figure}

\begin{figure}[H]
\centering
\caption{Convergence plots for the solution using PINNs (left) and CPINNs (right) for div-free formulation.}
\label{fig2}
\subfloat{\includegraphics[width=0.48\textwidth]{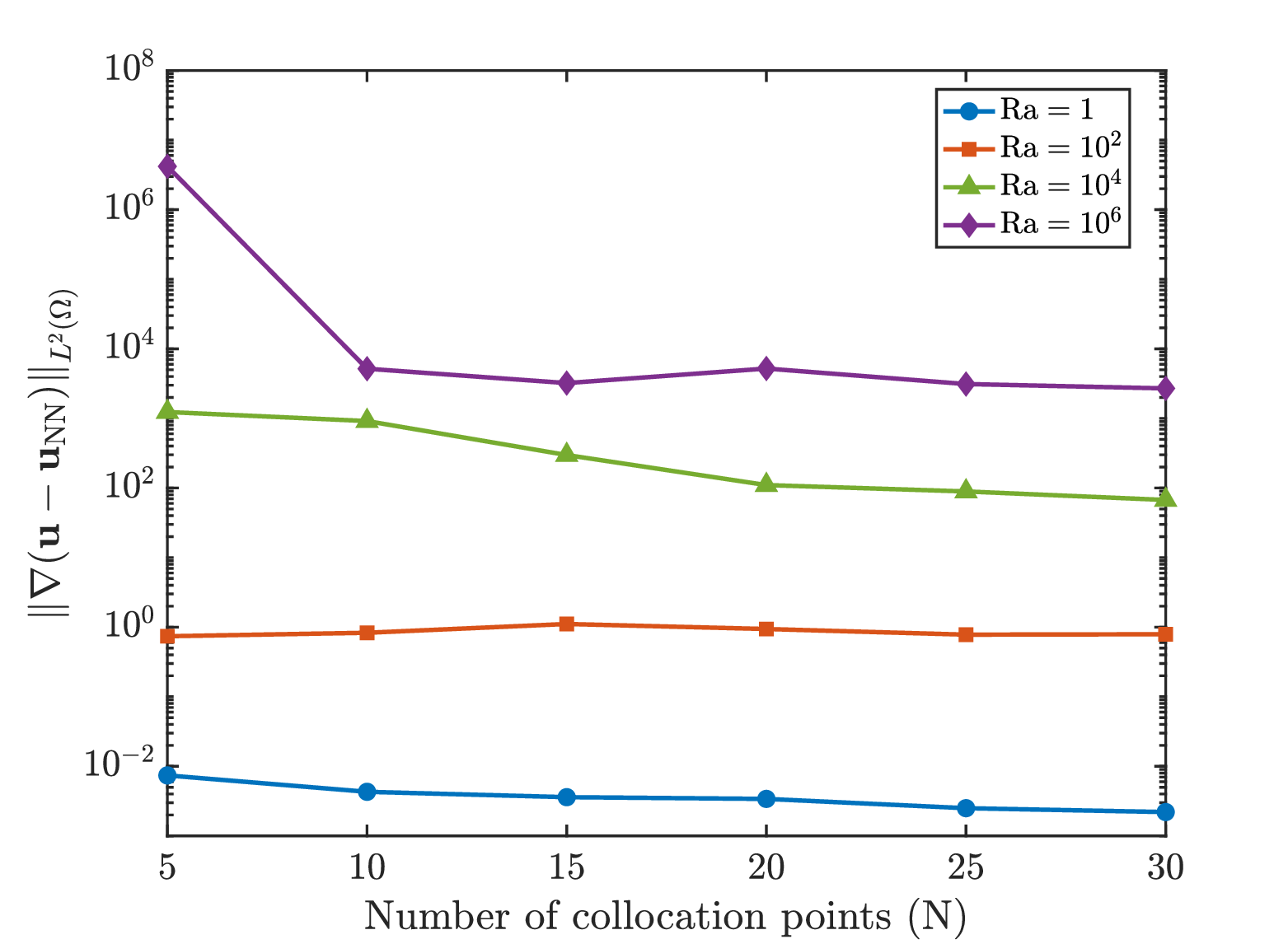}}
\hspace{0.3cm}
\subfloat{\includegraphics[width=0.48\textwidth]{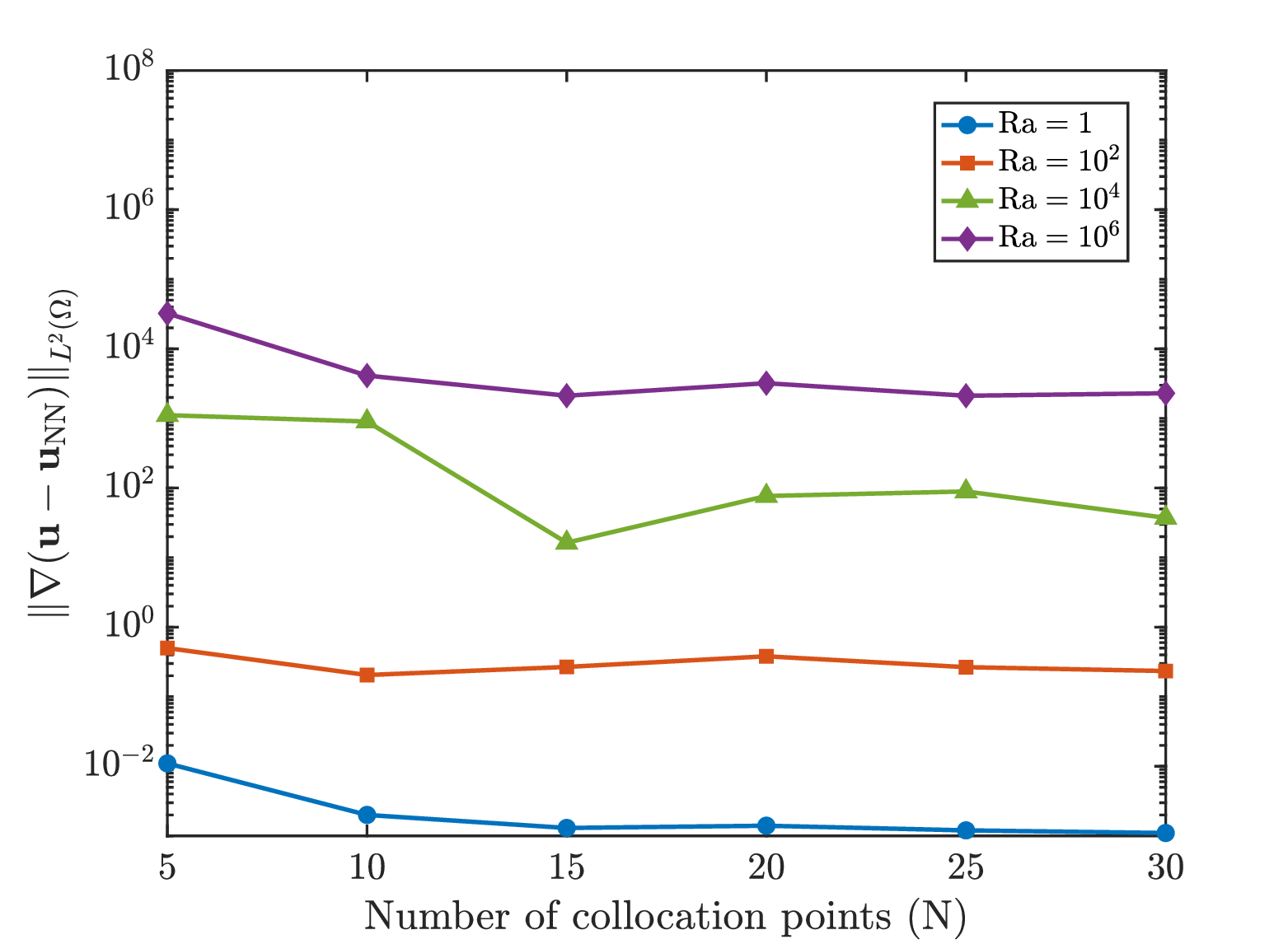}}
\end{figure}

\begin{figure}[H]
\centering
\caption{Convergence plots for the solution using PINNs (left) and CPINNs (right) for div-free pressure-robust formulation.}
\label{fig3}
\subfloat{\includegraphics[width=0.48\textwidth]{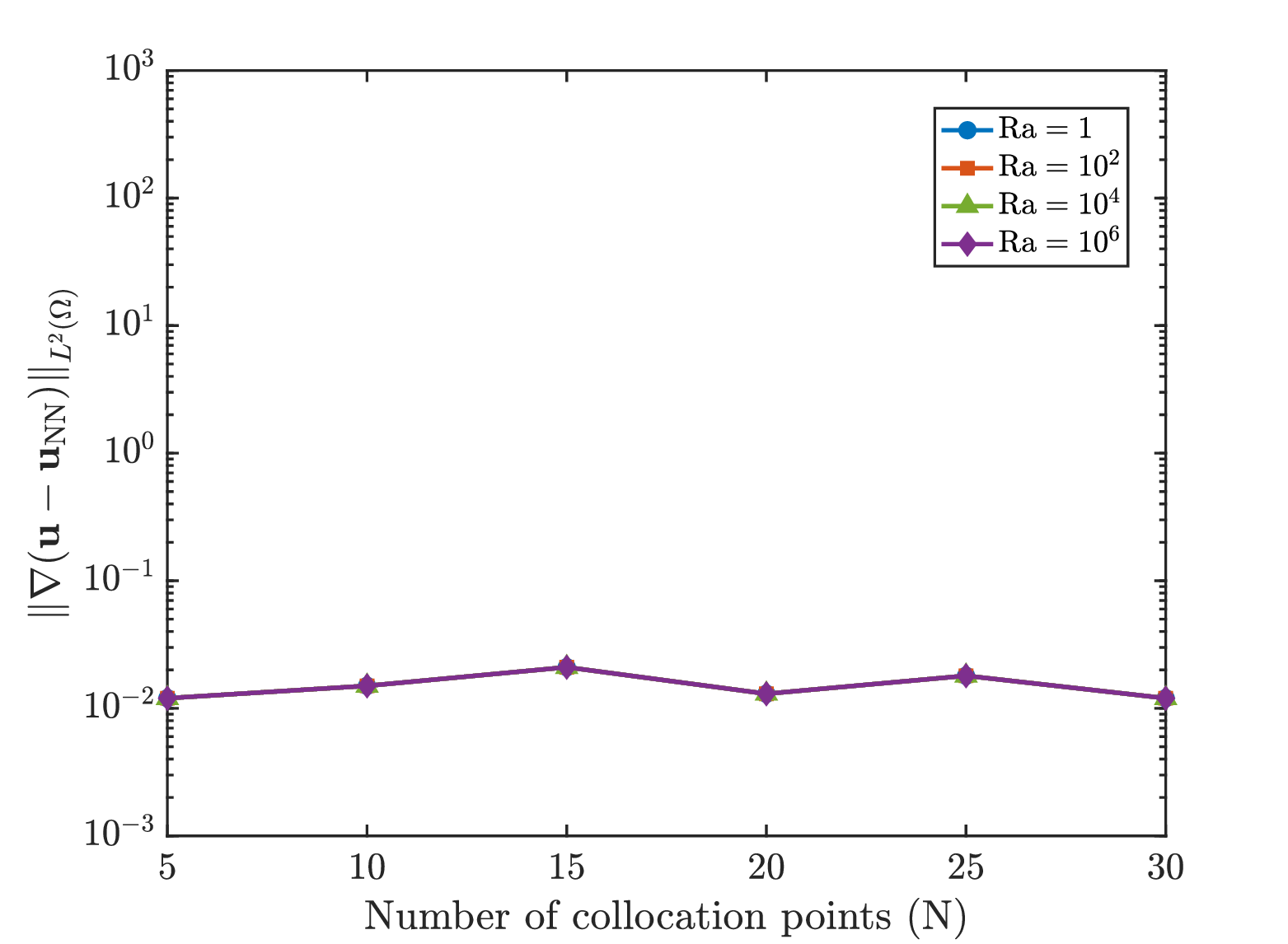}}
\hspace{0.3cm}
\subfloat{\includegraphics[width=0.48\textwidth]{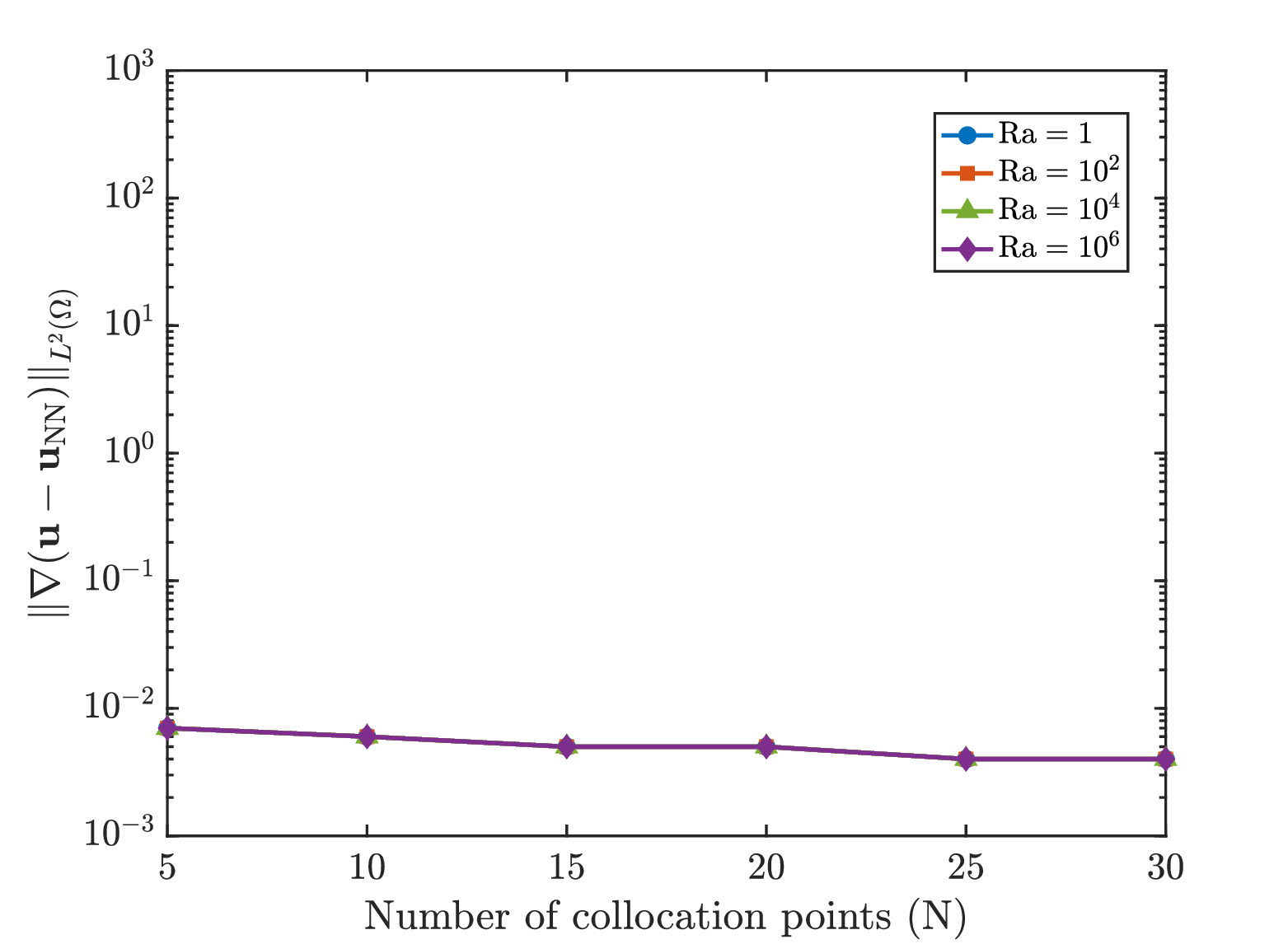}}
\end{figure}

\begin{figure}[H]
\centering
\caption{Convergence plots for the solution using PINNs (left) and CPINNs (right) for pressure-robust formulation.}
\label{fig4}
\subfloat{\includegraphics[width=0.48\textwidth]{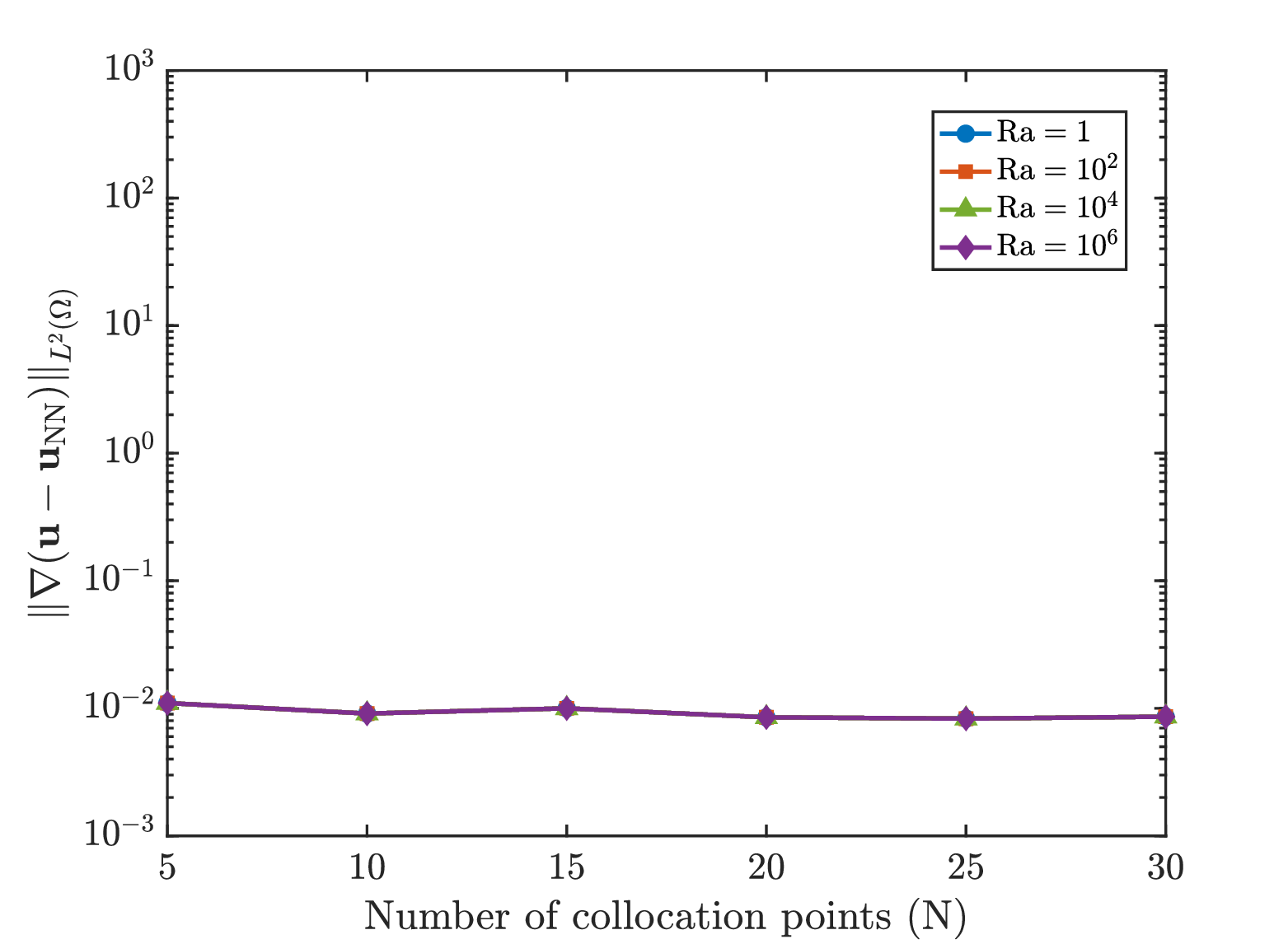}}
\hspace{0.3cm}
\subfloat{\includegraphics[width=0.48\textwidth]{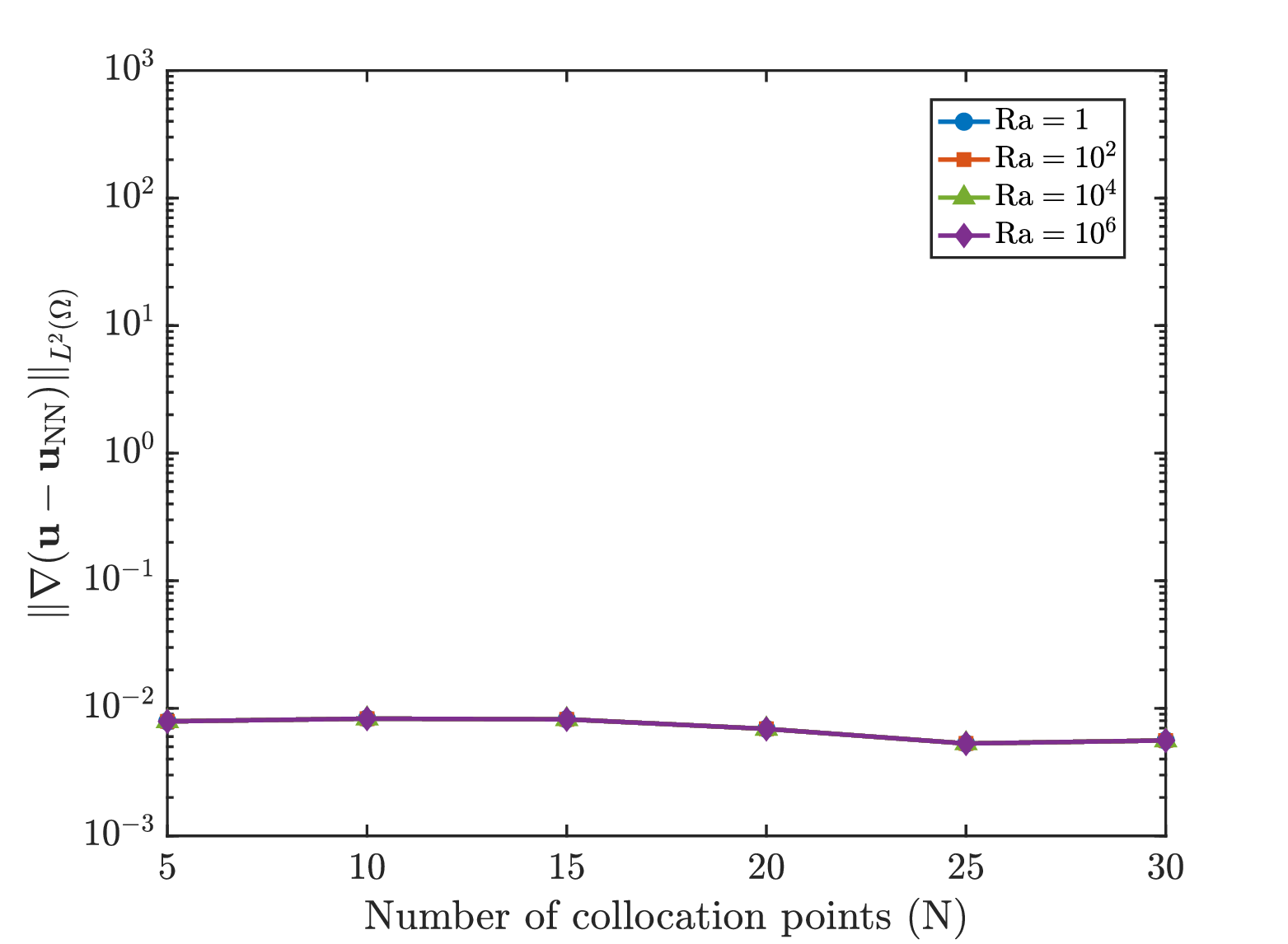}}
\end{figure}

\section{Conclusion}
In this paper, we developed a rigorous framework for PINN approximations of the stationary Oseen equations through the formulation of CPINNs that are derived directly from the stability properties of the continuous problem. By constructing loss functionals that are consistent with the natural functional norms of the PDE, the proposed CPINNs framework provides a principled and theoretically justified approach for learning incompressible flow solutions from sampled data. Within this setting, we propose for the first time both standard CPINNs formulations and a novel pressure-robust CPINNs formulation that removes the influence of gradient forces from the velocity approximation, ensuring that the velocity error depends only on the divergence-free component of the forcing. Using optimal recovery theory under suitable Besov regularity assumptions, we established optimal recovery rates for the velocity in $\boldsymbol H^1(\Omega)$ and the pressure in $L^2(\Omega)$, thereby providing rigorous guarantees for the accuracy of the proposed methods. The resulting framework proposed the ideas for standard CPINNs and pressure-robust CPINNs with theoretical support. Finally, numerical experiments demonstrate the stability, accuracy, and effectiveness of both CPINNs and pressure-robust CPINNs for incompressible flow problems.

\appendix
\section{Proof of Theorem~\ref{thm4.1}}\label{App_A}
	We explain the complete proof here to extend the scalar-valued proof given in \cite{BVPS} to the vector-valued settings. First, we prove the upper bound for $\boldsymbol{f}$ and then the proof will conclude with a lower bound.

\textbf{Proof of the upper bound.} Let us consider the sampling sites $\X$, and we define a vector-valued recovery operator by acting componentwise over the functions $\boldsymbol{f}$ belonging to the unit ball of Besov space $\F$ as
\begin{align}
	\boldsymbol{S}^*(\boldsymbol{f}) = (S^*(f_1), S^*(f_2), \ldots, S^*(f_d)).
\end{align}
Clearly, $\boldsymbol{S}^*(\boldsymbol{f})$ depends only on the data $\{\boldsymbol{f}(x_i)\}_{i=1}^{\tilde m}$, so $\boldsymbol{S}$ is an admissible recovery operator for the vector-valued problem. From the definition of norm $\boldsymbol{H}^{-1}(\Omega)$ and from the scalar bounds from \cite[Theorem~3.1]{BVPS}, we get
\begin{align}\label{normeq}
	\|\boldsymbol{f} - \boldsymbol{S}^*(\boldsymbol{f})\|_{\boldsymbol{H}^{-1}(\Omega)}
	&=
	\Bigl( \sum_{j=1}^d \|f_j - S^*(f_j)\|_{H^{-1}(\Omega)}^2 \Bigr)^{1/2} \\ \notag
	&\lesssim 
	\Bigl( \sum_{j=1}^d C^2\,\tilde m^{-2\alpha_{-1}} \Bigr)^{1/2}
	\lesssim \tilde m^{-\alpha_{-1}}.
\end{align}
Similarly for the case of $d=2$ and $p\leq 1$, we get
\[
\|\boldsymbol{f} -  \boldsymbol{S}^*(f)\|_{ \boldsymbol{H}^{-1}(\Omega)}
\lesssim \tilde m^{-\alpha_{-1}} \log(\tilde m).
\]
Here, the implicit constants depend on $d$ and are independent of $\tilde m$. Taking the supremum over $\boldsymbol{f}\in \F$ and the infimum over all choices of $\X$ and $\boldsymbol{S}$ we get upper bound estimate for $d\ge 3$ or $d=2$, $p>1$ as
\[
R^*_{\tilde m}(\F, \boldsymbol{H}^{-1}(\Omega))
\lesssim \tilde m^{-\alpha_{-1}},
\]
and when $d=2$, $0<p\le 1$, the upper bound is given by 
\[
R^*_{\tilde m}(\F, \boldsymbol{H}^{-1}(\Omega))
\lesssim \tilde m^{-\alpha_{-1}} \log(\tilde m).
\]

\textbf{Proof of the lower bound.}
We will now establish the lower bound for $R^*(\F)_X$. Given arbitrary data sites $\{x_i\}_{i=1}^{\tilde m}$, we construct a function $\boldsymbol{\eta}\in\F$, which satisfies
\begin{align}\label{4.12}
	\boldsymbol{\eta}(x_i) = 0,\quad i = 1,\ldots,\tilde m, \text{ and } \|\boldsymbol{\eta}\|_X\geq c\tilde m^{-\alpha},
\end{align}
where $\alpha$ is the exponent corresponding to $X$. Since both the functions $\boldsymbol{\eta}$ and the zero function satisfy the zero data, we get the lower bound for $R^*(\F)$ from \eqref{4.12}.

In order to construct $\boldsymbol{\eta}$, we define a non-negative, smooth function $\boldsymbol{\varphi}$ on $\mathbb{R}^d$, with support over $\Omega$ and satisfies
\begin{align}\label{4.13}
	\|\varphi_i\|_{H^{-1}(\Omega)} = 1, \quad \varphi_i(x)\geq 1/2,\ \text{for } x\in\Omega_0:=[1/4,3/4]^d,\quad i = 1,2,\ldots d,
\end{align}
and assume $\varphi_i(x)= \phi_i(x_1)\cdots\phi_i(x_d)$ with a univariate function $\phi_i$ holding all these properties. Among all such functions choose $\varphi_i = \varphi_{i,s,p,q}$ with minimal Besov norm
\begin{align}
	\|\varphi_i\|_{B^s_{pq}(\Omega)} := M_{i,s,p,q}.
\end{align}
For any cube $I\subset\Omega$ with smallest vertex $\xi_{I}$ and sidelength $l_I$, we define the rescaled bump function
\begin{align}\label{4.15}
	\boldsymbol{\varphi}_I(x):= l_I^{s-d/p}\boldsymbol{\varphi}(l_I^{-1}(x-\xi_I)),
\end{align} 
which is supported in $I$ and vanishes on its boundary. A direct scaling computation shows $\|\varphi_{i,I}\|_{B^s_{pq} (\Omega)}= M_{i,s,p,q}$ for all cubes and for all $i=1,\ldots ,d$.

We assume $m=2^{kd}$, and for the given data site $\X_{\tilde m}=\{\boldsymbol x_1,\ldots, \boldsymbol x_{\tilde m}\}\subset\Omega$, consider the tensor-product grid $G_{k+2,2}$ with mesh size $2^{-(k+2)}$. Thus, the grid contains $4^d\tilde m$ cubes, so atleast $(4^d-1)\tilde m$ of them contain no data point in their interior. Let $\I = \I(\X_{\tilde m})$ denote the family of such cubes. Then
\begin{align}
	|\I|\geq (4^d-1)\tilde m,\quad |I|\simeq2^{-d(k+2)}\geq c \tilde m^{-1},\ \forall I\in\I.
\end{align}
For each empty cube $I\in \I$ define the normalized bump function $\boldsymbol{\eta}_I := M^{-1}\boldsymbol{\varphi}_I$. Then we get each $\boldsymbol{\eta}_I\in\F$ and for all data sites $\boldsymbol{\eta}_I(x_i)=0$. These functions $\boldsymbol{\eta}_I$ used to derive the corresponding lower bounds for $R^*(\F)_X$ for target space $X$. 

For $X = \boldsymbol{H}^{-1}(\Omega)$ the argument splits depending on whether $p\leq \gamma$ or $p\geq \gamma$, where $\gamma$ is given in theorem.

\textbf{Case $p\leq \gamma$ :} For any non-empty cube $I\in \I$, we define $\boldsymbol{\eta} = \boldsymbol{\eta} _I$. From the scaling of $\boldsymbol{\varphi}_I$, for every $i=1,\ldots,d$, we  have 
\begin{align}
	\eta_i(x) = M^{-1}\varphi_{i,I}\geq M^{-1}l_{I}^{s-d/p}\geq c\tilde m^{-\frac{s}{d}+\frac{1}{p}},\ x\in I_{0}:= \left[\xi_I-\frac{l_I}{4}, \xi_I+\frac{l_I}{4}\right]^d,
\end{align}
where $I_0$ is the subcube of $I$ corresponding to $\Omega_0$ with $|I_0|\geq c\tilde m^{-1}$. In order to estimate $\|\boldsymbol{\eta}\|_{\boldsymbol{H}^{-1}(\Omega)}$, we define a function $\boldsymbol{v}\in \boldsymbol{H}^1_0(\Omega)$, with support over $I$, $\|v_i\|_{H^1_{0}(\Omega)} =1$ for $i=1,\ldots ,d$ and satisfy 
\begin{align}
	v_i(x )= c\,l_I^{1-d/2}\varphi_i(l_I^{-1}(x-\xi_I))\geq c\,l_I^{1-d/2}\geq c\tilde m^{-\frac{1}{d}+\frac{1}{2}} \quad x\in I_0.
\end{align}
Hence, the corresponding estimate satisfies 
\begin{align}
	\|\boldsymbol{\eta}\|_{\boldsymbol{H}^{-1}(\Omega)}&\geq \left[\sum_{i=1}^{d}\left(\int_{\Omega}^{}\eta_i\, v_i\,dx\right)^2\right]^{\frac{1}{2}} =  \left[\sum_{i=1}^{d}\left(\int_{I_0}^{} \eta_i\, v_i\,dx\right)^2\right]^{\frac{1}{2}}\\ \notag &\gtrsim \,  \tilde m^{-\frac{s}{d}+\frac{1}{p}}\tilde m^{-\frac{1}{d}+\frac{1}{2}}\,|I_0|\gtrsim \tilde m^{-\frac{s}{d}+\frac{1}{p}-\frac{1}{\gamma}}.
\end{align}
Thus the argument $R^*_m(\F)\gtrsim\,\tilde m^{-\alpha_{-1}}$ holds for $p\leq \gamma$.

\textbf{Case $p\geq \gamma$ :} Since $\F = U(\boldsymbol B^s_{\infty,q}(\Omega))\subset U(\boldsymbol B^s_{pq}(\Omega))$, We restrict our attention to the case $p=\infty$. Define the global bump function
\begin{align}
	\boldsymbol{\eta} = \kappa\sum_{I\in\I}^{}\boldsymbol{\eta}_I
\end{align}
where $\kappa$ is a constant such that $\boldsymbol{\eta}\in \F$. Each $\eta_{i,I}\geq 2^{-ks}$ on its corresponding inner cube for $\tilde m= 2^{kd}$. Let us define a vector-valued function $\boldsymbol{v}(x)=c\,\boldsymbol{\varphi}(x)$ such that $v_i\geq c>0$ on $\Omega_0$ and $\|v_i\|_{H^1_0(\Omega)} = 1$ for all $i=1,\ldots,d$. Define $\I_0 = \{I\in\I: I\subset \Omega_0\}$, where the grid inside $\Omega_0$ has $2^d\tilde m$ cubes and $|\I_0|\geq (2^d-1)\tilde m$. Thus, the following estimate holds
\begin{align}
	\|\boldsymbol{\eta}\|_{\boldsymbol{H}^{-1}(\Omega)}&\geq  \left[\sum_{i=1}^{d}\left(\int_{\Omega}^{}\eta_i\, v_i\,dx\right)^2\right]^{\frac{1}{2}}  \geq  \left[\sum_{i=1}^{d}\left(\int_{I_0}^{} \eta_i\, v_i\,dx\right)^2\right]^{\frac{1}{2}} \\\notag
	&\geq  \left[\sum_{i=1}^{d}\left(\int_{\Omega_0}^{}\eta_i\,dx\right)^2\right]^{\frac{1}{2}} \geq \left[\sum_{i=1}^{d}\left(\kappa\sum_{I\in\I_0}^{}\int_{I}^{}\eta_{i,I}\,dx\right)^2\right]^{\frac{1}{2}} \\\notag
	&\geq c\,2^{-ks}2^{-kd}(2^d-1)\tilde m\gtrsim \tilde m^{-s/d}.
\end{align}
Since $1/p-1/\gamma<0$, the exponent matches the exponent $\alpha_{-1}$ given in theorem which proves $R^*_{X}(\F)$ for the case $p>\gamma$. This establishes the lower bound required for $X = \boldsymbol H^{-1}(\Omega)$, and thus concludes the proof.

\section{Proof of Theorem~\ref{thm4.2}}\label{App_B}
	For any $\boldsymbol{g}\in\G$, we choose $\boldsymbol{v}\in \overline{\B}$ with $\operatorname{Tr}(\boldsymbol{v})=\boldsymbol{g}$. After applying the trace norm definition, we get
\begin{align}
	\|\boldsymbol{g}-\overline{\boldsymbol{S}}_k(\boldsymbol{g})\|_{\boldsymbol{H}^{1/2}(\Gamma)}\leq \|\boldsymbol{v}-\boldsymbol{S}_k\|_{\boldsymbol{H}^1(\Omega)}=\left[\sum_{i=1}^{d}\|v_i-S_{i,k}\|^2_{H^1(\Omega)}\right]^{\frac{1}{2}}.
\end{align}
For every scalar component $v_i\in \B_i$, the scalar version of the approximation result proved in~\cite[Theorem~3.1]{BVPS}, yields the upper bound
\begin{align}
	\|v_i-S_{i,k}\|_{H^1(\Omega)} \leq C(r2^k)^{-\bar s+1}.
\end{align}
where the constant $C$ is independent from $i$. Taking sum for $i=1,\ldots,d$ with $\bar{m}\asymp2^{k(d-1)} $ gives the bound
\begin{align}
	\|\boldsymbol{g}-\overline{\boldsymbol{S}}_k(\boldsymbol{g})\|_{\boldsymbol{H}^{1/2}(\Gamma)}\leq \left[\sum_{i=1}^{d}\|v_i-S_{i,k}\|^2_{H^1(\Omega)}\right]^{\frac{1}{2}}  \leq C(r2^k)^{-\bar s+1} \leq C\, \bar m^{-\beta}.
\end{align}
As this holds for every $\boldsymbol{g}\in \G,$ we get the desired upper bound of $R^*_{\bar{m}}(\G)$ for the vector-valued case.   

Next, we will prove the lower bound. We now fix $\bar m = 2^{k(d-1)}-1$  and consider any set of boundary sampling points $\Y = \{\boldsymbol{z}_1,\ldots,\boldsymbol{z}_{\bar{m}}\}$. We consider a boundary face $F = \{x\in\Omega: x_1=0\}$ and define a dyadic decomposition $\D_k(F)$ of  $F$. Since $\D_k(F)$ contains $2^{k(d-1)}$ cubes for $\bar m$ sample points, there exist a cube $\overline{J}\in\D_k(F)$ that contains no data site in its interior. Let $J\subset \Omega$ be the $d-$dimensional dyadic cube with $\overline{J}$ as a face. Let $\boldsymbol{\varphi}_J$ be the scaled bump function defined in \eqref{4.15} and for $M = M_{\bar s, \bar p, \bar q}$ set
\begin{align}
	\boldsymbol{v}(\boldsymbol{x}) := M^{-1}\boldsymbol{\varphi}_J (\boldsymbol{x}-(2^{-k-1},0,\ldots,0)), \quad \text{for } \boldsymbol{x}\in \Omega.
\end{align}
Then $\boldsymbol{v}\in U(\boldsymbol{B}^{\bar{s}}_{\bar{p}\bar{q}})$ and its boundary trace function $\boldsymbol{\eta} := \operatorname{Tr}(\boldsymbol{v})$ belongs to $\G$ and vanishes at $\Y$. Let $\boldsymbol{\varphi}$ be the bump function defined in \eqref{4.13} and $\boldsymbol{\eta}_0$ be its trace on the hyperplane $x_1=1/2$. Define $M_0:= |\boldsymbol{\eta}_0|_{\boldsymbol H^{1/2}(\Gamma)}$ and trace function $\boldsymbol{\eta}$ as a translated and rescaled function of $\boldsymbol{\eta}_0$. Thus, for $\bar m\asymp 2^{k(d-1)}$, a change of variables in function $\boldsymbol{\eta}$ yields the estimate
\begin{align}
	|\boldsymbol{\eta}|_{\boldsymbol{H}^{1/2}(\Gamma)} =\left[ \sum_{i=1}^{d}|\eta_i|^2_{H^{1/2}(\Gamma)}\right]^{\frac{1}{2}}
	=M_0M^{-1}2^{-k(s-d/p)} 2^{k/2} 2^{-k(d-1)/2}\gtrsim \bar m^{-\beta},
\end{align}
for a positive constant independent of $\bar m$. Since the data sites were arbitrary, this gives the lower bound $R^*_{\bar m}(\G)_{\boldsymbol{H}^{1/2}(\Gamma)} \geq c\,\bar m^{-\beta}$ and thus completes the proof.

\bibliographystyle{siamplain}
\bibliography{reference_2n}
\end{document}